\documentclass[12pt]{article}
\usepackage{a4wide}
\usepackage{graphicx}
\usepackage{amsmath}
\usepackage{amssymb}
\usepackage{amsthm}
\usepackage{enumerate}

\newtheorem{theorem}{Theorem}[section]
\newtheorem{definition}[theorem]{Definition}

\newtheorem{lemma}[theorem]{Lemma}
\newtheorem{claim}[theorem]{Claim}
\newtheorem{corollary}[theorem]{Corollary}
\newtheorem{conjecture}[theorem]{Conjecture}

\newtheorem{problem}[theorem]{Problem}

\newcommand{\comment}[1]{}
\newcommand{\hyper}{\mathcal}
\newcommand{\N}{\mathbb{N}}

\begin{document}

\title{Edge disjoint Hamiltonian cycles in highly connected tournaments}

\author{\large{Alexey Pokrovskiy} 
\\
\\ Methods for Discrete Structures,
\\ {Freie Universit\"at,} 
\\ Berlin, Germany.
\\ {Email: \texttt{alja123@gmail.com}}
\\ 
\\ \small Keywords: Hamiltonian cycles, connectivity of tournaments, linkage structures.}

\maketitle

\begin{abstract}
Thomassen conjectured that there is a function $f(k)$ such that every strongly $f(k)$-connected tournament contains $k$ edge-disjoint Hamiltonian cycles. This conjecture was recently proved by K\"uhn, Lapinskas, Osthus, and Patel who showed that $f(k)\leq O(k^2(\log k)^2)$ and conjectured that there is a constant $C$ such that $f(k)\leq Ck^2$. We prove this conjecture.
\end{abstract}

\section{Introduction}
A directed graph is Hamiltonian if there is a directed cycle passing through all its vertices. Hamiltonicity has a very long history in both directed and undirected graphs, and there are many results guaranteeing that a graph is Hamiltonian under certain conditions (see~\cite{BJSurvey, KOSurvey}). 

In general, it is hard to decide whether a directed graph is Hamiltonian---the problem is well known to be NP complete, even for undirected graphs. However for the special case of tournaments the problem becomes easier (a tournament is a directed graph which has exactly one edge between any pair of vertices). Here, an old result of Camion~\cite{Camion} says that a tournament is Hamiltonian if, and only if, it is strongly connected i.e. for any two vertices $x$ and $y$ there is a directed path from $x$ to $y$. Since strong-connectedness can be tested in polynomial time, this gives an efficient algorithm for testing whether a tournament is Hamiltonian.

Many results about Hamiltonicity have focused on finding several Hamiltonian cycles. Often one wants to count how many different Hamiltonian cycles there are, or to pack several edge-disjoint Hamiltonian cycles in a graph (see~\cite{KOSurvey}).
One natural condition for finding edge-disjoint Hamiltonian cycles in a tournament is \emph{strong $k$-connectedness}.
A directed graph is strongly $k$-connected if it remains strongly connected after the removal of any set of  $(k-1)$-vertices. 
Thomassen made the following conjecture about finding edge disjoint Hamiltonian cycles in a highly connected tournament.
\begin{conjecture}[Thomassen, \cite{ThomassenHamiltonian}]\label{ThomassenConjecture}
There is a  function $f(k)$ such that every strongly $f(k)$-connected tournament contains $k$ edge-disjoint Hamiltonian cycles.
\end{conjecture}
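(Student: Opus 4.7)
The plan is to prove the stronger quantitative form of Thomassen's conjecture, namely $f(k) \leq C k^2$, by an iterative construction that produces the $k$ edge-disjoint Hamilton cycles one at a time. The strategy is built around a ``linkage structure'' $L \subseteq V(T)$ that is set aside at the outset and reused, with careful bookkeeping, to complete almost-spanning substructures in the leftover digraph into genuine Hamilton cycles.

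The first step is to use the strong $C k^2$-connectivity of $T$ to extract a small $L$ with a robust, reusable linkage property: given any collection of $O(k)$ disjoint pairs of vertices in $V(T) \setminus L$, one can find internally vertex-disjoint paths through $L$ linking these pairs, \emph{and} this can be done $k$ times in succession using edge-disjoint parts of $L$. Such structures should exist in sufficiently connected tournaments because in a strongly $t$-connected tournament, arbitrary linkages exist between specified pairs of endpoints provided $t$ is large compared to the number of pairs. The $k^2$ in the connectivity arises because $L$ must service $k$ Hamilton cycles, each consuming $O(k)$ of its linking capacity.

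The main iteration builds $C_i$ as follows. In $T_i := T \setminus (C_1 \cup \cdots \cup C_{i-1})$, the minimum in- and out-degrees are still close to $n/2$, because only $i-1 \leq k-1$ Hamilton cycles have been removed; using analogues of Camion's theorem for semi-complete digraphs with large minimum semi-degree, one finds a short collection of $O(k)$ vertex-disjoint paths in $T_i \setminus L$ covering $V(T) \setminus L$. The linkage property of $L$ is then invoked to splice these paths, through the $i$-th ``slice'' of $L$, into a single Hamilton cycle, while keeping the slices edge-disjoint across iterations.

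The hardest part is constructing $L$ with enough \emph{reusable} robustness to serve all $k$ iterations at a total connectivity cost of only $O(k^2)$. A single use of a linkage structure in a tournament is relatively well understood; the novelty needed for the $C k^2$ bound is an efficient, shareable absorber whose cost scales linearly (not super-linearly) in both the number of pairs linked and the number of reuses. Further subtleties: $T_i$ is no longer a tournament for $i \geq 2$, so tournament-specific tools must be replaced by semi-complete analogues; each gluing must respect the cyclic ordering of the path cover to produce a single Hamilton cycle rather than a disjoint union of cycles; and the bookkeeping must guarantee that a later iteration never re-uses an edge spent in an earlier one.
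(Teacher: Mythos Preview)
Your plan is essentially the paper's approach: reserve $O(k^2)$ worth of linkage structure up front, and for each $i=1,\ldots,k$ cover the non-reserved vertices of $T_i$ by $O(k)$ disjoint paths, then use the $i$th portion of the reserved structure to splice these paths into a Hamilton cycle. Two implementation points where the paper differs from your sketch: the $O(k)$-path cover comes from the Gallai--Milgram theorem (minimum degree $\geq n-O(k)$ forces independence number $\leq O(k)$), not from a Camion-type argument; and the reserved structure is realised as $O(k^2)$ \emph{vertex-disjoint} bounded-degree ``linkers'' rather than a single set $L$ with edge-disjoint slices, which makes the edge-disjointness bookkeeping between rounds automatic and also ensures each linker absorbs its own vertices into the Hamilton cycle it builds.
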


From Camion's Theorem, we have $f(1)=1$. For all larger $k$, Conjecture~\ref{ThomassenConjecture} was proved by K\"uhn, Lapinskas, Osthus, and Patel.
\begin{theorem}[K\"uhn, Lapinskas, Osthus, and Patel, \cite{KLOP}]\label{HamiltonianTheoremKLOP}
There is a constant $C$ such that every strongly $Ck^2(\log k)^2$-connected tournament contains $k$ edge-disjoint Hamiltonian cycles.
\end{theorem}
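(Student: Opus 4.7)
The plan is to construct the $k$ edge-disjoint Hamiltonian cycles one at a time, maintaining enough auxiliary structure to still find each subsequent cycle. The main difficulty is that removing the edges of a Hamiltonian cycle destroys the tournament property, so Camion's theorem (and the high connectivity of the original tournament) cannot be invoked directly on what remains. The strategy is therefore to reserve, at the outset, enough ``routing infrastructure'' to handle all $k$ cycles before we touch anything else.

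First, I would extract $k$ edge-disjoint subdigraphs $L_1,\ldots,L_k \subseteq T$, each of which serves as a linkage/absorption reservoir for one Hamiltonian cycle. Each $L_i$ should (a) have minimum in-degree and out-degree at least order $k\log k$ at every vertex, and (b) satisfy a strong linkage property: given any list of source--sink pairs in the vertex set, one can find many internally disjoint paths through $L_i$ realising them. The existence of $k$ such edge-disjoint structures inside $T$ is precisely what forces the bound $Ck^2(\log k)^2$: each structure costs order $k\log k$ in semi-degree, and carving off $k$ of them while preserving the linkage property costs a further $\log k$ factor.

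With the reservoirs in hand, I build each Hamiltonian cycle $H_i$ in two stages. Let $T_i = T \setminus (E(H_1)\cup \cdots \cup E(H_{i-1}))$. After further removing $L_i$, I apply a degree-based argument (a flow or Hall-type argument, exploiting that $T_i\setminus L_i$ still has very high minimum semi-degree) to find a \emph{cycle factor} $F_i$, i.e.\ a spanning collection of vertex-disjoint directed cycles. Then I use $L_i$ to \emph{rotate and stitch} the cycles of $F_i$ into one Hamiltonian cycle: along each cycle of $F_i$ pick an edge, replace it by a short detour through $L_i$ entering another cycle, thereby reducing the number of components by one; iterate until only a single spanning cycle remains.

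The principal obstacle is controlling the merging step. The cycle factor may have $\Theta(n)$ components, so $L_i$ must supply up to $\Theta(n)$ internally edge-disjoint detours simultaneously, chosen without conflict and without destroying the spanning property along the way. Designing $L_i$ so that it remains usable after all these successive rotations is the technical heart of the argument, and the extra $(\log k)^2$ slack in the connectivity hypothesis is what gives enough room both to decouple the $k$ reservoirs from one another and to absorb the losses incurred by each merge inside a single $L_i$. Improving either the degree requirement on $L_i$ or the decoupling step is presumably where the conjectured $Ck^2$ bound would come from.
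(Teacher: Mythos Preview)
The paper does not itself prove Theorem~\ref{HamiltonianTheoremKLOP}; it is quoted from~\cite{KLOP}. However, the paper explicitly describes the common strategy behind both Theorem~\ref{HamiltonianTheoremKLOP} and the sharper Theorem~\ref{EdgeDisjointHamiltonianCycles}, and your proposal diverges from it in two substantive ways.

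First, the reservoirs in the paper (and in~\cite{KLOP}) are \emph{vertex}-disjoint ``linkage structures'' $L_1,\dots,L_{k^2}$, each of bounded size, not $k$ \emph{edge}-disjoint spanning subdigraphs of high semidegree. One needs roughly $k^2$ of them because the $i$th Hamiltonian cycle is assembled from $O(i)$ pieces, and each piece consumes one linkage structure; summing over $i\le k$ gives the quadratic term. The $(\log k)^2$ in~\cite{KLOP} then arises in the \emph{construction} of each linkage structure (via greedy dominating sets of logarithmic size), not from a degree requirement of order $k\log k$ on the reservoirs as you suggest.

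Second, and more importantly, the pieces to be stitched are obtained not from a cycle factor but from the Gallai--Milgram theorem (Corollary~\ref{GallaiMilgramCorollary}): after deleting the edges of $i-1$ Hamiltonian cycles, the tournament loses only $2(i-1)$ from its minimum degree, so the remaining digraph has independence number at most $2i-1$ and hence is covered by at most $2i-1$ vertex-disjoint \emph{paths}. This is the crucial step you are missing. Your cycle-factor approach genuinely produces up to $\Theta(n)$ components, and you correctly flag this as the ``principal obstacle''---but it is an obstacle the actual proof simply sidesteps. With only $O(k)$ paths to join per round, a bounded number of small linkage structures suffices, and the merging step becomes a finite sequence of controlled path-concatenations rather than $\Theta(n)$ simultaneous rotations through a reservoir of semidegree $k\log k$. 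As stated, your merging step is underspecified: $L_i$ has only $O(nk\log k)$ edges in total, and arranging $\Theta(n)$ internally edge-disjoint detours that also respect the evolving spanning structure is not obviously feasible with the parameters you give.
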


The $Ck^2(\log k)^2$ bound on the connectedness in the above theorem is close to best possible. Indeed K\"uhn, Lapinskas, Osthus, and Patel constructed tournaments which are strongly $(k-1)^2/4$-connected, but have no  $k$ edge-disjoint Hamiltonian cycles \cite{KLOP}. They conjectured that the $\log k$ factors in Theorem~\ref{HamiltonianTheoremKLOP} were unnecessary and a $Ck^2$ bound on the connectivity should suffice.
\begin{conjecture}[K\"uhn, Lapinskas, Osthus, and Patel, \cite{KLOP}] \label{HamiltonianConjectureKLOP}
There is a constant $C$ such that every strongly $Ck^2$-connected tournament contains $k$ edge-disjoint Hamiltonian Cycles.
\end{conjecture}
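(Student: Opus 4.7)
The plan is to use a single $O(k^2)$-vertex \emph{linkage structure} $L$ to close off $k$ iteratively built near-Hamilton paths into full edge-disjoint Hamilton cycles, so as to avoid paying a union-bound over $k$ rounds (which is where the $\log^2 k$ in Theorem~\ref{HamiltonianTheoremKLOP} comes from).

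Construction of $L$: carve out $L\subseteq V(T)$ of size $\Theta(k^2)$ organised so that each of its $O(k)$ ``boundary'' vertices carries $\Theta(k)$ reserved in- and out-neighbours inside $L$, and so that the internal structure of $L$ is strongly $\Omega(k)$-connected. Using Menger-type arguments specialised to tournaments, one would show that within such an $L$ one can realise any $O(k)$ prescribed endpoint pairs by internally disjoint directed paths, and moreover that $L$ admits $k$ \emph{edge-disjoint} rounds of such routings. Because a tournament on $\Theta(k^2)$ vertices is already dense, removing $L$ leaves $T\setminus L$ strongly $\Omega(k^2)$-connected.

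Finding the cycles: inside $T' = T\setminus L$, iteratively extract $k$ edge-disjoint Hamilton paths $P_1,\dots,P_k$. Since each Hamilton path drops the semi-degrees of $T'$ by only $1$ at each vertex, and $T'$ starts with high connectivity, a Camion-style argument plus a short inductive step on the residual subtournament suffices to produce $P_i$ once $P_1,\dots,P_{i-1}$ have been removed. Then, for each $i$, close $P_i$ into a Hamilton cycle of $T$ by using $L$ to route from the end of $P_i$ back to its start, absorbing all still-unused vertices of $L$ along the way. Thanks to the universality of $L$, these closures may be chosen pairwise edge-disjoint.

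The main obstacle is the construction of $L$. The probabilistic linkage constructions used in \cite{KLOP} are ``one-shot'': a union bound over $k$ rounds costs exactly the missing $\log k$ factor. To remove this I would build $L$ deterministically via a layered construction, setting aside private reservoirs of in- and out-edges at each boundary vertex and then using expansion inside $L$ to match them up in $k$ edge-disjoint ways. The crucial lemma to prove is a strengthened tournament linkage result: a single $\Theta(k^2)$-vertex gadget in any strongly $Ck^2$-connected tournament supports $k$ successive edge-disjoint routings, replacing the random partition of \cite{KLOP} by a structural one. Once this linkage lemma is in hand, the remainder is inductive bookkeeping of semi-degrees together with Camion's theorem on residual subtournaments.
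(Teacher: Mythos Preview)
Your plan has a genuine gap at its most load-bearing step. You write: ``inside $T' = T\setminus L$, iteratively extract $k$ edge-disjoint Hamilton paths $P_1,\dots,P_k$ \ldots\ a Camion-style argument plus a short inductive step on the residual subtournament suffices.'' But after removing the edges of $P_1,\dots,P_{i-1}$, the residual digraph is no longer a tournament, so Camion's theorem does not apply. High strong vertex-connectivity of $T'$ does not by itself survive edge-deletion in a useful form, and there is no off-the-shelf result saying that a tournament minus $i<k$ edge-disjoint Hamilton paths still contains a Hamilton path. In effect you are assuming something nearly as strong as the theorem you want to prove: producing $k$ edge-disjoint Hamilton paths in a highly connected tournament is already the hard part, and closing each one up through a gadget is comparatively easy.

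The paper sidesteps exactly this difficulty. It never tries to find a single Hamilton path in the residual digraph. Instead, after deleting the edges of previously built cycles (and of linkers reserved for later rounds), it applies the Gallai--Milgram theorem to cover the remaining vertices by $O(k)$ vertex-disjoint paths, and then spends $O(k)$ fresh vertex-disjoint linkers to stitch these paths into one Hamilton cycle. Doing this for each of the $k$ cycles costs $O(k^2)$ linkers in total, and the existence of that many disjoint linkers is what the $Ck^2$-connectivity buys. Your diagnosis of the $\log^2 k$ loss in \cite{KLOP} is also off: it does not come from a union bound over $k$ rounds, but from the internal construction of each linkage structure (greedy dominating sets of logarithmic size). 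The present paper removes it by building linkers deterministically with constant-size dominators, not by reusing one gadget $k$ times edge-disjointly.
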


 The main result of this paper is a proof of this conjecture.
\begin{theorem}\label{EdgeDisjointHamiltonianCycles}
There is a constant $C$ such that every strongly $Ck^2$-connected tournament contains $k$ edge-disjoint Hamiltonian Cycles.
\end{theorem}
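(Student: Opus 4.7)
The plan is to follow the general scheme behind Theorem~\ref{HamiltonianTheoremKLOP}—reserve a small linkage structure, find an approximate edge-disjoint Hamilton decomposition on the rest, and close each approximate structure into a genuine Hamilton cycle using the linkage—while tightening the bookkeeping so that no $\log k$ factor is lost at any stage.

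First, fixing a large constant $C$ and a strongly $Ck^2$-connected tournament $T$, I would choose a random set $R \subseteq V(T)$ of size $\Theta(k^2)$. A Chernoff plus union bound argument should show that, with positive probability, $T - R$ is still strongly $\Theta(k^2)$-connected, $T[R]$ is strongly $\Theta(k^2)$-connected, and every vertex of $T - R$ has $\Theta(k^2)$ in- and out-neighbours in $R$. Inside $R$ I would then build an \emph{absorbing gadget} $L$ with the following linkage property: for any family of up to $\Theta(k^2)$ disjoint endpoint pairs $(s_j, t_j)$ with $s_j, t_j \in V(T) \setminus L$, there exist edge-disjoint $s_j$-$t_j$ paths, each internally contained in $L$. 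Constructing such an $L$ with total size $\Theta(k^2)$, rather than $\Theta(k^2 (\log k)^2)$, is the first crucial step.

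Second, on $T - R$ I would apply an approximate edge-disjoint Hamilton decomposition result of K\"uhn--Osthus type (of the kind invoked in the proof of Theorem~\ref{HamiltonianTheoremKLOP}) to produce $k$ pairwise edge-disjoint spanning $1$-regular subdigraphs $F_1, \dots, F_k$ of $T - R$, possibly after first regularising degrees using a few edges incident to $R$. Each $F_i$ decomposes into vertex-disjoint directed cycles; writing $m_i$ for the number of such cycles, my target is $\sum_i m_i = O(k^2)$. For each $i$ I would then splice the cycles of $F_i$ together and absorb $R$ by routing roughly $m_i + |R|$ edge-disjoint paths through $L$, each path destroying one cycle-boundary and swallowing a few reservoir vertices. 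Since the $F_i$ are pairwise edge-disjoint on $T - R$ and the path requests are satisfied by the disjoint capacity of $L$, the resulting Hamilton cycles $H_1, \dots, H_k$ are pairwise edge-disjoint.

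The main obstacle will be the construction and use of the absorbing gadget $L$: its linkage capacity must scale linearly with its size. The KLOP proof incurs the extra $(\log k)^2$ factor precisely because it absorbs cycles in $O(\log k)$ iterative rounds, each round losing a constant fraction of the available connectivity. To avoid this I would try to perform all absorptions in a single pass, either by layering many parallel copies of the basic tournament linkage structure from~\cite{KLOP} inside $R$—one layer for each potential request—or by exhibiting a single structure of size $\Theta(k^2)$ that already supports $\Theta(k^2)$ edge-disjoint linkings. The delicate point, and the combinatorial heart of the paper, will be verifying that such a gadget exists inside any strongly $\Theta(k^2)$-connected tournament and that its guarantees are strong enough to serve arbitrary request patterns; once this is in place, the remainder of the argument should reduce to balancing the $\Theta(k^2)$ quantities so that the accounting closes.
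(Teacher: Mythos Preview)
Your high-level framework---reserve linkage structures, find $k$ edge-disjoint ``almost Hamiltonian'' objects on the rest, then close each one up through the linkage---matches both KLOP and this paper. But your proposed implementation of the middle step is off, and this is a genuine gap, not just a bookkeeping issue.

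You plan to invoke ``an approximate edge-disjoint Hamilton decomposition result of K\"uhn--Osthus type'' to obtain $k$ edge-disjoint spanning $1$-regular subdigraphs $F_i$ of $T-R$ with $\sum_i m_i = O(k^2)$. Neither KLOP nor the present paper does this, and for good reason: those decomposition theorems require the host digraph to be nearly regular (or to satisfy a robust expansion hypothesis of that flavour), whereas a $Ck^2$-connected tournament can have degree discrepancies of order $n$. There is no black box that hands you $k$ edge-disjoint $1$-factors here, and the bound on $\sum_i m_i$ is entirely unsupported. What both papers actually do is much more elementary: after deleting the edges of the previously built cycles and of the remaining linkage structures, the tournament still has minimum degree $n - O(k)$, so the Gallai--Milgram theorem (via independence number) gives a covering by $O(k)$ vertex-disjoint \emph{paths}. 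Summed over $k$ rounds this is $O(k^2)$ paths total, which is exactly the budget the linkage structures must absorb. No randomness, no regularisation, no $1$-factors.

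On the linkage side, your instinct that one must avoid the iterative $O(\log k)$-round absorption is correct, but the paper's solution is not a single gadget of size $\Theta(k^2)$ supporting $\Theta(k^2)$ edge-disjoint linkings. Instead it builds $\Theta(k^2)$ pairwise vertex-disjoint gadgets (``linkers''), each of bounded size and bounded maximum degree, and each one is consumed by exactly one linking request. The subtlety---and the real content of the paper---is that a linker cannot always route a requested $x$--$y$ path entirely inside itself: it may need to borrow up to six vertices that currently sit on other paths or inside other linkers. The recursive Definition~\ref{LinkingFamily} is engineered precisely so that these borrowed vertices can be reabsorbed into the affected paths while the remaining $k-1$ linkers still form a valid linking family. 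Your proposed ``many parallel copies of the basic KLOP structure'' would not obviously have this self-repairing property, and it is exactly here that KLOP's argument loses its $\log k$ factors.
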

This theorem is proved using the method of \emph{linkage structures in tournaments}. This technique was introduced in \cite{KLOP} during the proof of Theorem~\ref{HamiltonianTheoremKLOP}. Since then  the technique has found other applications in \cite{KOT, PokrovskiyLinking} to prove results about highly connected tournaments.
The following is an informal definition of what a linkage structure is
\begin{quote}
\emph{A linkage structure $L$ in a tournament $T$, is a small subset of $V(T)$ with the property that for many pairs of vertices $x,y$ outside $L$, there is a path from $x$ to $y$ most of whose vertices are contained in $L$.}
\end{quote}
This definition is purposefully vague in order to include all previously used linkage structures. Since linkage structures arose with specific applications in mind, the exact meaning of ``small,'' ``many,'' and ``most'' in the above definition varies depending on what application one is looking at.  In applications, one first proves an intermediate result which shows that every highly connected tournament contains many disjoint linkage structures. Then these linkage structures are used to build whatever object one is looking for in the tournament (in our case Hamiltonian cycles).

The structure of this paper is as follows. In the next section, we state what properties our linkage structures have, and use them to deduce Theorem~\ref{EdgeDisjointHamiltonianCycles}.
In Section~3, we define our linkage structures (which we call ``linkers'') and derive their properties.
Finally, in Section~4 we give some concluding remarks and open problems. 

\section{Finding Hamiltonian cycles using linkage structures}
A directed graph is Hamiltonian connected if for any pair of vertices $x$ and $y$, it contains a Hamiltonian path from $x$ to $y$.
The following is a version of a theorem of K\"uhn, Osthus, and Townend. It is perhaps the simplest example of linkage structures to state.
\begin{theorem}[K\"uhn, Osthus, and Townend, \cite{KOT}] \label{LinkageStructuresKOT}
All strongly $10^{16}k^3\log(k^2)$-connected tournaments contain $k$ vertex-disjoint sets $L_1, \dots, L_k$ such  that
\begin{itemize}
\item $|L_i|\leq |T|/100k$.
\item For any $S\subseteq T\setminus (L_1\cup \dots\cup L_k)$, the subtournament on $L_i\cup S$ is Hamiltonian connected for every $i$.
\end{itemize}
\end{theorem}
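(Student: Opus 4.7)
The plan is to reduce the theorem to a single-linker lemma and then iterate. The single-linker lemma one wants is: in any strongly $M$-connected tournament $T$ with $M \geq 10^{16}k^2 \log k$, there is a subset $L \subseteq V(T)$ of size at most $|T|/(200k)$ such that $T[L \cup S]$ is Hamiltonian connected for every $S \subseteq V(T)\setminus L$. Once this is in hand, the theorem follows quickly: find $L_1$ in $T$, remove it, note that $T \setminus L_1$ is still strongly $(M - |L_1|)$-connected and still much larger than $(1 - o(1))|T|$, so the lemma applies again to produce $L_2 \subseteq T \setminus L_1$, and so on for $k$ steps. The constant $200$ in the lemma (versus $100$ in the theorem) leaves slack so that the bound $|L_i| \le |T|/(100k)$ survives the shrinking of the ambient tournament.

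To build a single linker $L$, I would use the high connectivity to select a pair of \emph{port} vertices $(v^+, v^-)$ together with a gadget of \emph{absorbers} attached to them by many internally-disjoint paths. Classify every possible external vertex $s$ by its adjacency pattern to a small fixed template inside $L$ (its \emph{type}); there are only boundedly many types. The linker is required to contain, for every type $\tau$, a large family of vertex-disjoint absorbers of type $\tau$, each capable of splicing one extra vertex of type $\tau$ into a prescribed Hamiltonian path through the absorber. Given $x, y \in L \cup S$, a Hamiltonian $x$-$y$ path is then produced in three phases: walk from $x$ through the skeleton of $L$ to $v^+$; enter the absorption phase, in which every $s \in S$ is inserted through the next available absorber of its type (tournament edges ensure every $s$ has a type handled somewhere in $L$); and finally exit through $v^-$ to $y$, sweeping up any leftover vertices of $L$ along the way via a small ``cleanup'' structure built into the skeleton.

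The main obstacle is producing enough type-appropriate absorbers, pairwise vertex-disjoint, while keeping $|L|$ small. This is the source of the $10^{16}k^3\log(k^2)$ connectivity. One factor of $k$ is used up when the lemma is invoked $k$ times in the iteration; a second factor of $k$ appears because each linker must contain roughly $k$ absorbers per type to meet the worst-case demand from $|S|$ vertices of a single type; a third factor of $k$ comes from the internal connectivity each absorber consumes when one extracts it from $T$ via Menger-type disjoint-paths arguments; and the $\log k$ overhead is the standard loss from iteratively peeling off disjoint absorbing structures, where each extraction roughly halves a connectivity budget and one must ensure the budget is still positive after $\Theta(\log k)$ rounds.

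A cleaner way to organise the absorber extraction, and the one I would actually try to carry out, is the following: first prove that a strongly $M$-connected tournament contains a constant-sized ``absorbing gadget'' of each type, linked to any prescribed pair of ports by internally-disjoint paths; then show that the collection of all such gadgets can be made vertex-disjoint by greedily deleting the vertices used so far and appealing to the robustness of the connectivity under vertex deletion (each gadget costs a constant in connectivity, and we extract $\mathrm{poly}(k)$ of them). The Hamiltonian connectivity statement for $T[L \cup S]$ is then a local check: trace the three-phase path above and verify, absorber by absorber, that every vertex of $L \cup S$ is visited exactly once. The delicate point in this verification, and the one I expect to be the hardest to get tight, is the end-of-path cleanup that ensures no absorber is left partially used when the path reaches $y$; this is where the precise design of the template and the redundancy factor of $k$ in the number of absorbers per type pay off.
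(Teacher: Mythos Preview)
This theorem is not proved in the paper; it is quoted from \cite{KOT}. The paper's only remark on its proof is the sentence immediately following the statement: it is obtained by combining Theorem~1.5 of \cite{KOT} (which produces sets $L_i$ such that $T[L_i\cup S]$ is strongly $4$-connected for every $S$) with Thomassen's theorem that every strongly $4$-connected tournament is Hamiltonian connected. So the intended route is \emph{connectivity first, Hamiltonian connectivity as a black-box consequence}, not a direct construction of Hamiltonian paths.

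Your absorber scheme has a genuine gap. You propose to handle an arbitrary $S\subseteq T\setminus L$ by inserting each $s\in S$ through one of finitely many absorbers inside $L$, and you budget ``roughly $k$ absorbers per type''. But $S$ is not small: the theorem must hold for $S=T\setminus(L_1\cup\dots\cup L_k)$, which has size about $(1-1/100)|T|$. No finite family of absorbers contained in a set $L$ of size at most $|T|/(100k)$ can absorb $\Theta(|T|)$ external vertices one by one. The accounting that produced the factors of $k$ (``$k$ absorbers per type to meet the worst-case demand from $|S|$ vertices of a single type'') silently assumed $|S|=O(k)$, which is false here. This is not a matter of constants; the absorption paradigm is designed for a leftover set that is small relative to the host, whereas in this theorem the ``extra'' set $S$ is the bulk of the tournament and $L$ is the small gadget. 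The correct picture is the reverse of absorption: one engineers $L_i$ so that adding \emph{any} $S$ keeps $T[L_i\cup S]$ highly connected, and then appeals to Thomassen.
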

This theorem is obtained from combining Theorem~1.5 from~\cite{KOT} with a theorem of Thomassen that every strongly $4$-connected tournament is Hamiltonian connected~\cite{ThomassenHamiltonianConnected}.

Comparing this theorem with the informal definition of linkage structures given in the introduction, we see that for any pair of vertices $x,y$ outside of the linkage structures $L_1, \dots, L_k$, there is a path from $x$ to $y$, \emph{all} of whose internal vertices are contained in any one of the linkage structures $L_i$.

It is easy to see how Theorem~\ref{LinkageStructuresKOT} might be useful in proving results about Hamiltonicity of tournaments. Indeed suppose that we have sets $L_1, \dots, L_k$ as in Theorem~\ref{LinkageStructuresKOT}. Then for any partition of $T\setminus (L_1\cup \dots\cup L_k)$ into $k$ paths $P_1, \dots, P_k$, there is a Hamiltonian cycle in $T$ containing  $P_1, \dots, P_k$. Indeed this cycle is obtained by successively considering pairs of paths $P_i$ and $P_{i+1 \pmod{k}}$. If $x$ and $y$ are the start and end of $P_{i+1 \pmod{k}}$ and $P_i$ respectively, Theorem~\ref{LinkageStructuresKOT} implies that there is a Hamiltonian  path from $x$ to $y$ in $L_i+x+y$. This Hamiltonian path is used to join $P_i$ to $P_{i+1 \pmod{k}}$ using all the vertices of $L_i$. Repeating this for all $i=1, \dots, k$, produces the required Hamiltonian cycle.

The following is main idea of the proofs of Theorems~\ref{HamiltonianTheoremKLOP} and~\ref{EdgeDisjointHamiltonianCycles}. First we use a result similar to Theorem~\ref{LinkageStructuresKOT} to find many disjoint linkage structures in a highly connected tournament $T$. Then, we find $k$ collections of edge-disjoint paths, each collection partitioning the remaining vertices of $T$. Finally, using the linkage structures we join each collection of paths into a Hamiltonian cycle.

To find the collections of paths, we use a theorem of Gallai and Milgram.
The independence number of a directed graph is the order of the largest subset of vertices with no edges inside it.
\begin{theorem}[Gallai-Milgram, \cite{GM}]
Let $D$ be a directed graph with independence number~$k$. Then $V(D)$ can be covered by at most $k$ vertex disjoint paths.
\end{theorem}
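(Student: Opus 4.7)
My plan is to prove the theorem by induction on $n = |V(D)|$, with the base case $n \le 1$ immediate. For the inductive step, suppose $\alpha(D) = k$ and let $\mathcal{P} = \{P_1, \ldots, P_r\}$ be a path partition of $V(D)$ chosen to minimize $r$. Write $S = \{s_1, \ldots, s_r\}$ for the set of starting vertices of the paths. The conclusion reduces to proving $r \le k$, and for this it suffices to show that $S$ is an independent set of $D$, since then $r = |S| \le \alpha(D) = k$.

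To establish independence of $S$, I would argue by contradiction via a local swap. Suppose $s_i \to s_j$ is a directed edge with $s_i, s_j \in S$ and $i \ne j$. Detach $s_i$ from the head of $P_i$ and prepend it to $P_j$ along the edge $s_i \to s_j$, producing the new path $s_i \cdot P_j$. The modified collection $\{P_i \setminus \{s_i\},\ s_i \cdot P_j\} \cup \{P_\ell : \ell \notin \{i,j\}\}$ is a path partition of $D$: if $P_i$ was the singleton $\{s_i\}$, it has only $r-1$ paths, contradicting the minimality of $r$; otherwise it still has $r$ paths, but the new start set is $S' = (S \setminus \{s_j\}) \cup \{s_i'\}$, where $s_i'$ is the successor of $s_i$ in the original $P_i$.

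To close the contradiction in the second case, I would refine the choice of $\mathcal{P}$ by selecting it minimal also with respect to a secondary quantity on which the swap is strictly decreasing, for instance the lexicographic value of $S$ in a preselected linear order on $V(D)$ (e.g., one induced by a DFS of $D$). Each swap then lowers this secondary measure, contradicting extremality and forcing $S$ to be independent. An alternative route is to delete $s_j$ and apply the inductive hypothesis to $D - s_j$ (whose independence number is at most $k$), then splice $s_j$ back in at the start of a path beginning at an out-neighbor of $s_j$.

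The main obstacle in either route is exactly this tiebreaker: the obvious candidates for a monovariant, such as the sum of path lengths (which equals $n$) or the sum of depths of the starts, do not decrease under the swap, so identifying an appropriate well-founded measure (or, in the alternative route, guaranteeing that the splicing step does not push the path count above $k$ when $s_j$ is interior to some path) is where the combinatorial care is required. Once the correct extremal principle is in place, no edge can exist inside $S$, hence $|S| \le \alpha(D) = k$ and the bound $r \le k$ follows immediately.
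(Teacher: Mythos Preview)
The paper does not prove this theorem; it is quoted as a classical result of Gallai and Milgram with a citation to~\cite{GM}, so there is no argument in the paper to compare against. Let me instead comment on the soundness of your plan.

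Your primary route cannot be completed, because its target statement is false: it is \emph{not} true that some minimum path partition has an independent set of start vertices. Take $D$ on $\{a,b,c,d\}$ with arcs $a\to b$, $a\to c$, $c\to d$. Then $\alpha(D)=2$, and the only minimum path partitions are $\{ab,\,cd\}$ with $S=\{a,c\}$ and $\{acd,\,b\}$ with $S=\{a,b\}$; both start sets contain an arc out of $a$. Your swap applied to the first partition (using $a\to c$) produces the second, and the swap applied to the second (using $a\to b$) returns the first. So the swap genuinely cycles, and no secondary monovariant can strictly decrease along it. The difficulty you flagged is therefore not a matter of ``combinatorial care'' in choosing the right tiebreaker; the conclusion you are trying to force simply does not hold.

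Your alternative route---delete a start vertex, apply induction, then splice it back---is the one that actually works, but it needs a strengthened inductive hypothesis rather than the bare statement $\pi(D)\le\alpha(D)$. One standard strengthening is: for every path partition $\mathcal{P}$ there exists a path partition $\mathcal{P}'$ with $|\mathcal{P}'|\le|\mathcal{P}|$ together with an independent set meeting every path of $\mathcal{P}'$. The extra information carried through the induction is exactly what lets you reinsert the deleted vertex without increasing the path count; proving only $\pi\le\alpha$ for the smaller digraph gives you no handle on where the paths of the smaller cover start or end, which is precisely the obstruction you identified.
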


The degree of a vertex in a directed graph is the sum of its in and out-degrees. Notice that a directed graph with minimum degree $n-k$ must have independence number at most~$k$. Therefore the above theorem has the following corollary.
\begin{corollary}\label{GallaiMilgramCorollary}
Let $D$ be a directed graph with minimum degree $\geq n- k$. Then $V(D)$ can be covered by at most $k$ vertex disjoint paths.
\end{corollary}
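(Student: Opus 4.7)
The plan is to verify the one-line observation made in the preceding paragraph and then invoke the Gallai--Milgram theorem as a black box. Concretely, I will show that the minimum-degree hypothesis forces the independence number $\alpha(D)$ to be at most $k$, so that Gallai--Milgram immediately supplies a covering of $V(D)$ by at most $k$ vertex-disjoint paths.

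To check $\alpha(D)\leq k$, I would fix an arbitrary independent set $I\subseteq V(D)$ and any vertex $v\in I$. Because $I$ spans no edges of $D$, every edge of $D$ incident to $v$ has its other endpoint in $V(D)\setminus I$. The ambient setting in this paper is oriented (the directed graphs of interest are subgraphs of tournaments, so between any unordered pair of vertices there is at most one edge), which means each vertex of $V(D)\setminus I$ contributes at most $1$ to the in-plus-out degree of $v$. Combining this with the degree hypothesis yields
\[
 n-k \;\leq\; \deg(v) \;\leq\; |V(D)\setminus I| \;=\; n-|I|,
\]
whence $|I|\leq k$. As $I$ was arbitrary, $\alpha(D)\leq k$, and the Gallai--Milgram theorem then produces the required path cover of $V(D)$ by at most $k$ vertex-disjoint directed paths.

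I do not anticipate any genuine obstacle here: the corollary is essentially a one-step consequence of the theorem above, and the only point worth being careful about is that the conversion from ``minimum degree $\geq n-k$'' to ``independence number $\leq k$'' depends on the oriented-graph convention (otherwise a vertex outside $I$ could contribute up to $2$ to $\deg(v)$, which would only yield $|I|\leq (n+k)/2$). Since this convention is already in force throughout the paper, the argument is essentially immediate.
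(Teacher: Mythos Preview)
Your proposal is correct and follows exactly the route the paper takes: the paper's ``proof'' is precisely the sentence preceding the corollary, namely that minimum degree $\geq n-k$ forces independence number at most $k$, after which Gallai--Milgram applies directly. Your added remark about the oriented-graph convention is a valid caveat that the paper leaves implicit (and which is indeed satisfied in every application, since the digraphs $D_\ell$ in the proof of Theorem~\ref{EdgeDisjointHamiltonianCycles} are obtained from a tournament by deleting edges).
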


Repeatedly applying this corollary to a tournament $T$ produces collections of paths $\mathcal P_1, \dots, \mathcal P_k$ such that $\mathcal P_i$ consists of $2i-1$ vertex-disjoint paths which cover $V(T)$, and also for all $i\neq j$ the paths in $\mathcal P_i$ are edge-disjoint from those in $\mathcal P_j$. 
It is the paths in these collections which the linkage structures join into Hamiltonian cycles.
Assuming we need $2i-1$ linkage structures to join the $2i-1$ paths in $\mathcal P_i$ into a cycle, we would need $k^2$ linkage structures altogether. This is the source of the quadratic bound in Theorems~\ref{HamiltonianTheoremKLOP} and~\ref{EdgeDisjointHamiltonianCycles}.

Next, we formally define the properties of the linkage structures we use. We will actually define a family of several linkage structures which we call a \emph{linking family}.
\begin{definition}~\label{LinkingFamily}
For $k\geq 1$, a family $\{L_1, \dots, L_k\}$  of vertex disjoint subdigraphs of a digraph $D$ is a linking family of size $k$ in $D$ if the following holds.

 Suppose we have two vertices $x$ and $y$ outside $L_1\cup \dots\cup L_k$ and at most $100k$ vertex disjoint paths $P_1, \dots, P_m$  in $V(T)\setminus (L_1\cup\dots\cup L_k\cup \{x,y\})$.  Then there are vertex disjoint paths $P, P'_1, \dots, P'_m$ and subdigraphs $L'_1, \dots, L'_{k-1}$ such that
\begin{enumerate}[(i)]
\item $P$ is from $x$ to $y$.
\item $P\cup P'_1\cup \dots\cup P'_m\cup L'_1\cup \dots\cup L'_{k-1}$ consists of $L_1\cup \dots\cup L_k \cup P_1\cup \dots\cup P_m\cup \{x,y\}$, plus at most $6$ other vertices.
\item $P'_j$ has the same endpoints as $P_j$ for every $j$.
\item If $k\geq 2$, then $\{L'_1, \dots, L'_{k-1}\}$ is a linking family of size $k-1$ in $D$.
\end{enumerate}
\end{definition}
Part (iv) of this definition may look a bit strange since it seems to make the whole definition self-referential. However notice that the family $\{L_1, \dots, L_k\}$ has $k$ digraphs in it, whereas the family $\{L'_1, \dots, L'_{k-1}\}$ only has $k-1$. Therefore the definition is consistent since first we define a linking family of size $1$, then a linking family of size $2$ (using linking families of size $1$), then a linking family of size $3$ (using linking families of size $2$), etc. 

It is useful to compare a linking family of size $1$ to the informal definition of linkage structures in the introduction. Given a linking family $\{L\}$ of size $1$, we see that for any pair of vertices $x$, $y$ outside $L$, there is a $x$ -- $y$ path using only at most $6$ vertices outside of $L_i+x+y$. We have no control over where these extra vertices are, so they could potentially ruin the Hamiltonian cycle we are trying to build. The purpose of the paths $P_1, \dots, P_m$    is to allow us to ``protect'' certain paths from being broken by these extra $6$ vertices we might use when joining $x$ to $y$.
We remark that the paths ${P}_i$ are allowed to consist of just one vertex in the above lemma. In this case ${P}'_i={P}_i$ will hold since there is only one possible path beginning and ending at the same vertex. This phenomenon can be useful since it allows us to protect a small number of vertices $\{v_1, \dots, v_n\}$ from ever appearing in the paths ${P}, {P}'_1, \dots, {P}'_m$ or digraphs $L'_1, \dots, L'_{k-1}$ by letting ${P}_{r+1}=v_1, \dots, {P}_{r+n}=v_n$.

The following is the main technical result of this paper. It shows that every highly connected tournament contains a large linking family.
\begin{theorem} \label{LinkageStructures}
There are constants $C_1$ and $\Delta_1$ with the following property. 
Suppose that $T$ is a strongly $C_1k$-connected tournament $T$.
Then $T$ contains $k$ vertex-disjoint subdigraphs $L_1, \dots, L_k$ with maximum degree $\Delta_1$, such that for any spanning subdigraph $D\subseteq T$ with minimum degree at least $|T|-100\Delta_1 k$, any subfamily  $\hyper L\subseteq \{L_1, \dots, L_k\}$ is a linking family in $D\cup \hyper L$.
\end{theorem}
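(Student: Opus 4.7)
The plan is to construct the linkers $L_i$ using the high connectivity of $T$, and then verify the linking property by induction on the size of the family $\mathcal{L}$.

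For the construction, I would exploit the $C_1 k$-connectivity to iteratively carve out $k$ vertex-disjoint bounded-degree substructures. A natural atomic building block for a single linker is a pair of vertex-disjoint spanning paths (an ``in-spine'' and an ``out-spine'') joined by a bipartite matching-like gadget, with enough internal redundancy that $L_i$ admits a Hamiltonian-type path between any prescribed pair of ``portal'' vertices. To obtain $k$ disjoint copies with maximum degree bounded by a universal constant $\Delta_1$, I would repeatedly use the connectivity of $T$ to find many short vertex-disjoint paths between well-chosen boundary sets, prune aggressively to enforce bounded degree, and then strip the used vertices. Each iteration consumes only $O(1)$ of the connectivity, so the budget $C_1 k$ suffices for $k$ rounds. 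The output of this phase is $k$ vertex-disjoint bounded-degree linkers, each equipped with a collection of portal pairs usable for routing.

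For the linking property, I would induct on $k = |\mathcal{L}|$. Given vertices $x,y$ outside the linkers and at most $100k$ forbidden paths $P_1,\dots,P_m$ in $D$, pick one linker $L_i \in \mathcal{L}$; the goal is to build a path $P$ from $x$ to $y$ that absorbs essentially all of $L_i$ and leaves the other $L_j$'s and the $P_j$'s intact. Since $D$ has minimum degree at least $|T|-100\Delta_1 k$, any vertex has only $100\Delta_1 k$ non-neighbors in $D$, so one can find $D$-edges from $x$ to some entry portal $u$ of $L_i$ and from some exit portal $v$ of $L_i$ to $y$ that avoid the forbidden paths and the other linkers. The internal gadget structure of $L_i$ then supplies a Hamiltonian path from $u$ to $v$ through $L_i$. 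If the natural choice of $u$ or $v$ is blocked because it is adjacent (in $D$) only to interior vertices of some $P_j$, one reroutes $P_j$ locally around $O(1)$ extra vertices — this is precisely where the ``at most $6$'' budget in condition (ii) is spent. After the operation, the remaining linkers (trimmed of at most a few vertices used to absorb the wastage) form a linking family of size $k-1$ by the inductive hypothesis, because the minimum-degree hypothesis on $D$ continues to hold for the smaller parameter.

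The principal obstacle is twofold. First is the self-referential nature of condition (iv): using $L_i$ may force small modifications to the other $L_j$'s (for instance, if the path $P$ enters $L_i$ through a vertex that was needed by another $L_j$), so the construction must be robust enough that these induced modifications preserve the linking property with the same constants. Second, and more delicate, is keeping the total wastage at an absolute constant $6$ rather than something growing with $k$: the linker must expose enough alternative portals that one can dodge all $100k$ forbidden paths, yet the dodging must consume only a bounded absolute number of fresh vertices. I expect the technical heart of the argument to be the design of the linker gadget so that it has $\Omega(k)$ pairwise-disjoint portal pairs with the property that any $O(1)$ of them can be used for routing, while the minimum-degree condition on $D$ guarantees that at least one unblocked portal pair is always available.
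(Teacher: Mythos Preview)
Your proposal has a genuine gap at the routing step. You write that ``since $D$ has minimum degree at least $|T|-100\Delta_1 k$, any vertex has only $100\Delta_1 k$ non-neighbors in $D$, so one can find $D$-edges from $x$ to some entry portal $u$ of $L_i$.'' This conflates undirected adjacency with the required \emph{direction}. The minimum-degree hypothesis only bounds how many edges of $T$ are missing from $D$; it says nothing about which way the surviving edges point. Even in $T$ itself, $x$ might have every edge to your portal set oriented \emph{into} $x$, so no amount of portal redundancy---even $\Omega(k)$ disjoint portal pairs---guarantees an out-edge from $x$ into the linker. The connectivity assumption gives $d^+(x)\ge C_1k-1$, but those out-neighbours need not meet your portals at all.

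This is exactly the difficulty the paper's construction is built to defeat, and your ``spine plus matching'' gadget does not address it. The paper's linker is assembled from \emph{dominators}: greedy in-dominating sets with the key property (Lemma~3.4) that any vertex $x$ not dominated by the set lies in a small ``uncovered'' set $E^-$ and satisfies $d^+(x)\ge 8|E^-|$. So either $x$ already has an out-edge into the dominator, or $x$ has so many out-neighbours that one of them lands on a protected path at a position whose neighbours are themselves dominated---and a bounded-size sub-linker is then spent healing that path. The cascade (parts (a)--(d) of the claim in Lemma~3.17) is what pins the wastage at an absolute constant, and it depends essentially on the dominator dichotomy rather than on counting portals. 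Your proposal identifies the right obstacles but the mechanism you suggest for overcoming them would not work; the missing idea is the greedy-dominating-set structure and its expansion property for the uncovered set.
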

This Theorem is proved in Section~3.
In the remainder of this section, we show how Theorem~\ref{LinkageStructures} can be used to prove Theorem~\ref{EdgeDisjointHamiltonianCycles}.


First we'll need a simple lemma about linking families.
One important feature of part (ii) of Definition~\ref{LinkingFamily} is that if $P_1, \dots, P_m, L_1, \dots, L_k, x,$ and $y$ partition $V(D)$, then (ii) implies that $P, P'_1 \dots, P'_m, L'_1, \dots, L'_{k-1}$ will partition $V(D)$ also.
This allows us to obtain the following criterion for Hamiltonicity.
\begin{lemma}\label{HamiltonianLemma}
Suppose that for $k\geq 1$, the vertices of a digraph $D$ can be partitioned into $k$ paths and a linking family of size $k$. Then $D$ is Hamiltonian.
\end{lemma}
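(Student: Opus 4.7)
\emph{Proof plan.} My plan is to prove this by induction on $k$, using the ``partition-preserving'' observation stated just before the lemma: if the input paths, linkers, and endpoints $\{x,y\}$ already partition $V(D)$, then condition~(ii) of Definition~\ref{LinkingFamily} forces the ``at most $6$'' spare vertices to be zero, so the output also partitions $V(D)$. The strategy is to spend one linker from the family at each inductive step in order to fuse two of the $k$ paths into one, thereby dropping the number of paths and the size of the linking family by one simultaneously.

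For the inductive step ($k\geq 2$), let $b_{k-1}$ be the last vertex of $P_{k-1}$ and $a_k$ the first vertex of $P_k$. Write $Q_{k-1}:=P_{k-1}\setminus\{b_{k-1}\}$, a path from $a_{k-1}$ to the predecessor $b'_{k-1}$ of $b_{k-1}$, and $Q_k:=P_k\setminus\{a_k\}$, a path from the successor $a'_k$ of $a_k$ to $b_k$. Apply Definition~\ref{LinkingFamily} to $\{L_1,\ldots,L_k\}$ with $x=b_{k-1}$, $y=a_k$, and the (at most $k\leq 100k$) protected paths $P_1,\ldots,P_{k-2},Q_{k-1},Q_k$. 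These inputs partition $V(D)$, so by the observation above, the output paths and subdigraphs $P,P'_1,\ldots,P'_{k-2},Q'_{k-1},Q'_k,L'_1,\ldots,L'_{k-1}$ do as well. Since the edges $b'_{k-1}\to x$ and $y\to a'_k$ lie in $P_{k-1}\cup P_k\subseteq D$, I can splice $Q'_{k-1}$, $P$, and $Q'_k$ along them to obtain a single path $P^*$ from $a_{k-1}$ to $b_k$. Then $P^*,P'_1,\ldots,P'_{k-2}$ are $k-1$ paths which, together with the linking family $\{L'_1,\ldots,L'_{k-1}\}$ of size $k-1$ (guaranteed by (iv)), partition $V(D)$, and the inductive hypothesis produces a Hamiltonian cycle in $D$.

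For the base case $k=1$, write $P_1=a\to v_1\to\cdots\to v_r\to b$ with $r\geq 1$ and apply Definition~\ref{LinkingFamily} to $\{L_1\}$ with $x=b$, $y=a$ and the single protected path $Q_1=v_1\to\cdots\to v_r$. The input again partitions $V(D)$, so the output path $P$ from $b$ to $a$ and the path $Q'_1$ between $v_1$ and $v_r$ together cover $V(D)$, and the cycle $a\to v_1\to Q'_1\to v_r\to b\to P\to a$ closes up into a Hamiltonian cycle using the edges $a\to v_1$ and $v_r\to b$ of $P_1$. The short cases $|P_1|\leq 2$ are analogous. I expect the only friction in writing up the proof to be this kind of degeneracy bookkeeping --- when some $P_i$ consists of a single vertex the corresponding $Q_i$ is empty and a bridging edge disappears --- but the conceptual core is simply that fusing two paths into one costs exactly one linker, which is what drives the induction.
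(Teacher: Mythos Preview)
Your proposal is correct and follows essentially the same approach as the paper's own proof: induction on $k$, with each step spending one linker to fuse two of the paths into one via the partition-preserving reading of condition~(ii). The paper's writeup differs only cosmetically (it removes $x$ from one path and $y$ from the other rather than naming predecessors $b'_{k-1}$ and successors $a'_k$, and it leaves the degenerate short-path cases implicit), so there is nothing substantively new to compare.
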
 
\begin{proof}
The proof is by induction on $k$. 

The initial case is when $k=1$. In this case we have a partition of $V(D)$ into a path $Q$ and a digraph $L$ such that $\{L\}$ is a linking family. Let $y$ and $x$ be the start and end of $Q$ respectively. Let $R=Q-x-y$. Invoking the property of linking families to the linking family $\{L\}$ with the vertices $x$ and $y$, and path $R$, we obtain two paths $P$, $R'$ such that $P$ is from $x$ to $y$ and $R'$ has the same endpoints as $R$. In addition from (ii), we have that $P$ and $R'$ partition $V(D)$. Joining $P$ to $R'$ produces a Hamiltonian cycle.

Now suppose that the lemma holds for $k=k_0$. Suppose that we have a partition of $V(D)$ into $k_0+1$ paths $Q_1, \dots, Q_{k_0+1}$ and a  linking family  $\{L_1, \dots, L_{k_0+1}\}$. 
Let $y$ and $x$ be the start and end of $Q_{k_0+1}$ and $Q_{k_0}$ respectively. Let $Q_-=Q_{k_0}-x$ and $Q_+=Q_{k_0+1}-y$.
Invoking the property of linking families with vertices $x$ and $y$, and paths $Q_1, \dots, Q_{k_0-1}, Q_-, Q_+$, we obtain a path $P$ from $x$ to $y$, a new linking family $\{L'_1, \dots, L'_{k_0}\}$ and new paths $Q'_1, \dots, Q'_{k_0-1}, Q'_-, Q'_+$ with the same endpoints as the previous ones. In addition  $L'_1, \dots, L'_{k_0}$, $Q'_1, \dots, Q'_{k_0-1}, Q'_-, Q'_+$, and $P$ partition $V(D)$. Join $Q'_-$ to $P$ to $Q'_+$ in order to obtain a path $Q'_{k_0}$. Now we have a partition of $D$ into $k$ paths $Q'_1, \dots, Q'_{k_0}$ and a linking family $\{L'_1, \dots, L'_{k_0}\}$. By induction, $D$ is Hamiltonian.
\end{proof}

Combining the above lemma with Theorem~\ref{LinkageStructures} and Corollary~\ref{GallaiMilgramCorollary}, it is easy to prove Conjecture~\ref{HamiltonianConjectureKLOP}.
\begin{proof}[Proof of Theorem~\ref{EdgeDisjointHamiltonianCycles}]
Let $C_1$ and $\Delta_1$ be the constants in Theorem~\ref{LinkageStructures}, and set $C=(\Delta_1+2) C_1$.
Let $T$ be a strongly $Ck^2$-connected tournament. Apply Theorem~\ref{LinkageStructures} in order to obtain a family of $({\Delta_1}+2)k^2$ vertex-disjoint subdigraphs $\{L_{i,j}: 1\leq i\leq k, 1\leq j\leq ({\Delta_1}+2)k\}$.

Let $D_1$ be the digraph formed from $T$ by removing  the edges of the digraphs in $\{L_{i,j}: 2\leq i\leq k ,1\leq j\leq ({\Delta_1}+2)k\}$. Notice that $D_1$ has minimum degree $|T|-{\Delta_1}$. Thus, from Theorem~\ref{LinkageStructures}, the family $\{L_{1,1}, \dots, L_{1,(\Delta_1+2)k}\}$ is a linking family in $D_1$.
Apply Corollary~\ref{GallaiMilgramCorollary}  in order to cover $D_{1}\setminus \big(V(L_{1,1}) \cup\dots\cup V(L_{1, ({\Delta_1}+2)k})\big)$ by $\Delta_1$ vertex-disjoint paths. By splitting some of these paths in two we can find a partition of $D_{1}\setminus \big(V(L_{1,1}) \cup\dots\cup V(L_{1, ({\Delta_1}+2)k})\big)$ into exactly $(\Delta_1+2)k$ paths. Applying  Lemma~\ref{HamiltonianLemma} produces a Hamiltonian cycle~$C_1$ in~$D_1$. 

In general, for any $\ell$ between $2$ and $k$, let $D_{\ell}$ be the digraph formed from $T$ by removing  the edges of all the digraphs in $\{L_{i,j}: \ell+1\leq i\leq k ,1\leq j\leq ({\Delta_1}+2)k\}$ and the cycles $C_1, \dots, C_{\ell-1}$. Notice that $D_{\ell}$ has minimum degree $|T|-{\Delta_1}-2\ell$, and so Theorem~\ref{LinkageStructures} implies that the family $\{L_{\ell,1}, \dots, L_{\ell,(\Delta_1+2)k}\}$ is a linking family in $D_{\ell}$.
Apply Corollary~\ref{GallaiMilgramCorollary}  in order to cover $D_{\ell}\setminus \big(V(L_{\ell,1})\cup \dots\cup V(L_{\ell, ({\Delta_1}+2)k})\big)$ by ${\Delta_1}+2\ell$ vertex-disjoint paths. 
By splitting some of these paths in two we can find a partition of $D_{\ell}\setminus \big(V(L_{\ell,1})\cup \dots\cup V(L_{\ell, ({\Delta_1}+2)k})\big)$ into exactly $(\Delta_1+2)k$ paths.
Applying Lemma~\ref{HamiltonianLemma} produces a 
Hamiltonian cycle $C_{\ell}$ in $D_{\ell}$. 

This gives us the required edge-disjoint Hamiltonian cycles $C_1, \dots, C_{k}$.
\end{proof}

\section{Linkers}
The goal of this section is to prove Theorem~\ref{LinkageStructures}. 
We do this by constructing digraphs which we call \emph{linkers}, such that any family of linkers is a linking family.

Before we can even define linkers, we first need to set up some notation and construct two kinds of gadgets which we call dominators and connectors. 
In the next section we define some notation and prove some auxiliary lemmas about tournaments. In Sections 3.2 and 3.3, we define dominators and connectors. Then in Section 3.4 we define linkers. In  Section 3.5 we show that every highly connected tournament contains many disjoint linkers.  In Section 3.6 we derive the properties of linkers which we will need. Then in Section 3.7 we put everything together and prove Theorem~\ref{LinkageStructures}.

\subsection{Preliminaries}
A directed path $P$ is a sequence of vertices $v_1, v_2,\dots, v_k$ in a directed graph such that $v_i v_{i+1}$ is an edge for all $i=1, \dots, k-1$. All paths in this paper are directed paths. The vertex $v_1$ is called the \emph{start} of $P$, and $v_k$ the \emph{end} of $P$. The \emph{length} of $P$ is the number of edges it has which is $|P|-1$. The vertices $v_{2}, \dots, v_{k-1}$ are the \emph{internal vertices} of $P$. Two paths are said to be internally disjoint if their internal vertices are distinct.

The \emph{out-neighbourhood} of a vertex $v$ in a directed graph, denoted $N^+(v)$ is the set of vertices $u$ for which $vu$ is an edge. Similarly, the \emph{in-neighbourhood}, denoted $N^-(v)$ is the set of vertices $u$ for which $uv$ is an edge.
The \emph{out-degree} of $v$ is $d^+(v)=|N^+(v)|$, and the \emph{in-degree} of $v$ is $d^-(v)=|N^-(v)|$. A useful fact is that every tournament $T$ has a vertex of out-degree at least $(|T|-1)/2$, and a vertex of in-degree at least $(|T|-1)/2$. To see this, notice that since $T$ has $\binom{|T|}2$ edges, its average in and out-degrees are both $(|T|-1)/2$.

We'll need the following definition.
\begin{definition}
A vertex $v$ in a tournament $T$ has large out-degree if there are less than $|T|/25$ vertices $u\in T$ satisfying $d^+(u)> d^+(v)$
\end{definition}
Vertices with \emph{large in-degree} are defined similarly---a vertex has large in-degree in $T$ if there are less than $|T|/25$ vertices $u\in T$ satisfying $d^-(u)> d^-(v)$. Notice that every tournament $T$ contains at least $|T|/25$ vertices of large out-degree, and $|T|/25$ vertices of large in-degree.

Recall that every tournament $T$ has a vertex of out-degree at least $(|T|-1)/2$. By repeatedly pulling out maximum out-degree vertices, this implies that every tournament $T$ contains at least $k$ vertices of out-degree at least $(|T|-k)/2$.
Therefore, if $v$ has large out-degree in $T$, then it must satisfy $d^+(v)\geq 12|T|/25$. 

The important feature of vertices of large in-degrees and out-degrees is that for any pair of vertices one of which has large out-degree, and the other large in-degree, there are many short paths between them.
\begin{lemma}\label{LargeDegreeShortPath}
Suppose that $u$ has large out-degree in $T$ and $v$ has large in-degree in $T$. Then there are at least $|T|/25$ internally vertex-disjoint  paths from $u$ to $v$ in $T$, each of length at most $3$.
\end{lemma}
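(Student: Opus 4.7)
The plan is a greedy augmentation argument: I aim to show that any collection of fewer than $|T|/25$ internally vertex-disjoint $u$-$v$ paths of length at most three in $T$ can be extended by one more such path. Starting from the empty collection and iterating then produces the required $|T|/25$ paths.

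So suppose we have already built $k < |T|/25$ such paths, with internal-vertex set $W$, so that $|W| \leq 2k < 2|T|/25$. Let $A := N^+_T(u) \setminus (W \cup \{v\})$ and $B := N^-_T(v) \setminus (W \cup \{u\})$. By the large-degree hypotheses and the observation stated just before the lemma, $d^+(u), d^-(v) \geq 12|T|/25$, hence $|A|, |B| \geq 10|T|/25 + 1$. I would proceed via three cases. If $uv \in E(T)$ and we have not yet used the length-$1$ path $u \to v$, take it. If $A \cap B \neq \emptyset$, any $w$ in the intersection yields a length-$2$ path $u \to w \to v$. Otherwise, the tournament property forces $A \subseteq N^+(v)$ and $B \subseteq N^-(u)$ (since any $a \in A$ lies outside $N^-(v)$, and so $v \to a$), and it suffices to produce an edge $a \to b$ in $T$ with $a \in A, b \in B$ to give the length-$3$ path $u \to a \to b \to v$.

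The heart of the argument---and the main obstacle---is the third case: showing such an $a \to b$ edge must exist. Suppose otherwise, so that all $|A|\cdot|B|$ edges between $A$ and $B$ are oriented $B \to A$. Then for every $a \in A$ the in-neighbourhood contains $\{u,v\} \cup B$, giving $d^-(a) \geq |B| + 2$. Since $v$ has large in-degree, fewer than $|T|/25$ vertices satisfy $d^- > d^-(v)$; as $|A| > |T|/25$, some $a \in A$ has $d^-(a) \leq d^-(v)$, forcing $|B| + 2 \leq d^-(v)$. Combined with the identity $|B| = d^-(v) - |N^-(v) \cap W| - [uv \in E]$ and the symmetric inequality from $u$'s large out-degree, this yields lower bounds on $|N^-(v) \cap W|$ and $|N^+(u) \cap W|$. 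The third-case hypothesis also forces $N^+(u) \cap N^-(v) \subseteq W$; summing the two bounds and feeding in this containment should produce a lower bound on $|W|$ incompatible with $|W| < 2|T|/25$.

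Pinning down the final arithmetic is the delicate part, because the available slack between $|B| + 2 \geq 10|T|/25 + 3$ and $d^-(v) \geq 12|T|/25$ is only about $2|T|/25$, matching the allowed size of $W$. I expect the cleanest way to close the argument is to average $d^-(a)$ over the at least $|A| - |T|/25$ vertices $a \in A$ with $d^-(a) \leq d^-(v)$ (rather than picking a single witness), bounding $\sum_{a \in A} d^-(a)$ from below using $|B| + 2$ and from above using the large-in-degree hypothesis, and using the tournament identity $\sum_{a \in A} d^-(a) = \binom{|A|}{2} + 2|A| + |A|\cdot|B| + |E(W, A)|$ to extract a sharp enough inequality; the symmetric computation on the $B$-side is then combined to yield the contradiction with $|W| < 2|T|/25$.
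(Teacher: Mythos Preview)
Your greedy framework is a legitimate alternative to the paper's global matching argument, and Cases~1 and~2 are fine. The gap is in Case~3. Your first pass---picking a single $a\in A$ with $d^-(a)\le d^-(v)$---only yields $|N^-(v)\cap W|\ge 2-[uv\in E]$, a constant; summing with the symmetric bound on $|N^+(u)\cap W|$ gives $|W|\ge 1$ or $2$, which cannot possibly contradict $|W|<2|T|/25$. You acknowledge this and propose averaging $\sum_{a\in A}d^-(a)$. That does in fact close (your displayed identity is missing the contribution $e(R\to A)$ from $R=T\setminus(\{u,v\}\cup A\cup B\cup W)$, but this term has the helpful sign), yet you have not actually carried out the computation, and the single-vertex argument you wrote down contributes nothing toward it.

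A cleaner completion, essentially the paper's key step transplanted into your greedy setup, is to work on the $B$ side with out-degrees. In Case~3 every $b\in B$ has $\{u,v\}\cup A\subseteq N^+(b)$, so $d^+(b)\ge |A|+2+d^+_B(b)$. The top $\lceil|T|/25\rceil$ vertices of $B$ by out-degree within $B$ satisfy $d^+_B(b)\ge(|B|-|T|/25)/2\ge 9|T|/50$. On the other hand $N^+(u)\subseteq A\cup W\cup\{v\}$, so $d^+(u)\le |A|+|W|+1$. Subtracting, $d^+(b)-d^+(u)\ge 1+9|T|/50-|W|>0$ since $|W|<2|T|/25=4|T|/50$. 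Hence at least $|T|/25$ vertices have out-degree strictly exceeding $d^+(u)$, contradicting the large-out-degree hypothesis. The paper does exactly this globally: it takes a maximum matching $M$ from $U\approx A$ to $V\approx B$, notes that all edges between the unmatched parts go $V\to U$, and then exhibits $|T|/25$ vertices in $V$ beating $d^+(u)$ by the same count.
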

\begin{proof}
Let $I=N^+(u)\cap N^-(v)$, $U=\big(N^+(u)+u\big)\setminus \big(N^-(v)+v\big)$, $V=\big(N^-(v)+v\big)\setminus \big(N^+(u)+u\big)$, and $M$ a maximum matching of edges directed from $U$ to $V$.

Notice that there are exactly $|I|+e(M)$ paths of length $\leq 3$ from $u$ to $v$, and so if $|I|+e(M)\geq |T|/25$ holds, then we are done. So, suppose for the sake of contradiction that we have $|I|+e(M)< |T|/25$.

Recall that since $u$ has large out-degree we have $d^+(u)\geq 12|T|/25$. This implies
$$|U\setminus M|=|N^+(u)+u-v|-|I|-e(M)\geq |N^+(u)|-|T|/25\geq 11|T|/25.$$
Similarly we have $|V\setminus M|\geq 11|T|/25.$ Since $N^+(u)\subseteq T\setminus (V+u)$ we obtain $d^+(u)\leq 14|T|/25+1.$

Since $M$ is maximal, all the edges between $U\setminus M$ and $V\setminus M$ are directed from $V$ to $U$.
Therefore the $|T|/25$ vertices of largest out-degree in $V$ all have out-degree at least $|U\setminus M|+(|V\setminus M|-|T|/25)/2\geq 16|T|/25$. Since $d^+(u)< 14|T|/25+1$, this contradicts $u$ having large out-degree.
\end{proof}

A tournament $T$ is transitive if for any three vertices $x,y,z\in V(T)$, if $xy$ and $yz$ are both edges, then $xz$ is also an edge. It's easy to see that a tournament is transitive exactly when it has an ordering $(v_1, v_2,\dots, v_k)$ of $V(T)$ such that the edges of $T$ are $\{v_iv_j: i<j\}$. We say that $v_1$ is the \emph{tail} of $T$, and $v_k$ is the \emph{head} of $T$.

A simple, but very important fact is that every tournament contains a large transitive subtournament.
\begin{lemma}\label{TransitiveSubtournament}
Every tournament $T$ contains a transitive subtournament on at least $\log_2 |T|$ vertices.
\end{lemma}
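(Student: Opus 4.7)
The plan is to prove the lemma by induction on $n=|T|$, in the slightly strengthened form that every tournament on $n$ vertices contains a transitive subtournament of size at least $\lfloor \log_2 n\rfloor + 1$ (which is an integer $\geq \log_2 n$, so it implies the stated bound). The base case $n=1$ is immediate: a single vertex is a transitive subtournament of size $1 = \lfloor \log_2 1\rfloor + 1$.

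For the inductive step, fix any vertex $v\in V(T)$. Every other vertex of $T$ lies in exactly one of $N^+(v)$ and $N^-(v)$, so these two sets partition $V(T)\setminus\{v\}$ and hence $|N^+(v)|+|N^-(v)|=n-1$. By pigeonhole, one of them has size at least $\lceil (n-1)/2\rceil$; suppose without loss of generality this is $N^+(v)$ (the other case is symmetric, with $v$ eventually playing the role of head instead of tail). Apply the induction hypothesis to the sub-tournament $T[N^+(v)]$ to obtain a transitive subtournament $T'\subseteq N^+(v)$ with vertices ordered $u_1,\dots,u_m$ so that $u_iu_j$ is an edge for all $i<j$, where $m\geq \lfloor\log_2 \lceil(n-1)/2\rceil\rfloor + 1$.

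Now prepend $v$ to this ordering. Since $v\to u_i$ for every $i$ (as $u_i\in N^+(v)$), the ordering $v,u_1,\dots,u_m$ is transitive, yielding a transitive subtournament of $T$ on $m+1$ vertices. It then suffices to check the routine arithmetic $\lfloor \log_2 \lceil(n-1)/2\rceil\rfloor + 2 \geq \lfloor\log_2 n\rfloor + 1$ for $n\geq 2$, which follows from the standard identity $\lfloor\log_2 \lfloor n/2\rfloor\rfloor = \lfloor\log_2 n\rfloor - 1$ together with $\lceil (n-1)/2\rceil \geq \lfloor n/2\rfloor$. There is no real obstacle here: the only thing to be a little careful about is the floor/ceiling bookkeeping around the case when $n$ is odd, which is why it is cleaner to state and prove the integer-valued strengthening rather than manipulate $\log_2 n$ directly.
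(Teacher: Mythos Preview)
Your proof is correct and follows essentially the same halving argument as the paper's one-line sketch: both repeatedly pass to a neighbourhood of size at least $(|T|-1)/2$ and recurse. The only cosmetic difference is that the paper always restricts to out-neighbourhoods by choosing a maximum out-degree vertex at each step, whereas you pick an arbitrary vertex and use whichever of $N^+(v)$, $N^-(v)$ is larger.
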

This lemma is proved by choosing the vertex sequence $(v_1, \dots, v_k)$ of the transitive tournament recursively, by letting $v_i$ be a maximum out-degree vertex in $\bigcap_{j=1}^{i-1} N^+(v_i)$.

A set of vertices $S$ in-dominates another set $B$, if for every $b\in B\setminus S$, there is some $s\in S$ such that $bs$ is an edge. Notice that by this definition, a set in-dominates itself. A \emph{in-dominating set} in a tournament $T$ is any set $S$ which in-dominates $V(T)$. Notice that by repeatedly pulling out vertices of largest in-degree and their in-neighbourhoods from $T$, we can find an in-dominating set of order at most $\lceil\log_2 |T|\rceil$. For our purposes we'll study sets which are constructed by pulling out some fixed number of vertices by this process.

\begin{definition}
We say that a sequence $(v_1,v_2,\dots, v_k)$ of vertices of a tournament $T$ is a partial greedy in-dominating set if $v_1$ is a maximum in-degree vertex in $T$, and for each $i$, $v_i$ is a maximum in-degree vertex in the subtournament of $T$ on $N^+(v_1)\cap N^+(v_2)\cap \dots \cup N^+(v_{i-1})$.
\end{definition}
Partial greedy out-dominating sets are defined similarly, by letting $v_i$ be a maximum out-degree vertex in $N^-(v_1)\cap N^-(v_2)\cap \dots \cap N^-(v_{i-1})$ at each step.

Notice that every partial greedy in-dominating set is a transitive tournament with head $v_k$ and tail $v_1$.

For small $k$, partial greedy in-dominating sets do not necessarily dominate all the vertices in a tournament. A crucial property of partial greedy in-dominating sets is that the vertices they don't dominate have large out-degree.
The following is a version of a lemma appearing in~\cite{KLOP}.
\begin{lemma}\label{GreedyDominatingSet}
Let $(v_1,v_2,\dots, v_k)$ be a partial greedy in-dominating set in a tournament~$T$. Let $E$ be the set of vertices which are not in-dominated by $A$. Then every $u\in E$ satisfies $d^+(u)\geq 2^{k-1}|E|.$
\end{lemma}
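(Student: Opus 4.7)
The plan is to reduce everything to studying the nested chain of subtournaments obtained by intersecting the out-neighbourhoods of $v_1,\dots,v_k$. Set $T_0=T$ and $T_i=T[N^+(v_1)\cap\dots\cap N^+(v_i)]$ for $1\le i\le k$, and write $n_i:=|V(T_i)|$. My first step is to identify $E$ with $V(T_k)$ so that $|E|=n_k$: a vertex $u$ fails to be in-dominated by $A=\{v_1,\dots,v_k\}$ exactly when $u\notin A$ and $v_j\to u$ for every $j\leq k$, and since $v_j\notin N^+(v_j)$ the set $V(T_k)$ is automatically disjoint from $A$.

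The second step is to establish the geometric recursion $n_i\geq 2n_{i+1}+1$. By the definition of a partial greedy in-dominating set, $v_{i+1}$ is a maximum in-degree vertex of $T_i$; since $d^+ + d^- = n_i - 1$ inside $T_i$, this is the same as saying $v_{i+1}$ is a \emph{minimum} out-degree vertex of $T_i$. The average out-degree in any tournament on $n_i$ vertices is $(n_i-1)/2$, so $n_{i+1} = d^+_{T_i}(v_{i+1}) \leq (n_i-1)/2$, which rearranges to the desired recursion. Iterating $k-1$ times gives $n_1 \geq 2^{k-1}n_k + (2^{k-1}-1) \geq 2^{k-1}|E|$.

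Finally I will apply the same max-in-degree/min-out-degree trick one more time, now at the top level: $v_1$ is a maximum in-degree and hence minimum out-degree vertex of $T$, so every $u\in V(T)$ satisfies $d^+_T(u) \geq d^+_T(v_1) = |N^+(v_1)| = n_1 \geq 2^{k-1}|E|$. Since $E\subseteq V(T)$, this is exactly the bound claimed. There is essentially no obstacle in the proof; the only things to get straight are the in-degree/out-degree swap (used at two different scales, once inside $T$ and once inside each $T_i$) and the fact that the inequality $d^+_{T_i}(v_{i+1})\leq (n_i-1)/2$ uses the average out-degree \emph{inside} $T_i$, not inside $T$.
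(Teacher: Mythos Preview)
Your proof is correct and follows essentially the same approach as the paper's. Both arguments rest on the single observation that in a tournament a maximum in-degree vertex is a minimum out-degree vertex, so $d^+_{T_i}(v_{i+1})=n_{i+1}\le (n_i-1)/2$; the paper packages this as an induction on $k$, while you unroll the same recursion explicitly and then apply the min-out-degree trick once more at the top level to get $d^+_T(u)\ge n_1$ for every $u$ (in fact yielding the bound for all vertices of $T$, not only those in $E$).
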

\begin{proof}
The proof is by induction on $k$. The initial case is when $k=1$. In this case we have $E=N^+(v_1)$ where $v_1$ is a maximum in-degree vertex in $T$. For any $u\in E$, we must have $d^-(u)\leq d^-(v_1)=|T\setminus E- v_1|=|T|-|E|-1$. Therefore we have $d^+(u)=|T\setminus N^-(u)-u|= |T|-d^-(u)-1\geq |E|$ as required.

Now suppose that the lemma holds for $k=k_0$. Let $(v_1, \dots, v_{k_0+1})$ be a partial greedy in-dominating set in $T$, and let $E_0=N^+(v_1)\cap \dots \cap N^+(v_{k_0})$. 
By induction we have $d^+(u)\geq 2^{k_0-1}|E_0|$ for every $u\in E_0$. 
By definition $v_{k_0+1}$ is a maximum in-degree vertex in $E_0$. Let $E=E_0\cap N^+(v_{k_0+1})$ be the set of vertices not in-dominated by $(v_1, \dots, v_{k_0+1})$. 
Since $v_{k_0+1}$ is a maximum in-degree vertex in $E_0$, we have $|N^-(v_{k_0+1})\cap E_0|\geq (|E_0|-1)/2$ which implies $|E|=|E_0|-|(N^-(v_{k_0+1})+v_{k_0+1})\cap E_0|\leq |E_0|/2$.
Combining this with the inductive hypothesis, we obtain $d^+(u)\geq 2^{k_0-1}|E_0|\geq 2^{k_0}|E|$, completing the proof. 
\end{proof}

\subsection{Dominators}
In order to construct our linking structures, we will need special sets of vertices which we call \emph{dominators}. Informally, a dominator behaves like a partial greedy dominating set, but with some ``extra'' vertices which can be removed without ruining the domination.

\begin{definition}
A $(m,M,p)$-indominator $D^-$ in a tournament $T$ is a $5$-tuple $(A^1$, $A^2$, $A^3$, $A^4$, $E^-)$ of sets of vertices  in $T$ with the following properties.
\begin{enumerate}[(D1)]
\item $A^1$, $A^2$, $A^3$, and $A^4$ are all disjoint.
\item \label{Indom:Transitive} For $i=1,2,3$ the tournament on $A^i \cup A^{i+1}$ is transitive with tail in $A^i$ and head in~$A^{i+1}$. 
\item $|A^2|=|A^3|=m$.
\item $|A^1|=|A^4|=M$.
\item $A^2\cup A^3$ in-dominates $T\setminus (A^1\cup A^2\cup A^3\cup A^4\cup E^-)$. 
\item  \label{Indom:EExpansion} $d^+(v)\geq p|E^-|$ for every $v\in E^-$.
\end{enumerate}
\end{definition}
We call $E^-$ the \emph{uncovered} set of the indominator. The \emph{vertex set} of the indominator, denoted $V(D^-)$ is the set $A^1\cup A^2\cup A^3\cup A^4$.

We say that $D^-$ is an $(m,M,p)$-indominator in $T$ \emph{with exceptional set} $X$ if $D^-$ is an indominator in $(T\setminus X)\cup V(D^-)$. This terminology will be convenient since we will sometimes have many indominators in a single tournament $T$, all of which have different exceptional sets. 

An outdominator is defined to be an indominator in the tournament formed from $T$ by reversing all arcs. For convenience we list its properties here.
\begin{definition}
A $(m,M,p)$-outdominator $D^+$ in a tournament $T$ is a $5$-tuple $(B^1$, $B^2$, $B^3$, $B^4$, $E^+)$ of sets of vertices  in $T$ with the following properties.
\begin{enumerate}[(D1)]
\item $B^1$, $B^2$, $B^3$, and $B^4$ are all disjoint.
\item For $i=1,2,3$ the tournament on $B^i \cup B^{i+1}$ is transitive with head in $B^i$ and tail in~$B^{i+1}$.
\item $|B^2|=|B^3|=m$.
\item $|B^1|=|B^4|=M$.
\item  $B^2\cup B^3$ out-dominates $T\setminus (B^1\cup B^2\cup B^3\cup B^4\cup E^+)$. 
\item  $d^-(v)\geq p|E^+|$ for every $v\in E^+$.
\end{enumerate}
\end{definition}

When dealing with indominators, they will always be labelled by ``$D^-$'' (possibly with some subscript), their four sets of vertices will always be labelled by ``$A^1, \dots, A^4$'', and the set of uncovered vertices will be labelled ``$E^-$''. Similarly outdominators will always be labelled as in their definition. Exceptional sets of vertices will always be labelled by the letter ``$X$''.
The tail of the transitive tournament on $A^1$ in  an indominator $D^-$ is called the the \emph{tail of }$D^-$, and the head of $A^4$ is the \emph{head of }$D^-$. Similarly in an outdominator $D^+$, the head of $B^1$ and the tail of $B^4$ are called the \emph{head and tail of }$D^+$ respectively.

The following lemma is an intermediate step we need in order to construct dominators.
\begin{lemma}\label{PredominatorExistence}
For any numbers $m$, $M$, $L$, and $p$ with $L\geq 2^{m+M}$ and $p\leq 2^{m-1}$, the following holds.
If $T$ is a tournament with $|T|\geq L$, then there are sets of vertices $A,B, E^-,X \subseteq V(T)$ with the following properties.
\begin{enumerate}[(i)]
\item $A\cup B$ is a transitive tournament with its tail in $A$ and its head in $B$
\item $|A|=m, |B|=M$.
\item $A$ in-dominates $T\setminus (E^-\cup X)$.
\item $|X|\leq L$.
\item $d^+(u)\geq p|E^-|$ for every vertex $u \in E^-$.
\end{enumerate}
\end{lemma}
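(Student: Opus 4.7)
The plan is to construct $A, B, E^-, X$ by running the partial greedy in-dominating process of the paper for $m+M$ steps. I would pick $v_1$ to be a maximum in-degree vertex in $T$, and then iteratively define $v_i$ to be a maximum in-degree vertex of $N_{i-1}=\bigcap_{j<i}N^+(v_j)$. Since every partial greedy in-dominating sequence is automatically a transitive subtournament with tail $v_1$ and head $v_k$, obtaining the full sequence $v_1,\ldots,v_{m+M}$ would immediately yield a transitive subtournament of size $m+M$ inside $T$.

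Given such a sequence, I set $A=\{v_1,\ldots,v_m\}$, $B=\{v_{m+1},\ldots,v_{m+M}\}$, $X=B$, and $E^-=N_m\setminus B$. Properties (i) and (ii) are then immediate because $A\cup B$ is the full transitive sequence, with tail $v_1\in A$ and head $v_{m+M}\in B$. Since $B\subseteq N_m$ and $A$ in-dominates $T\setminus N_m$ by the very definition of $N_m$, we get $T\setminus(E^-\cup X)=T\setminus N_m$, giving property (iii). Property (iv) holds because $|X|=M\leq 2^{m+M}\leq L$. For (v), Lemma \ref{GreedyDominatingSet} applied to the length-$m$ partial greedy in-dominating set $A$ gives $d^+(u)\geq 2^{m-1}|N_m|$ for every $u\in N_m$; since $E^-\subseteq N_m$ and $p\leq 2^{m-1}$, this yields $d^+(u)\geq p|E^-|$.

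The main obstacle is that the greedy process can stall before completing $m+M$ steps, because $N_i$ can shrink to $\emptyset$: in a transitive tournament, for example, $v_1$ is the head of $T$ and $N_1=\emptyset$ already. I would deal with this by modifying the construction so that, whenever a maximum-in-degree pick threatens to make $N_i$ too small to finish, we instead apply Lemma \ref{TransitiveSubtournament} inside the current $N_i$ to extract a transitive subtournament that serves as the remaining vertices of $B$. The hypothesis $L\geq 2^{m+M}$ is designed exactly to balance these two ingredients: the $m$ halvings of $N_i$ coming from the greedy in-degree picks against the $M$ bits needed by Lemma \ref{TransitiveSubtournament} to extract a transitive subtournament of size $M$. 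Checking that the in-domination property of $A$, and hence property (v), survives this switch from greedy picks to transitive-subtournament picks is the most delicate step, and is where I expect the argument to be genuinely technical.
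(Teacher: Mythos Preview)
Your handling of the non-degenerate case, where the greedy process runs for a full $m+M$ steps, is correct and essentially matches the paper's first case. You also correctly identify the obstacle: the greedy process can stall because $N_i$ may become empty long before step $m+M$.

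However, your proposed fix has a genuine gap. You describe each greedy step as a ``halving'' of $N_i$, and say that the hypothesis $L\geq 2^{m+M}$ balances $m$ such halvings against the $M$ bits needed by Lemma~\ref{TransitiveSubtournament}. This is backwards: picking a maximum in-degree vertex guarantees $|N_i|\leq |N_{i-1}|/2$, not $|N_i|\geq |N_{i-1}|/2$. There is no lower bound on $|N_i|$ at all; your own transitive-tournament example shows $N_1$ can already be empty. So when you propose to ``apply Lemma~\ref{TransitiveSubtournament} inside the current $N_i$'', there may simply be nothing there to extract. And if you switch to transitive extraction before completing $m$ greedy steps, the resulting $A$ is no longer a partial greedy in-dominating set, so Lemma~\ref{GreedyDominatingSet} no longer delivers~(v).

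The paper resolves this with an idea you do not have: it carries along an auxiliary set $X$ disjoint from the greedy sequence, maintained so that every edge between the sequence and $X$ points into $X$, and runs the greedy process in $T\setminus X$ rather than in $T$. One first maximises $|X|$ subject to $|X|\leq L$, and only then maximises the length $k$ of the greedy sequence. If $k<m+M$, these two maximalities force both that $\{v_1,\dots,v_k\}$ already in-dominates $T\setminus X$ and that $|X|=L\geq 2^{m+M}$. Lemma~\ref{TransitiveSubtournament} is then applied inside $X$, not inside $N_i$; the edge-orientation condition is precisely what lets the extracted transitive piece be appended at the head of the sequence. Since the whole of $T\setminus X$ is now in-dominated, one takes $E^-=\emptyset$, and (v) holds trivially. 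The hypothesis $L\geq 2^{m+M}$ is used to guarantee that this reservoir $X$ is large enough, not to control the size of $N_m$.
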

\begin{proof}
We choose a set $X\subset V(T)$ and a disjoint sequence of vertices $v_1, v_2, \dots, v_k$  with the following properties
\begin{enumerate}[(a)]
\item $|X|\leq L$.
\item All edges between $\{v_1, v_2, \dots, v_k\}$ and $X$ are oriented from $\{v_1, v_2, \dots, v_k\}$ to $X$.
\item $(v_1, v_2, \dots, v_k)$ is a partial greedy in-dominating set in $T\setminus X$.
\item $|X|$ is as large as possible, whilst keeping (a) -- (c) true.
\item The value $k$ is as large as possible, whilst keeping (a) -- (d) true.
\end{enumerate} 
To see that such a choice is possible, notice that choosing $X=\emptyset$ and $(v_1, \dots, v_k)$ to be any partial greedy in-dominating set in $T$ gives a sequence satisfying (a) -- (c).  Therefore it is also possible to choose $X$ and  $(v_1, \dots, v_k)$ to be maximal in the sense of (d) and (e). Notice that condition (c) implies that $(v_1, \dots, v_k)$ is a transitive tournament with head $v_k$, and tail $v_1$. There are two cases depending on whether $k\geq m+M$ or not.

Suppose that $k\geq m+M$. Let $A=\{v_1,\dots, v_m\}$, $B=\{v_{m+1}, \dots, v_{m+M}\}$, $E^-=N^+(v_1)\cap\dots\cap N^+(v_{m})\setminus X$. Notice that conditions (i) -- (iv) hold with this choice. Condition (v) follows from  Lemma~\ref{GreedyDominatingSet} and $p\leq 2^{m-1}$.

Suppose that $k< m+M$. By maximality of $k$ in (e), the set ${v_1, \dots, v_m}$  in-dominates $T\setminus X$.  We must also have $|X|=L$, since otherwise adding $x_k$ to $X$ would produce a larger set satisfying (a) -- (c), contradicting maximality of $X$ in (d). Therefore, since $L\geq 2^{m+M}$, Lemma~\ref{TransitiveSubtournament} implies that there is a transitive subtournament $X_T$ of $X$ order $m+M-k$. 
Let the vertex sequence of $X_T$ be $(v_{k+1}, \dots, v_{m+M})$. 
Let $X'=X\setminus X_T$, $E^-=\emptyset$,  $A=\{v_1, \dots, v_{m}\}$, and $B=\{v_{m+1}, \dots, v_{M}\}$.
All the conditions (i) -- (v) are immediate with this choice of sets.
\end{proof}

The following lemma guarantees the existence of dominators in tournaments.
\begin{lemma}\label{DominatorExistence}
For any numbers $m$, $M$, $L$, and $p$ with $L\geq 2^{m+M}+m+M$ and $p\leq 2^{m-1}$, the following holds. Let $T$ be a tournament on at least $25(2^{2m+2M})$ vertices and $Y\subseteq V(T)$ with $|Y|\leq |T|/25-2^{2m+2M}$.  Then, $T$ contains a $(m,M,p)$-indominator $D^-=(A^1, A^2, A^3, A^4, E^-)$  with an exceptional set $X$ such that $Y\cap V(D^-)=\emptyset$, $Y\subseteq X$, $|X|\leq L+|Y|$, and all vertices of $A^1$ have large in-degree in $T$.
\end{lemma}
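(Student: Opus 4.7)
The plan is to build the indominator $D^- = (A^1, A^2, A^3, A^4, E^-)$ in two stages. First, I construct $A^1$ from vertices of large in-degree in $T$ that avoid $Y$. Second, I construct $A^2, A^3, A^4$ by running a single greedy in-dominating procedure inside the common out-neighborhood of $A^1$, and reading off three consecutive chunks (of sizes $m, m, M$) of the resulting sequence.

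For Step 1, the set $L_{\mathrm{in}}(T)$ of large-in-degree vertices satisfies $|L_{\mathrm{in}}(T)| \geq |T|/25$, so $|L_{\mathrm{in}}(T) \setminus Y| \geq 2^{2m+2M}$ by our assumption on $|Y|$. Applying the greedy construction from the proof of Lemma~\ref{TransitiveSubtournament} inside $L_{\mathrm{in}}(T) \setminus Y$ produces a transitive subtournament $A^1 = \{a_1, \ldots, a_M\}$ with tail $a_1$ and head $a_M$, all of whose vertices have large in-degree in $T$ and lie outside $Y$. The standard halving analysis shows that $N := \bigcap_{j=1}^M N^+(a_j)$ intersected with $L_{\mathrm{in}}(T) \setminus Y$ has size at least (essentially) $2^{2m+M}$. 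For Step 2, I adapt the proof technique of Lemma~\ref{PredominatorExistence} inside $N \setminus A^1$, taking the initial exceptional set to contain $Y$. Since $|T| \geq 25 \cdot 2^{2m+2M}$, the greedy in-dominating sequence $(v_1, \ldots, v_k)$ in $N \setminus A^1 \setminus X^\ast$ can be made to have length $k \geq 2m + M$, placing us in the analogue of the ``$k \geq m + M$'' case of Lemma~\ref{PredominatorExistence} and avoiding the need to invoke Lemma~\ref{TransitiveSubtournament} on $X^\ast$ (which would have required an $L$ budget of size $2^{2m+M}$ instead of the much smaller $2^{m+M}+m+M$ that we have). I then read off
\[
A^2 = \{v_1, \ldots, v_m\}, \quad A^3 = \{v_{m+1}, \ldots, v_{2m}\}, \quad A^4 = \{v_{2m+1}, \ldots, v_{2m+M}\},
\]
and set $E^- = \bigcap_{i \leq 2m} N^+(v_i) \cap (T \setminus X^\ast) \setminus V(D^-)$. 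All transitivity conditions are then automatic: $A^2 \cup A^3 \cup A^4$ is transitive as a greedy sequence, and because $A^2 \subseteq N \subseteq N^+(A^1)$ and $A^1$ is internally transitive, $A^1 \cup A^2$ is transitive with tail $a_1 \in A^1$ and head $v_m \in A^2$. The in-domination property of $A^2 \cup A^3$ holds by construction: any vertex $u \in (T \setminus X^\ast) \setminus V(D^-)$ not in-dominated by $A^2 \cup A^3$ must satisfy $u \in \bigcap_{i \leq 2m} N^+(v_i)$, hence $u \in E^-$.

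The main obstacle I expect is verifying the degree bound $d^+(u) \geq p|E^-|$ for every $u \in E^-$ while keeping the $|X^\ast| \leq L + |Y|$ budget. Applying Lemma~\ref{GreedyDominatingSet} with $k = 2m$ directly gives $d^+(u) \geq 2^{2m-1}|E^-|$ for every $u \in E^-$ that lies inside $N$, and $2^{2m-1} \geq 2^{m-1} \geq p$ finishes that case. However, $E^-$ may also contain ``stray'' vertices lying in $N^-(A^1) \setminus X^\ast \setminus V(D^-)$ that happen to be in $\bigcap_{i \leq 2m} N^+(v_i)$, and Lemma~\ref{GreedyDominatingSet} says nothing about their out-degrees in $T$. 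The plan is to absorb these strays into $X^\ast$, using the fact that Lemma~\ref{GreedyDominatingSet} sharply upper-bounds the common out-neighborhood of the greedy sequence and that the large-in-degree property of $A^1$ limits how many of those common out-neighbors actually fall inside $N^-(A^1)$. Getting this final bookkeeping exactly right, so that $|X^\ast|$ never exceeds $L + |Y|$ while every $u \in E^-$ receives its degree guarantee, is the delicate part.
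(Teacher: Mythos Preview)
Your approach has a genuine gap that cannot be repaired along the lines you suggest. By running the greedy in-dominating procedure inside $N = \bigcap_{j} N^+(a_j)$, you only control which vertices \emph{of $N$} fail to be in-dominated by $A^2 \cup A^3$. But condition~(D5) concerns all of $T$ (minus the exceptional set), and the vertices outside $N$ --- that is, $N^-(A^1)$ --- form a set of size at least $12|T|/25$, since each $a_j$ has large in-degree. Nothing prevents the number of strays in $\bigl(\bigcap_{i \le 2m} N^+(v_i)\bigr) \cap N^-(A^1)$ from being of order $|T|$: the greedy choice of the $v_i$ only constrains their edges within $N$, so one can easily arrange that every $v_i$ sends edges to all of $N^-(A^1)$. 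Your absorption plan therefore cannot fit inside the $L+|Y|$ budget, and your remark that ``the large-in-degree property of $A^1$ limits how many of those common out-neighbours fall inside $N^-(A^1)$'' is backwards --- large in-degree of $A^1$ makes $N^-(A^1)$ large, not small.

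The paper sidesteps this by taking both $A^1$ and $A^2$ (not just $A^1$) from a single transitive subtournament $S \subseteq T\setminus Y$ of $2m+2M$ large-in-degree vertices, split as $S = S_1\cup S_2\cup S_3\cup S_4$ of sizes $M,m,m,M$. The crucial point is that $A^2 = S_2$ then in-dominates all of $N^-(S_2)$ for free. One deletes $Y\cup S\cup N^-(S_2)$ to form a residual tournament $T'$, and applies Lemma~\ref{PredominatorExistence} to $T'$ (with parameters $m,M,p$ and budget $L-M-m$) to obtain $A^3 = A$ and $A^4 = B$; the final exceptional set is $X\cup Y\cup S_3\cup S_4$. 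Now $A^2$ covers $N^-(A^2)$ while $A^3$ covers $T'\setminus(E^- \cup X)$, so $A^2\cup A^3$ in-dominates everything required, and since $E^- \subseteq T'$, part~(v) of Lemma~\ref{PredominatorExistence} gives~(D6) directly with no strays to absorb. The missing idea, in short, is that it is $A^2$, not $A^1$, whose in-neighbourhood you must peel off before running the greedy.
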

\begin{proof}
Notice that there are at least $2^{2m+2M}$ vertices in $T\setminus Y$ of large in-degree. Therefore, by Lemma~\ref{TransitiveSubtournament}, we can choose a transitive subtournament $S\subseteq T\setminus Y$ of $2m+2M$ vertices with large indegree.  Let $S_1$ be the first $M$ vertices of $S$, $S_2$ the next $m$ vertices, $S_3$ the next $m$ vertices, and $S_4$ the last $M$ vertices. Let $T'=T\setminus (Y\cup S_1\cup S_2\cup S_3\cup S_4\cup N^-(S_2))$.  

If $|T'|\leq L$, then the lemma follows by choosing $A^i=S_i$ for $i=1,$ $2,$ $3,$ $4$, $E^-=\emptyset$, and $X=V(T')\cup Y$.

If $|T'|\geq L$, then we apply Lemma~\ref{PredominatorExistence} to $T'$ with the parameters $m$, $M$, $p$, and $L'=L-M-m$. This gives us sets $A,$ $B,$ $E^-,$ and $X$ as in Lemma~\ref{PredominatorExistence} satisfying $|A|=m$, $|B|=M$, and $|X|\leq L-M-m$. Then we let $A^1=S_1$, $A^2=S_2$, $A^3=A$, $A^4=B$, and $X'=X\cup Y\cup S_3\cup S_4$. With this definition $(A^1,A^2, A^3,A^4, E^-)$, is an indominator in $T$ with exceptional set $X'$. Indeed, conditions (D1) -- (D5) are immediate, and part (v) of Lemma~\ref{PredominatorExistence} implies that (D6) holds.
\end{proof}

By reversing arcs, we obtain the following version of Lemma~\ref{DominatorExistence} for outdominators.
\begin{lemma}\label{OutdominatorExistence}
For any numbers $m$, $M$, $L$, and $p$ with $L\geq 2^{m+M}+m+M$ and $p\leq 2^{m-1}$, the following holds. Let $T$ be a tournament on at least $25(2^{2m+2M})$ vertices and $Y\subseteq V(T)$ with $|Y|\leq |T|/25-2^{2m+2M}$.  Then, $T$ contains a $(m,M,p)$-outdominator $D^+=(B^1, B^2, B^3, B^4, E^+)$  with an exceptional set $X$ such that $Y\cap V(D)=\emptyset$, $Y\subseteq X$, $|X|\leq L+|Y|$, and all vertices of $B^1$ have large out-degree in $T$.
\end{lemma}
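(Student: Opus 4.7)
The plan is to deduce Lemma~\ref{OutdominatorExistence} from Lemma~\ref{DominatorExistence} by the standard arc-reversal trick, which is exactly what the author indicates. Let $T^{\mathrm{rev}}$ denote the tournament on $V(T)$ obtained by reversing every arc of $T$. Under this operation, in- and out-neighbourhoods are exchanged, so $d^{+}_{T^{\mathrm{rev}}}(v)=d^{-}_{T}(v)$ and vice versa; in particular large in-degree in $T^{\mathrm{rev}}$ is the same as large out-degree in $T$, and an in-dominating set in $T^{\mathrm{rev}}$ is an out-dominating set in $T$. A transitive subtournament with tail $a$ and head $b$ in $T^{\mathrm{rev}}$ is a transitive subtournament with head $a$ and tail $b$ in $T$.

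With this dictionary in place, I would simply apply Lemma~\ref{DominatorExistence} to $T^{\mathrm{rev}}$ with the same parameters $m,M,L,p$ and the same exceptional set $Y$ (noting that $|T^{\mathrm{rev}}|=|T|$ and $|Y|$ satisfies the same bound). This yields sets $A^1,A^2,A^3,A^4,E^-\subseteq V(T)$ and an exceptional set $X$ satisfying the six indominator axioms (D1)--(D6) in $T^{\mathrm{rev}}$, with $Y\cap (A^1\cup\dots\cup A^4)=\emptyset$, $Y\subseteq X$, $|X|\le L+|Y|$, and every vertex of $A^1$ having large in-degree in $T^{\mathrm{rev}}$.

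I would then set $B^i:=A^i$ for $i=1,2,3,4$ and $E^+:=E^-$, and verify the outdominator axioms in $T$ one by one. Disjointness (D1) and the size conditions (D3), (D4) are unchanged. For (D2), the transitive tournament on $A^i\cup A^{i+1}$ with tail in $A^i$ and head in $A^{i+1}$ in $T^{\mathrm{rev}}$ becomes a transitive tournament on $B^i\cup B^{i+1}$ with head in $B^i$ and tail in $B^{i+1}$ in $T$, exactly as required. For (D5), $A^2\cup A^3$ in-dominating $T^{\mathrm{rev}}\setminus(A^1\cup A^2\cup A^3\cup A^4\cup E^-\cup Y)$ translates to $B^2\cup B^3$ out-dominating the corresponding set in $T$. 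For (D6), $d^{+}_{T^{\mathrm{rev}}}(v)\ge p|E^-|$ for every $v\in E^-$ becomes $d^{-}_{T}(v)\ge p|E^+|$ for every $v\in E^+$. The statement that the indominator lives in $(T^{\mathrm{rev}}\setminus X)\cup V(D^-)$ is symmetric under reversal, so $X$ serves unchanged as the exceptional set of the outdominator in $T$. Finally, large in-degree in $T^{\mathrm{rev}}$ becomes large out-degree in $T$, giving the last condition on $B^1$.

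There is no real obstacle here; the only thing to be careful about is the bookkeeping on which of ``head/tail'' and ``in/out'' gets swapped in each of the six axioms, so that the resulting $5$-tuple literally satisfies the outdominator definition verbatim. Once the dictionary between $T$ and $T^{\mathrm{rev}}$ is written down explicitly, the verification is mechanical.
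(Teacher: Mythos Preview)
Your proposal is correct and is exactly the approach the paper takes: the lemma is stated as following from Lemma~\ref{DominatorExistence} by reversing all arcs, with no further proof given. Your explicit dictionary and axiom-by-axiom verification is more detailed than what the paper provides but amounts to the same argument.
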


Given an indominator $D^-$ in a tournament $T$, we will sometimes want to modify $T$, and still know that $D^-$ is an indominator in the modified tournament. If $D^-$ has exceptional set $X$, then from the definition of ``exceptional set,'' we see that removing any vertices of $X\setminus V(D^-)$ from $T$ will preserve $D^-$ being an indominator. Similarly, we can remove vertices of $X\cap A_1$ and $X\cap A_4$ to obtain a new indominator. Corresponding results hold for outdominators as well.

Given a dominator $D$ with exceptional set $X$, we will sometimes want to increase the size of $X$ and still know that $D$ is a dominator with the larger exceptional set. The following lemma allows us to do this under the assumption that $T$ has large degree.
\begin{lemma}\label{DominatorRobustness}
Let $T$ be a tournament of minimum  out-degree $\delta^+(T)$, and $D^-$ an $(m,M,p)$-indominator in $T$ with exceptional set $X$. For any $Y$ satisfying $X\subseteq Y$ and $2|Y|\leq \delta^+(T)$, $D^-$ is an $(m,M,p/2)$-indominator in $T$ with exceptional set $Y$.
\end{lemma}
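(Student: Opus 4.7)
The plan is to verify each of conditions (D1)--(D6) for the tuple $(A^1,A^2,A^3,A^4,E^-)$ in the new tournament $T':=(T\setminus Y)\cup V(D^-)$, now with parameter $p/2$ in place of $p$. Since the sets $A^1,\dots,A^4$ are not altered, conditions (D1)--(D4) carry over without any work. Condition (D5) also transfers easily: because $X\subseteq Y$, the set $T'\setminus(V(D^-)\cup E^-)$ is a subset of the set $((T\setminus X)\cup V(D^-))\setminus(V(D^-)\cup E^-)$ which $A^2\cup A^3$ in-dominates by hypothesis, so in-domination is preserved when we pass to the smaller ambient tournament.

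The only substantive check is (D6), and the key observation is that the hypothesis $2|Y|\leq\delta^+(T)$ is tailored precisely so that deleting $Y$ removes at most half of any vertex's out-neighbours. I would make this quantitative as follows. For any $v\in E^-$ still lying in $T'$, the original indominator property in $(T\setminus X)\cup V(D^-)$ gives $d^+_{(T\setminus X)\cup V(D^-)}(v)\geq p|E^-|$, and since this sub-tournament is contained in $T$, we inherit $d^+_T(v)\geq p|E^-|$. On the other hand the min-degree hypothesis gives $d^+_T(v)\geq\delta^+(T)\geq 2|Y|$, equivalently $|Y|\leq d^+_T(v)/2$. Passing from $T$ to $T'$ only removes the vertices of $Y\setminus V(D^-)$, so $v$ can lose at most $|Y|$ out-neighbours, whence
\[d^+_{T'}(v)\;\geq\;d^+_T(v)-|Y|\;\geq\;\tfrac12 d^+_T(v)\;\geq\;(p/2)|E^-|,\]
which is exactly (D6) with parameter $p/2$.

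I do not foresee a real obstacle here: the argument is essentially a bookkeeping check in which the min-degree hypothesis is used exactly once, to absorb the loss of out-edges caused by deleting the enlarged exceptional set. The one mild subtlety is that a vertex $v\in E^-$ lying in $Y\setminus V(D^-)$ is no longer a vertex of $T'$ at all, but such a vertex simply falls out of the scope of (D6) in the new tournament, so it causes no problem for the verification.
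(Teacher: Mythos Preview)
Your argument is correct and essentially identical to the paper's: both observe that (D1)--(D5) are immediate and then verify (D6) by combining $d^+(v)\geq p|E^-|$ with $d^+(v)\geq 2|Y|$ to conclude $d^+(v)-|Y|\geq (p/2)|E^-|$. The only cosmetic difference is that the paper averages the two lower bounds to get $d^+(v)\geq p|E^-|/2+|Y|$ before subtracting $|Y|$, whereas you chain the inequalities as $d^+_T(v)-|Y|\geq \tfrac12 d^+_T(v)\geq (p/2)|E^-|$; these are the same computation.
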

\begin{proof}
The only part of the definition of an indominator which needs checking is (D\ref{Indom:EExpansion}).
Let $E^-$ be the set of uncovered vertices of $D^-$ in $T\setminus X$, and $v$ be a vertex in $E^-$. We need to show that $|N^+(v)\setminus Y| \geq p|E^-\setminus Y|/2$.

Since $D^-$ is an $(m,M,p)$-indominator in $T$ with exceptional set $X$, we have $d^+(v)\geq p|E^-|$. We also have $d^+(v)\geq 2|Y|$. Averaging these gives $d^+(v)\geq p|E^-|/2+|Y|$. This implies the result
$$|N^+(v)\setminus Y| \geq d^+(v)-|Y|\geq p|E^-|/2\geq p|E^-\setminus Y|/2.$$
\end{proof}

By reversing arcs in the above lemma, we obtain the following version of it for outdominators
\begin{lemma}\label{OutdominatorRobustness}
Let $T$ be a tournament of minimum  in-degree $\delta^-(T)$, and $D^+$ an $(m,M,p)$-outdominator in $T$ with exceptional set $X$. For any $Y$ satisfying $X\subseteq Y$, and $2|Y|\leq \delta^-(T)$, $D^+$ is an $(m,M,p/2)$-outdominator in $T$ with exceptional set $Y$.
\end{lemma}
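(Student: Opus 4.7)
The plan is to deduce this lemma from Lemma~\ref{DominatorRobustness} by the standard arc-reversal duality that has been used throughout Section~3.2 (it is how Lemma~\ref{OutdominatorExistence} was obtained from Lemma~\ref{DominatorExistence}, and it is implicit in the way outdominators are defined as arc-reversed indominators). Let $T^R$ denote the tournament obtained from $T$ by reversing every arc. Under this operation, the hypotheses translate as follows: the $5$-tuple $D^+=(B^1,B^2,B^3,B^4,E^+)$, viewed in $T^R$, satisfies the definition of an $(m,M,p)$-indominator with exceptional set $X$, because each of axioms (D1)--(D6) for outdominators in $T$ is literally the corresponding axiom for indominators in $T^R$ (the head/tail labelling flips, out-domination becomes in-domination, and $d^-(v)$ in $T$ equals $d^+(v)$ in $T^R$). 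Furthermore $\delta^+(T^R)=\delta^-(T)\geq 2|Y|$.

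Having put ourselves in the hypotheses of Lemma~\ref{DominatorRobustness}, I would apply it in $T^R$ to $D^+$ and to the enlarged set $Y\supseteq X$. The conclusion is that $D^+$ is an $(m,M,p/2)$-indominator in $T^R$ with exceptional set $Y$. Reversing arcs back, this says exactly that $D^+$ is an $(m,M,p/2)$-outdominator in $T$ with exceptional set $Y$, which is what we want.

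An equivalent alternative is to mimic the proof of Lemma~\ref{DominatorRobustness} verbatim, swapping $N^+$ for $N^-$: only axiom (D\ref{Indom:EExpansion}) (in its outdominator form, concerning $d^-$) can fail when the exceptional set is enlarged, so I would fix $v\in E^+\setminus Y$, combine $d^-(v)\geq p|E^+|$ (from $D^+$ being an outdominator with exceptional set $X$) with $d^-(v)\geq 2|Y|$ (from $\delta^-(T)\geq 2|Y|$) by averaging to obtain $d^-(v)\geq p|E^+|/2+|Y|$, and then conclude $|N^-(v)\setminus Y|\geq d^-(v)-|Y|\geq p|E^+|/2\geq (p/2)|E^+\setminus Y|$.

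There is no real obstacle here; the lemma is a purely formal transposition of the preceding one. The only point that requires a moment of care is bookkeeping: one must check that the arc-reversal map sends the defining data of an outdominator (head/tail positions, direction of the transitive orientation between $B^i$ and $B^{i+1}$, the domination direction, and the sign of the degree in (D6)) to the defining data of an indominator in a label-preserving way, so that the same sets $B^1,\ldots,B^4,E^+$ and the same exceptional set $X$ work on the other side of the reversal.
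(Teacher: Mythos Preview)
Your proposal is correct and matches the paper's approach exactly: the paper states that this lemma is obtained by reversing arcs in Lemma~\ref{DominatorRobustness}, which is precisely the duality argument you spell out. Your alternative direct verification by swapping $N^+$ for $N^-$ is also valid and is essentially what the arc-reversal unpacks to.
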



\subsection{Connectors}
In order to construct our linking structures, we will need special gadgets which we call ``connectors''. Informally, a connector is a small set of vertices together with two coverings of it---one by four paths, and one by five.

\begin{definition}
A connector is any digraph $C$ on at most $40$ vertices and containing distinct vertices $x_1, \dots, x_5, y_1, \dots, y_5$ with the following property.  
For $n\in \{4,5\}$, there are vertex disjoint paths $P_1, \dots, P_n$ such $P_i$ is from $x_i$ to $y_i$, and $V(C)=V(P_1)\cup \dots\cup V(P_n)$.
\end{definition}
The vertices $x_1, \dots, x_5$ are the \emph{sources} of the connector and the vertices  $y_1, \dots, y_5$ are the \emph{sinks} of the connector.

One example of a connector is a transitive tournament $T$ on $10$ vertices, with vertex sequence $x_1, \dots, x_5, y_1, \dots, y_5$. Its easy to see that for $n= 4$ or $5$, we can find $n$ disjoint $x_i$~--~$y_i$ paths covering $T$.
For our purposes, we'll need slightly more complicated connectors.
The following lemma allows us to find a connector with prescribed sources and sinks under certain conditions.

\begin{lemma}\label{ConnectorExistence}
There is a constant $N=100 \cdot 2^{2^{2^{2^{10}}}}$ such that the following holds.
Let $T$ be a tournament on at least $200N$ vertices, $Y$ a set of vertices in $T$ with $|Y|\leq |T|/50$, and $\{x_1, \dots, x_{N}, y_1, \dots, y_{N}\}$ a set of $2N$ vertices in $T\setminus Y$ such that $x_1, \dots, x_{N}$ have large out-degree in $T$, and $y_1, \dots, y_{N}$ have large in-degree in $T$.
Then there is a connector $C$ contained in $T\setminus Y$, such that the sources of $C$ are in $\{x_1, \dots, x_{N}\}$, and the sinks in  $\{y_1, \dots, y_{N}\}$.
\end{lemma}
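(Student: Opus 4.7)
The plan is to construct a small combinatorial connector template $C_0$ and then embed it into $T \setminus Y$ using the rich structure afforded by the huge value of $N$.

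Let $C_0$ be the $15$-vertex digraph with vertex set $\{s_i, m_i, t_i\}_{i=1}^5$. Its $5$-cover is the five length-$2$ paths $P_i = s_i \to m_i \to t_i$, and its $4$-cover consists of $P'_i = s_i \to m_i \to t_i$ for $i = 1, 2, 3$ together with the six-vertex path $P'_4 = s_4 \to m_4 \to s_5 \to m_5 \to t_5 \to t_4$. Both covers partition $V(C_0)$ with the correct endpoints, so $C_0$ is a valid connector on $15 \le 40$ vertices. Realizing $C_0$ in $T \setminus Y$ requires, beyond the ten $5$-cover edges $\{s_i m_i, m_i t_i\}$, only the two junction edges $m_4 \to s_5$ and $t_5 \to t_4$.

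To embed $C_0$, first choose the extremal endpoints. By averaging in the sub-tournament on $\{x_1, \dots, x_N\}$, some source $s_5$ has in-degree at least $(N-1)/2$ within the source pool; hence $|N^-(s_5)| \ge (N-1)/2$, which is enormous. Pick $s_4$ to be any in-neighbor of $s_5$ inside the pool. Symmetrically, choose $t_5$ with out-degree at least $(N-1)/2$ inside the sink pool, and pick $t_4$ as an out-neighbor of $t_5$, thereby supplying the junction $t_5 \to t_4$ for free. Pick the remaining endpoints $s_1, s_2, s_3, t_1, t_2, t_3$ arbitrarily from the remaining pools. Then apply Lemma~\ref{LargeDegreeShortPath} greedily to build the five paths $P_i$: the lemma guarantees at least $|T|/25$ internally vertex-disjoint paths of length at most $3$ between each source--sink pair, of which all but $O(1)$ avoid previously-used vertices, so greedy selection succeeds. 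For $i = 4$, further constrain the last internal vertex of $P_4$ to lie in $N^-(s_5)$, yielding the junction $m_4 \to s_5$.

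The main obstacle is guaranteeing a middle vertex $m_4$ satisfying the three simultaneous edge constraints $s_4 \to m_4$, $m_4 \to t_4$, and $m_4 \to s_5$. A naive inclusion--exclusion bound on $|N^+(s_4) \cap N^-(t_4) \cap N^-(s_5)|$ can be negative, because the absolute in-degree of $s_5$ in $T$ need not be large. Resolving this uses the pool-averaging step forcing $|N^-(s_5)| \ge (N-1)/2$, together with iterated applications of Lemma~\ref{TransitiveSubtournament} to nested subsets of the source and sink pools to impose further useful orientations; if needed, $P_4$ is extended to length $3$, splitting the constraints across two intermediate vertices while keeping $|V(C)| \le 40$. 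The tower-of-exponentials magnitude of $N = 100 \cdot 2^{2^{2^{2^{10}}}}$ is consumed precisely by these iterated refinements.
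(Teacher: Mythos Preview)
Your template $C_0$ is a valid connector, and the opening moves of the embedding (choosing $t_5,t_4$ in the sink pool so that $t_5\to t_4$ holds, then greedily building five short $s_i$--$t_i$ paths via Lemma~\ref{LargeDegreeShortPath}) are fine. The gap is exactly at the point you yourself flag: securing the edge $m_4\to s_5$. The vertex $m_4$ is an \emph{internal} vertex of a short $s_4$--$t_4$ path; it lies in $N^+(s_4)\cap N^-(t_4)$ but is otherwise an arbitrary vertex of $T$, not a member of either pool. Your averaging argument only shows that $s_5$ has at least $(N-1)/2$ in-neighbours \emph{inside the source pool}; since $s_5$ has large out-degree, its total in-degree in $T$ can be essentially zero outside that pool, so $|N^-(s_5)|$ may be only about $N/2\approx |T|/400$. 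There is then no reason any of the $|T|/25$ internally-disjoint $s_4$--$t_4$ paths supplied by Lemma~\ref{LargeDegreeShortPath} has its last internal vertex in this small set. Iterating Lemma~\ref{TransitiveSubtournament} on the source and sink pools cannot help, because $m_4$ does not live in either pool; and ``splitting the constraint across two internal vertices'' on a length-$3$ path just relocates the same unfulfilled constraint to a different non-pool vertex. (Your side choice of $s_4\in N^-(s_5)$ is also idle: the template never uses an $s_4\to s_5$ edge.)

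The paper resolves this by applying Lemma~\ref{TransitiveSubtournament} not to the endpoint pools but to the \emph{internal vertices of the paths}, level by level. After finding $N/3$ vertex-disjoint $x_i$--$y_i$ paths of a common length $\ell\le 3$ (hence $\le 4$ vertices each), it applies Lemma~\ref{TransitiveSubtournament} to the set of first-position vertices, then within the surviving index set to the second-position vertices, and so on through all $\ell$ positions. This produces ten paths whose $j$th vertices form a transitive tournament for every $j$, and the connector is assembled from the heads and tails of those level-$j$ transitive tournaments. The four-fold iterated logarithm in $N=100\cdot 2^{2^{2^{2^{10}}}}$ is consumed precisely by these at-most-four level-by-level refinements, not by refinements of the source/sink pools. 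Your argument is missing this idea of controlling orientations among \emph{middle} vertices, and without it the required junction edge cannot be guaranteed.
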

\begin{proof}
Lemma~\ref{LargeDegreeShortPath} implies that for any $i=1,\dots,N$ there are at least $|T|/25$ internally vertex disjoint paths of length at most $3$ from $x_i$ to $y_i$.
 Since $|Y|\leq |T|/50$, there are at least $|T|/50$ such paths avoiding $Y$. 
Therefore, using $|T|\geq 200N$, we can choose vertex disjoint paths $P_1, \dots, P_{N}$ of length at most $3$ in $T\setminus Y$, such that $P_i$ is from $x_i$ to $y_i$.

Notice that at least a third of these paths must have the same length. Without loss of generality we can assume that the paths $P_1, \dots, P_{N/3}$ all have ${\ell}$ vertices.

For $i=1, \dots, N/3$, let $p_i^{1},\dots, p_i^{{\ell}}$ be the vertex sequence of $P_i$ (so we have $p_i^1=x_i$ and $p_i^{\ell}=y_i$). By Lemma~\ref{TransitiveSubtournament}, there is some subset $I_1\subseteq [N/3]$ with $|I_1|\geq \log_2 N/3$ such that the subtournament on $\{p_i^1:i\in I_1\}$ is transitive. Applying Lemma~\ref{TransitiveSubtournament} again, we find some subset $I_2\subseteq I_1$ with $|I_2|\geq\log_2 I_1$ such that the subtournaments on $\{p_i^2:i\in I_2\}$  and $\{p_i^1:i\in I_2\}$ are both transitive. Applying Lemma~\ref{TransitiveSubtournament} ${\ell}-2$ more times we obtain a subset $I_{\ell}\subseteq I_2$ with $|I_{\ell}|\geq\log_2 \log_2 \log_2 \log_2 N/3\geq 10$ such that the subtournaments on $\{p_i^j:i\in I_{\ell}\}$  are  transitive for $j=1,\dots,{\ell}$.
Without loss of generality, we can suppose that $I_{\ell}$ contains the set $\{1,\dots, 10\}$.

For each $j=1, \dots, {\ell}$, we define a subtournament $T_j$, vertices $h_j$ and $t_j$, and two paths $P_h^j$ and $P_t^j$ as follows:
Let $T_1$ be the subtournament of $T$ on vertices $V(P_1)\cup\dots\cup V(P_{10})$. Let $h_1$ and $t_1$ be the head and tails respectively of the transitive tournament on $\{p_i^1:1\leq i\leq 10\}$. Let $P_h^1$ and $P^1_t$ be the $P_i$-paths containing $h_1$ and  $t_1$ respectively. 
Then, for $j=2,\dots, {\ell}$, let $T_j= T_{j-1}\setminus (P_h^{j-1}\cup P_t^{j-1})$. Let $h_j$ and $t_j$ be the head and tails respectively of the transitive tournament on $T_j\cap \{p_i^j:1\leq i\leq 10\}$. Let $P_h^j$ and $P^j_t$ be the $P_i$-paths containing $h_j$ and $t_j$ respectively. 

For $j=1,\dots, {\ell}$, let $P'^j_h$ be the final segment of the path $P_h^j$ starting from $h_j$, and let $P'^j_t$ be the initial segment of the path $P_t^j$ ending at $t_j$.

We can now define the connector $C$. Let $C$ the the subtournament of $T$ on the vertices $\left(T_1\setminus \bigcup_{j=1}^{\ell} P_h^j\cup P_t^j\right)\cup \left( \bigcup_{j=1}^{\ell} P'^j_h\cup P'^j_t\right)$.  In other words $C$ is the tournament on the vertices of $T_1$ with all the paths $P_h^j$ and  $P_t^j$ removed, but then with the initial and final segments $P'^j_h$ and  $P'^j_t$ added back in.  Notice that we have $|C|\leq 40$. Let $y'_1, \dots, y'_{\ell}$ be the ends of the paths $P'^1_h, \dots, P'^{\ell}_h$. Let $x'_1, \dots, x'_{\ell}$ be the starts of the paths $P'^1_t, \dots, P'^{\ell}_t$. Notice that since ${\ell}\leq 4$, there must be at least $5-{\ell}$ paths in $\{P_1,\dots ,P_{10}\}$ which are distinct from $P_h'^1\dots,P_h'^{\ell}, P_t'^1\dots,P_t'^{\ell}$.  Let $x'_{{\ell}+1},\dots, x'_5$ be the starts of any choice of such paths. Let $y'_{{\ell}+1},\dots, y'_5$ be the ends of the paths containing   $x'_{{\ell}+1},\dots, x'_5$. 

We claim that $C$ is a connector with sources $x'_1, \dots, x'_5$ and sinks $y'_1, \dots, y'_5$.  To see this, let $n = 4$ or $5$. For $i={\ell}+1, {\ell}+2, \dots, n$, let $P'_{i}$ be the path between $x'_i$ and $y'_i$ (which is one of the paths in $\{P_1,\dots ,P_{10}\}$). For $i=1,\dots, {\ell}$, let $R_i$ be a path from $t_i$ to $h_i$ consisting of all the vertices in $V(C)\cap\{p_t^i:1\leq t\leq 10\}\setminus \big(V(P'_{{\ell}+1})\cup\dots\cup V(P'_{n})\big)$ (such a path exists because $V(C)\cap \{p_t^i:1\leq t\leq 10\}$ is a transitive tournament with head $h_i$ and tail $t_i$). For $i=1,\dots, {\ell}$, let $P'_i$ be the path formed by joining $P'^i_h$ to $R_i$ to $P'^i_t$. Now, we have that for each $i=1,\dots, n$, $P'_i$ goes from $x'_i$ to $y'_i$, and $V(C)=V(P'_1)\cup \dots\cup V(P'_n)$ as required.
\end{proof}

\subsection{Definition of linkers}
Here we define our linkage structures.
\begin{definition}\label{LinkerDefinition}
A $t$-linker $L$ in $T$ consists of $t$ $(8,8,8)$-indominators $D^-_1,$ $\dots,$ $D^-_t$ with $D^-_i=(A_i^1,$ $A_i^2,$ $A_i^3,$ $A_i^4,$ $E^-_i)$, $t$ $(8,8,8)$-outdominators $D^+_1, \dots, D^+_t$ with $D^+_i=(B_i^1,$ $B_i^2,$  $B_i^3,$ $B_i^4,$ $E^+_i)$, $t$ connectors $C_1,$ $\dots$ $C_{t}$,  $5t$ directed paths $Q_1, \dots, Q_{5t}$, and a set $X$ which have the following properties.
\begin{enumerate} [(L1)]
\item  The indominators $D^-_1, \dots, D^-_t$, outdominators $D^+_1, \dots, D^+_t$, paths $Q_1, \dots, Q_{5t},$ and connectors,  $C_1, \dotsm C_{t}$ are all vertex disjoint.
\item The indominators $D^-_1, \dots, D^-_t$ and outdominators $D^+_1, \dots, D^+_t$ all have the common exceptional set $X$. We have $V(D^-_i), V(D^+_i), V(C_i) \subseteq X$ for all $i=1, \dots, t$.  
\item We have $|E^-_1|\geq |E^-_2|\geq \dots \geq |E^-_t|$ and $|E^+_1|\geq |E^+_2|\geq \dots \geq |E^+_t|$.
\item \label{UncoveredBiggerLinker} Either $E^-_t\geq E^+_1$ or $E^+_t\geq E^-_1$ holds.
\item \label{LinkerEdges} For all $i$ and $j$, the following directed edges are present:
\begin{itemize}
\item Every edge from the sinks of $C_j$ to any vertex in $A_i^1$.
\item Every edge from the any vertex in $B_i^1$ to the sources of $C_j$.
\item Every edge from the any vertex in $A_i^4$ to the start of $Q_j$.
\item Every edge from the end of $Q_j$ to any vertex in $B_i^4$.
\end{itemize}
\end{enumerate}
\end{definition}
The \emph{vertices of $L$} are $V(L)=\left(\bigcup_{i=1}^{5t}V(Q_i) \right)\cup \left(\bigcup_{i}^t V(D^-_i)\cup V(D^+_i)\cup V(C_i)\right)$.
The vertices inside $\bigcup_{i}^t V(D^-_i)\cup V(D^+_i)\cup V(C_i)$ are called the \emph{essential vertices} of the $t$-linker. The vertices in $Q_1, \dots, Q_{5t}$ are called the \emph{path vertices} of the $t$-linker.
The \emph{edges of $L$} are all the edges contained in the dominators $D^-_i$, $D^+_i$, connectors $C_i$, paths $Q_i$, as well as all the edges mentioned in (L\ref{LinkerEdges}).

The set $X$ is called the exceptional set of the $t$-linker. 
Notice that if $L$ is a linker in $T$ with exceptional set $X$, then removing any vertices of $X\setminus V(L)$ from $T$ produces a new tournament $T'$ where $L$ is still a linker with exceptional set $X\cap T'$.

It is worth noticing that if $L$ is a $t$-linker in a tournament $T$, then it will also be a $t$-linker in the tournament $T^{op}$ produced from $T$ by reversing all arcs (where we also exchange the roles of the indominators and outdominators in $L$). This will be useful since it allows us to assume that $E^-_t\geq E^+_1$  occurs in (L\ref{UncoveredBiggerLinker}), as long as we are only working with one linker in a tournament.

See Figure~\ref{LinkerFigure} for an illustration of a linker.

\begin{figure}
  \centering
     \includegraphics[width=1.0\textwidth]{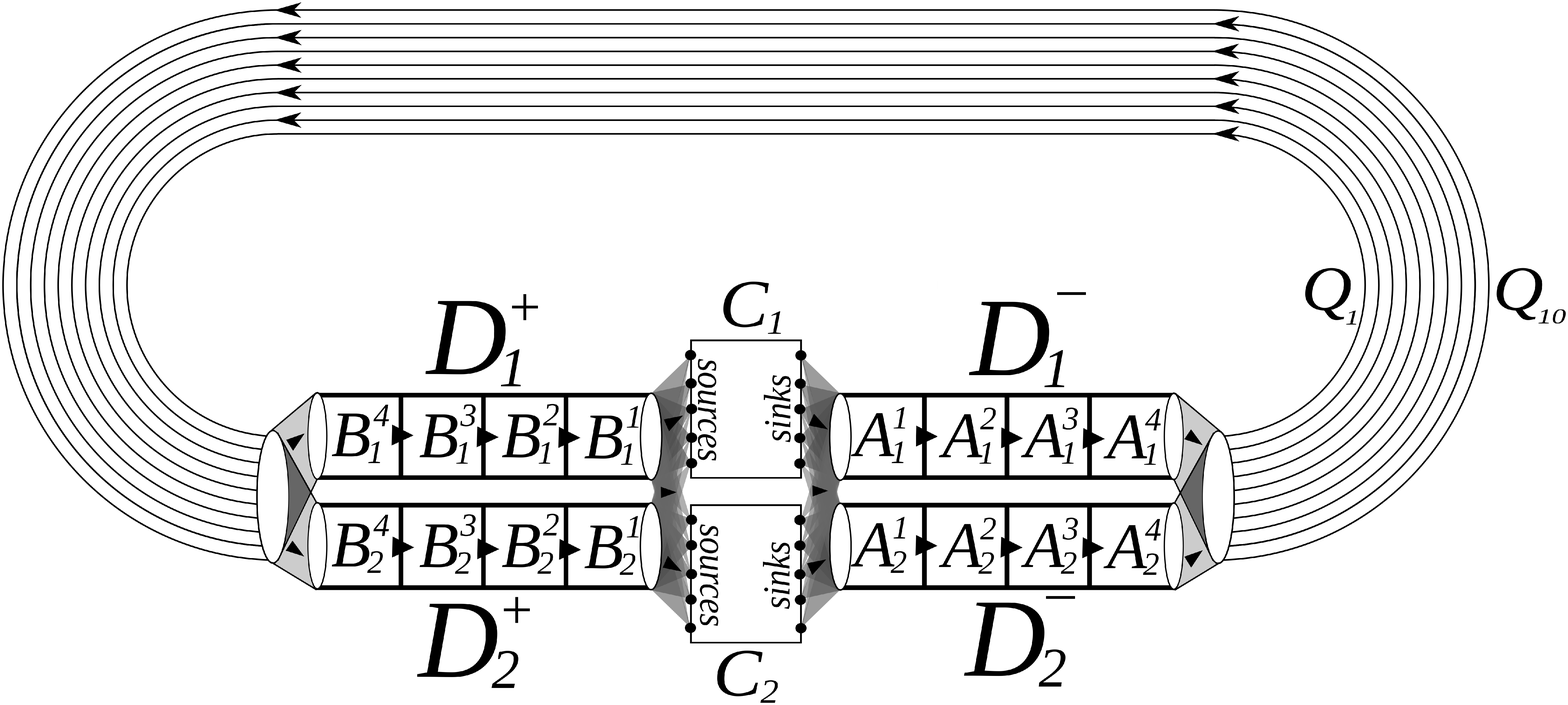}
  \caption{A $2$-linker \label{LinkerFigure}}
\end{figure}

\subsection{Construction of linkers}
The aim of this section is to show that for every $t$, there is a constant $C_0=C_0(t)$ such that every $C_0k$-connected tournament contains $k$ vertex-disjoint $t$-linkers.

The following lemma will be used in our construction of linkers in order to ensure that~(\ref{LinkerEdges}) holds.
\begin{lemma}\label{RamseyLemma}
For all $m, t, {\ell} \in \N$ there exist $R(m,t,{\ell})\in \N$ such that the following holds.
Suppose that $A_1, \dots, A_{R}$ are disjoint sets of vertices of order $2m$ in a tournament~$T$. Then we can choose disjoint sets $I, J \subseteq [R]$, subsets $A'_i\subseteq A_i$ for all $i\in I$, and vertices $v_j\in A_j$ for all $j\in J$ with the following properties.
\begin{itemize}
\item We have $|I|=t$ and $|J|={\ell}$, and $|A_i|=m$ for all $i$. 
\item For all $i,j$, all the edges between $A_i$ and $v_j$ are directed from $A_i$ to $v_j$.
\end{itemize}
\end{lemma}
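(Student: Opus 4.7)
My plan is to apply Ramsey's theorem for pairs to the indices $\{1,\dots,R\}$, coloured by the bipartite tournament structure between pairs of sets, and then extract the desired configuration by a pigeonhole argument on the common bipartite pattern.

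Fix, once and for all, an arbitrary labelling $A_i=\{a_{i,1},\dots,a_{i,2m}\}$ of each set. For each ordered pair $(i,j)$ with $i<j$, define a colour $c_{ij}:[2m]\times[2m]\to\{+,-\}$ by letting $c_{ij}(p,q)=+$ iff the edge between $a_{i,p}$ and $a_{j,q}$ is directed from $a_{i,p}$ to $a_{j,q}$. There are $2^{4m^2}$ possible colours, a finite number depending only on $m$. By Ramsey's theorem for pairs with this many colours, there is a number $R(m,t,\ell)$ such that whenever $R\ge R(m,t,\ell)$ one can find indices $\sigma_1<\sigma_2<\dots<\sigma_s$ with $s=t+\ell$ and a single pattern $c:[2m]\times[2m]\to\{+,-\}$ so that $c_{\sigma_a\sigma_b}=c$ for all $a<b$.

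Now I pigeonhole on the common pattern $c$. Either $|c^{-1}(+)|\ge 2m^2$ or $|c^{-1}(-)|\ge 2m^2$. Suppose first that $|c^{-1}(+)|\ge 2m^2$; summing over columns gives $\sum_{q=1}^{2m}|\{p:c(p,q)=+\}|\ge 2m^2$, so some $q^*\in[2m]$ has $|\{p:c(p,q^*)=+\}|\ge m$. Fix any size-$m$ subset $S$ of this set, and define $I=\{\sigma_1,\dots,\sigma_t\}$, $J=\{\sigma_{t+1},\dots,\sigma_{t+\ell}\}$, $A'_i=\{a_{i,p}:p\in S\}$ for each $i\in I$, and $v_j=a_{j,q^*}$ for each $j\in J$. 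Then $|A'_i|=m$, and for each pair $(i,j)\in I\times J$ we have $i<j$, so by the uniformity of $c$ every $a_{i,p}\in A'_i$ satisfies $c(p,q^*)=+$, i.e.\ points to $v_j=a_{j,q^*}$, as required. The case $|c^{-1}(-)|\ge 2m^2$ is entirely symmetric: pigeonhole on rows yields $p^*$ with $|\{q:c(p^*,q)=-\}|\ge m$, and one takes instead $J=\{\sigma_1,\dots,\sigma_\ell\}$ with $v_j=a_{j,p^*}$, and $I=\{\sigma_{\ell+1},\dots,\sigma_{\ell+t}\}$ with each $A'_i$ consisting of $m$ vertices indexed by the chosen set of $q$'s.

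There is no real obstacle here: the main ingredient is Ramsey's theorem for pairs with $2^{4m^2}$ colours, which automatically supplies $R(m,t,\ell)$. The only care needed is to split into the two orientation cases and pick $I$ before $J$ (or $J$ before $I$) in the order coming from Ramsey, so that every $(i,j)\in I\times J$ sits on the "source-to-sink" side of the common bipartite pattern.
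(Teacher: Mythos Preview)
Your proof is correct and follows the same overall strategy as the paper --- apply Ramsey's theorem to the index set with colours recording the bipartite structure between pairs of blocks --- but the details differ in an instructive way.

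The paper does not record the full $2^{4m^2}$-valued bipartite pattern. Instead, for each pair $\{i,j\}$ it first observes that some vertex in $A_i\cup A_j$ has at least $m$ in-neighbours on the other side, fixes one such vertex $v_{i,j}$ together with a size-$m$ in-neighbourhood $N_{i,j}$, and then colours the pair by the index of $v_{i,j}$ and the index set of $N_{i,j}$ (only $2m\binom{2m}{m}$ colours). It also orients the pair $\{i,j\}$ according to which side contains $v_{i,j}$, so the monochromatic set it obtains is still a tournament; a further pass with Lemma~\ref{TransitiveSubtournament} is then needed to extract a transitive subtournament of size $t+\ell$, whose head-to-tail order determines $I$ and $J$.

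Your version trades a larger colour palette for a simpler extraction: by recording the entire pattern and keeping the natural order on indices, you replace the paper's ``orient, then find a transitive subtournament'' step with a single pigeonhole on the dominant sign of the common pattern and a choice of whether $I$ precedes or follows $J$. The paper's route gives a smaller number of colours (hence in principle a better dependence on $m$), while yours avoids the auxiliary tournament and the transitive-subtournament lemma altogether. Both yield an explicit $R(m,t,\ell)$, and neither bound is optimised.
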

\begin{proof}
If $R_k(n)$ denotes the $k$-colour Ramsey number,  let $R=R(m,t,{\ell})=R_{2m \binom{2m}{m}} \left(2^{t+{\ell}}\right)$.

For each $i$, let the vertices of $A_i$ be called $a_i^1, \dots, a_i^{2m}$. Notice that for any $i\neq j$ there must be a vertex in either $A_i$ or $A_j$ which has in-degree at least $m$ in the bipartite digraph between $A_i$ and $A_j$. For every $i,j$ choose one such vertex, which we call $v_{i,j}$ and let $N_{i,j}$ be some particular subset of order $m$ of the in-neighbourhood of $v_{i,j}$ in the bipartite digraph between $A_i$ and $A_j$.  

We define a coloured tournament $S$ whose vertex set is $\{1, \dots, R\}$. The edge between $i$ and $j$ in $S$ is directed $ij$  if $v_{i,j}\in A_j$ holds and $ji$ if  $v_{i,j}\in A_i$ holds. In addition we give each edge in $S$ one of $2m \binom{2m}{m}$ colours which are indexed by the set $[2m]\times \binom{[2m]}{m}$. We let the edge $ij$ have colour $(t,X)$ if $v_{i,j}=a_j^t$ and $N_{i,j}=\{a_i^x:x\in X\}$. 

By Ramsey's Theorem, combined with Lemma~\ref{TransitiveSubtournament}, there is a monochromatic transitive subtournament $S'$ of $S$ on $t+\ell$ vertices.  Let $x_1, x_2, \dots, x_{t+\ell}$ be the vertex sequence of $S'$ in the order from head to tail. Let $I=\{x_1, \dots, x_t\}$ and $J=\{x_{t+1}, \dots, x_{t+\ell}\}$. The edges in $S'$ all have the same colour $(t,X)\in [2m]\times \binom{[2m]}{m}$. For each $i\in I$ we let $A'_i=\{a_i^x:x\in X\}$, and for each $j\in J$ we let $v_j=v_j^t$. By the definition of the coloured tournament $S$, this choice of $I$, $J$, $A'_i$s, and $v_j$s satisfy all the conditions of the lemma.
\end{proof}

By reversing arcs in the above lemma, we get the following.
\begin{lemma}\label{RamseyLemmaReverse}
For all $m, t, \ell \in \N$ there exist $R(m,t,l)\in \N$ such that the following holds.
Suppose that $A_1, \dots, A_{R}$ are disjoint sets of vertices of order $2m$ in a tournament~$T$. Then we can choose disjoint sets $I, J \subseteq [R]$, subsets $A'_i\subseteq A_i$ for all $i\in I$, and vertices $v_j\in A_j$ for all $j\in J$ with the following properties.
\begin{itemize}
\item We have $|I|=t$ and $|J|=\ell$, and $|A_i|=m$ for all $i$. 
\item For all $i,j$, all the edges between $A_i$ and $v_j$ are directed from $v_j$ to $A_i$.
\end{itemize}
\end{lemma}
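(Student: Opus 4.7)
The statement is the formal dual of Lemma~\ref{RamseyLemma}, and the author explicitly flags the proof as obtained by reversing arcs, so the plan is essentially a one-line reduction. I would take the same constant $R(m,t,\ell)$ produced by Lemma~\ref{RamseyLemma}, form the reverse tournament $T^{op}$ on the same vertex set (with every edge $uv$ replaced by $vu$), and apply Lemma~\ref{RamseyLemma} to $T^{op}$ with the very same disjoint sets $A_1,\dots,A_R$, which have not changed.

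Lemma~\ref{RamseyLemma} applied in $T^{op}$ produces disjoint index sets $I,J\subseteq [R]$ with $|I|=t$, $|J|=\ell$, subsets $A'_i\subseteq A_i$ of size $m$ for each $i\in I$, and a vertex $v_j\in A_j$ for each $j\in J$, such that in $T^{op}$ every edge between $A'_i$ and $v_j$ is oriented from $A'_i$ to $v_j$. Translating this back to $T$, every edge between $A'_i$ and $v_j$ in $T$ is oriented from $v_j$ to $A'_i$, which is exactly the conclusion of Lemma~\ref{RamseyLemmaReverse}. The size conditions on $I$, $J$, and the $A'_i$ transfer verbatim, so no additional accounting is needed.

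There is no real obstacle: the construction of $T^{op}$ preserves the vertex set and the collection $\{A_1,\dots,A_R\}$, so the hypotheses of Lemma~\ref{RamseyLemma} are satisfied in $T^{op}$ as soon as they are satisfied in $T$, and the same $R(m,t,\ell)$ works. The only thing worth remarking in the write-up is that the set system and its cardinalities are invariant under arc reversal, so one really does get the identical quantitative bound.
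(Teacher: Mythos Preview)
Your proposal is correct and is exactly the paper's approach: the paper states the lemma and simply remarks that it follows by reversing arcs in Lemma~\ref{RamseyLemma}, which is precisely the reduction you describe.
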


The following technical lemma allows us to find a single $t$-linker in a tournament assuming that we have many disjoint in and out-dominators with paths between them.
\begin{lemma}\label{SingleLinkerConstruction}
For every $t\in \mathbb{N}$, there is a constant $R_0=R_0(t)$ such that the following holds.
Suppose  that $T$ is a tournament, $X$ and $Z$ are subsets of $V(T)$, and $H_1, \dots, H_{R_0}$ are vertex disjoint subdigraphs of $T$ with the following properties.
\begin{enumerate}[(i)]
\item $H_i$ consists of:
\begin{itemize}
\item An $(8,16,16)$-indominator $D^-_i=(A_i^1,A_i^2,A_i^3,A_i^4, E^-_i)$ of $T$ with exceptional set~$X$.
\item An $(8,16,16)$-outdominator $D^+_i=(B_i^1,B_i^2,B_i^3,B_i^4, E^+_i)$ of $T$ with exceptional set~$X$.
\item A path $Q_i$ from the head of $D^-_i$ to the tail of $D^+_i$.
\end{itemize}
In addition, $D^-_i$, $D^+_i$, and the internal vertices of $Q_i$ are all vertex disjoint for each~$i$.
\item All vertices in  $A_i^1$ have large in-degree in $T$. All vertices in  $B_i^1$ have large out-degree in $T$.
\item $V(D^-_i), V(D^+_i) \subseteq  X$ and $V(H_i) \subseteq  Z$ hold for all $i$. 
\item $|Z|\leq |T|/50- 40t$.
\item $2(|X|+40t)\leq \min(\delta^+(T), \delta^-(T))$.
\end{enumerate}
Then  there is a set $S$ with $|S|\leq 40t$ and $S\cap Z=\emptyset$, and a $t$-linker $L$ with exceptional set $X\cup S$ whose vertices are contained in the hypergraphs $H_1, \dots, H_{R_0}$ plus $S$. In addition, for any $Y$ containing $X\cup S$ with $2|Y|\leq \min(\delta^+(T),\delta^-(T))$, $L$ is also a $t$-linker in $T$ with exceptional set $Y$.
\end{lemma}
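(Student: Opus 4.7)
The plan is to treat the $R_0$ hypergraphs as a resource: $t$ of them will supply the indominators and outdominators of the final $t$-linker (after a small surgery cutting $A^1,A^4,B^1,B^4$ from size $16$ down to size $8$), while the remaining hypergraphs will supply, from their own $A^1,B^1,A^4,B^4$ sets, the ``extra'' vertices that populate the connector sink and source pools and that serve as start/end vertices for the paths $Q_j$. The edge orientations demanded by (L5) will be arranged by four successive applications of Lemmas~\ref{RamseyLemma} and~\ref{RamseyLemmaReverse}.

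First I would iterate the Ramsey-type lemmas four times, once per direction of edges in (L5). Applying Lemma~\ref{RamseyLemmaReverse} to the $R_0$ sets $A_i^1$ keeps a large subset of indices, each with a size-$8$ subset $A'^1_i\subset A_i^1$, and extracts singletons $v_j$ from other $A_j^1$'s that dominate every chosen $A'^1_i$; these $v_j$'s will populate the sink pool $V_1$ of the connectors. Analogous applications to the $B^1_i$'s, $A^4_i$'s, and $B^4_i$'s produce pools $V_2$ of connector sources, $V_3$ of path starts, and $V_4$ of path ends, each with the orientation required in (L5). Each Ramsey application cuts the candidate index pool by a factor controlled by its two output parameters, so choosing $R_0$ to be a tower of four Ramsey numbers (depending only on $t$, the constant $N$ of Lemma~\ref{ConnectorExistence}, and some absolute constants) leaves at least $2t$ candidate hypergraphs together with $|V_1|,|V_2|\geq tN$ and $|V_3|,|V_4|\geq 10t$.

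Next I would impose (L4) by splitting the kept candidates between indominator-suppliers and outdominator-suppliers. Sort them by $|E^-_i|$ and by $|E^+_i|$ separately, and consider Option~A (use the $t$ candidates with the largest $|E^-|$ as indominator-suppliers and the $t$ candidates with the smallest $|E^+|$ as outdominator-suppliers) versus Option~B (the mirror choice). A short sort-order pigeonhole — if both options failed then the median $|E^-|$ value would be strictly less than the median $|E^+|$ value and vice versa, a contradiction — shows that at least one option yields $\min|E^-_{\mathrm{chosen}}|\geq\max|E^+_{\mathrm{chosen}}|$, which is exactly (L4). On the chosen indominator-suppliers I then do the surgery turning each $(8,16,16)$-indominator into a $(8,8,8)$-indominator using the Ramsey-provided $8$-vertex subsets: the cut $A^4$-vertices are already in $X$ by hypothesis, the cut $A^1$-vertices are still in-dominated by $A^2$ because $A^1\to A^2$ inside the original transitive tournament, and a direct degree count (the sub-tournament loses at most $16$ vertices, against a starting bound $d^+(v)\geq 16|E^-|$) preserves $d^+(v)\geq 8|E^-|$; the edge case $|E^-|\leq 1$ is handled by absorbing $E^-$ into the exceptional set so that the new uncovered set is empty and (D\ref{Indom:EExpansion}) becomes vacuous. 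The symmetric surgery produces the $(8,8,8)$-outdominators from the outdominator-suppliers.

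Finally, I would build the $t$ connectors by $t$ successive applications of Lemma~\ref{ConnectorExistence} to fresh $N$-subsets of $V_1$ and $V_2$, with the lemma's $Y$ parameter set to exclude everything already used; the $\leq 40$ vertices of each connector that happen to lie outside $Z$ accumulate into a set $S$ with $|S|\leq 40t$ and $S\cap Z=\emptyset$. Each path $Q_j$ can be taken to be a single edge $s_j\to e_j$ with $s_j\in V_3$ and $e_j\in V_4$, found by a greedy matching in the bipartite tournament on $V_3\cup V_4$ where at least half the edges go the correct way. Properties (L1)--(L3) and (L5) then hold by construction and the Ramsey selection, (L4) by the option-A/B argument, and the ``in addition'' clause follows from one application each of Lemmas~\ref{DominatorRobustness} and~\ref{OutdominatorRobustness}, since the surgery was done with enough slack in the $p$-parameter that a further halving still leaves $(8,8,8)$-dominators. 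The main obstacle will be coordinating the four Ramsey reductions with the (L4) selection while simultaneously keeping track of the exceptional set through the surgery; once these are aligned the remainder is a careful but essentially mechanical assembly.
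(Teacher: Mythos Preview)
Your overall architecture---four Ramsey reductions to arrange (L5), then connectors from Lemma~\ref{ConnectorExistence}, then robustness for the ``in addition'' clause---matches the paper. But the step where you manufacture the $5t$ paths $Q_j$ is a genuine gap. You propose taking each $Q_j$ to be a single edge $s_j\to e_j$ with $s_j\in V_3$ and $e_j\in V_4$, justified by ``at least half the edges go the correct way'' in the bipartite tournament between $V_3$ and $V_4$. That claim is false: nothing prevents \emph{every} edge between $V_3$ and $V_4$ from pointing the wrong way, since $V_3$ and $V_4$ come from completely unrelated index sets produced by separate Ramsey applications. No greedy matching argument can extract directed $V_3\to V_4$ edges that may simply not exist.

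The paper resolves this by not building the $Q_j$ from scratch at all: it uses the paths $Q_j$ that are already present in the hypergraphs $H_j$. The trick is to nest the two ``$A^4/B^4$'' Ramsey applications so that the path-start singletons $v^-_j\in A_j^4$ and the path-end singletons $v^+_j\in B_j^4$ are drawn from the \emph{same} index $j$. Concretely, apply Lemma~\ref{RamseyLemma} to $\{A^4_i\}$ to get $I_1,J_1$; then apply Lemma~\ref{RamseyLemmaReverse} to $\{B^4_j:j\in J_1\}$ to get $I_2,J_2\subseteq J_1$. For each $j\in J_2$ the transitive structure of $A_j^4$ and $B_j^4$ lets you extend the given path $Q_j$ (which already runs from the head of $D^-_j$ to the tail of $D^+_j$) to a path $Q'_j$ from $v^-_j$ to $v^+_j$. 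The remaining two Ramsey applications (to $\{B^1_i:i\in I_2\}$ and $\{A^1_i:i\in I_1\}$) then operate on the two \emph{disjoint} pools $I_2\subseteq J_1$ and $I_1$, which also automatically separates the indominator-suppliers from the outdominator-suppliers and makes the (L4) selection a clean ``half of $A$ versus half of $B$'' argument on two independent sets of $2t$ numbers each. Once you restructure the Ramsey nesting this way, the rest of your outline goes through essentially as written.
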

\begin{proof}
Let $N$ be the constant from Lemma~\ref{ConnectorExistence}. Let $R(m,t,\ell)$ be the function given by Lemma~\ref{RamseyLemma}. We fix the following constants for the proof.
\begin{align*}
R^J_3&=N+40t,  &R^J_4&= N+40t, \\
R^J_2&=5t,  &R^I_5&= R^I_6=t, \\ 
R^I_3&=2R^I_5,   &R^I_4&=2R^I_6, \\
R^I_2 &= R(8, R^I_3, R^J_3), 
&R^I_1 &= R(8, R^I_4, R^J_4), \\  
R^J_1 &= R(8, R^I_2, R^J_2),  
&R_0 &= R(8, R^I_1, R^J_1). 
\end{align*}

Apply Lemma~\ref{RamseyLemma} to the family $\{A_1^4, \dots, A_{R_0}^4\}$ in order to find disjoint sets $I_1$ and $J_1$ such that $|I_1|= R^I_1$, $|J_1|=R^J_1$, and also for each $i\in I_1$  there is an $A'^4_i\subseteq A^4_i$ of order $8$, and for each $j\in J_1$ there is a vertex $v^-_j\in A_j^4$. In addition we have all the edges going from $A'^4_i$ to $v^-_j$ for any $i\in I_1$ and $j\in J_1$. 

Apply Lemma~\ref{RamseyLemmaReverse} to the family $\{B_j^4: j\in J_1\}$ in order to find disjoint sets $I_2$ and $J_2$ contained in $J_1$ such that  $|I_2|= R^I_2$, $|J_2|=R^J_2$, and also for each $i\in I_2$  there is an $B'^4_i\subseteq B^4_i$ of order $8$, and for each $j\in J_2$ there is a vertex $v^+_j\in B_j^4$. In addition we have all the edges going from $v^+_j$ to $B'^4_i$ for any $i\in I_2$ and $j\in J_2$. 

Notice that for each $j\in J_2$ the fact that $A_j^4$ is a transitive tournament implies that there is a path from $v^-_j$ to the start of $Q_j$. Similarly, there is a path in $B_j^4$ from the end of $Q_j$ to $v^+_j$. Joining these two paths to $Q_j$, we obtain a path $Q'_j$ from $v^-_j$ to $v^+_j$ consisting of $Q_j$ and some extra vertices in $A_j^4$ and $B_j^4$.

Apply Lemma~\ref{RamseyLemma}  to the family $\{B_i^1: i\in I_2\}$  in order to find disjoint sets $I_3$ and $J_3$ contained in $I_2$ such that  $|I_3|= R^I_3$, $|J_3|=R^J_3$, and also for each $i\in I_3$  there is an $B'^1_i\subseteq B^1_i$ of order $8$, and for each $j\in J_3$ there is a vertex $u^+_j\in B_j^1$. In addition we have all the edges going from $B'^1_i$ to $u^+_j$ for any $i\in I_3$ and $j\in J_3$. 

Apply Lemma~\ref{RamseyLemmaReverse}  to the family $\{A_i^1: i\in I_1\}$  in order to find disjoint sets $I_4$ and $J_4$ contained in $I_1$ such that  $|I_4|= R^I_4$, $|J_4|=R^J_4$, and also for each $i\in I_4$  there is an $A'^1_i\subseteq A^1_i$ of order $8$, and for each $j\in J_4$ there is a vertex $u^-_j\in A_j^1$. In addition we have all the edges going from $u^-_j$  to $A'^1_i$ for any $i\in I_4$ and $j\in J_4$. 

Recall that for all  $i$, vertices in $A_i^1$ have large in-degree and vertices in $B_i^1$ have large out-degree. In particular this means that $u^-_j$ always has large in-degree and $u^+_j$ always has large out-degree. 
Therefore since $|J_3|, |J_4|\geq N+40t$, we can apply Lemma~\ref{ConnectorExistence} to $T$ with $\{x_1,\dots, x_N\}=\{u^+_j:j\in J_3\}$ and $\{y_1,\dots, y_N\}=\{u^-_j:j\in J_4\}$ in order to find $t$ disjoint connectors $C_1, \dots, C_t$ whose sources are in  $\{u^+_j:j\in J_3\}$ and whose sinks are in $\{u^-_j:j\in J_4\}$ (at each application of Lemma~\ref{ConnectorExistence} we let $Y$ be $Z\setminus \{x_1,\dots, x_N, y_1,\dots, y_N\}$ together with the vertices of the previously constructed connectors. Condition (iv) ensures that $|Y|\leq |T|/50$ as required by Lemma~\ref{ConnectorExistence}.) We let $S=(V(C_1)\cup\dots\cup V(C_t))\setminus Z$.

Notice that for two sets of numbers $A$ and $B$, either half of the numbers in $A$ are larger than half of the numbers of $B$, or half of the numbers of $B$ are larger than half of the numbers of $A$. Applying this with $A=\{|E^-_i|:i\in I_4\}$ and $B=\{|E^+_i|:i\in I_3\}$ gives us two subsets $I_5\subseteq I_3$ and $I_6\subseteq I_4$ with $|I_5|=R^I_5$ and $|I_6|=R^I_6$ such that we either have $|E^-_i|\geq |E^+_j|$ for all $i\in I_6, j\in I_5$, or $|E^-_i|\leq |E^+_j|$ for all $i\in I_6, j\in I_5$.

Now we have everything set up to define our $t$-linker. 
\begin{itemize}
\item The indominators of $L$ are given by $D'^-_i=(A'^1_i,A_i^2,A_i^3,A'^4_i, E^-_i)$ for $i\in I_6$. We reorder these indominators such that $|E^-_1|\geq |E^-_2|\geq \dots \geq |E^-_t|$  holds. 
\item The outdominators of $L$ are given by $D'^+_i=(B'^1_i,B_i^2,B_i^3,B'^4_i, E^+_i)$ for $i\in I_5$. We reorder these outdominators such that  $|E^+_1|\geq |E^+_2|\geq \dots \geq |E^+_t|$ holds.
\item The connectors of $L$ are $C_1, \dots, C_t$.
\item The paths of $L$ are given by $Q'_j$ for $j \in J'_2$.
\end{itemize}
It remains to check that we have constructed everything so that $L$ is  a $t$-linker in $T$ with exceptional set $X\cup S$. 
Notice that   $D'^-_i$ is an $(8,8,16)$-indominator with exceptional set $X$ for each $i$, since $D^-_i$ was an $(8,16,16)$-indominator with exceptional set $X$, and we only removed vertices from the ``$A^1$'' and ``$A^4$'' sets of the indominator. For the same reason $D'^+_i$ is an $(8,8,16)$-outdominator with exceptional set $X$ for each $i$. Lemmas~\ref{DominatorRobustness} and~\ref{OutdominatorRobustness} together with (v) and $|S|\leq 40t$ imply that $D'^-_i$ and $D'^+_i$ are $(8,8,8)$-dominators with exceptional set $X\cup S$. Conditions (L1) -- (L3) are immediate from our construction. Condition (L4) follows from our choice of $I_5$ and $I_6$. Finally all the edges in (L5) are present as a consequence of our applications of Lemmas~\ref{RamseyLemma} and~\ref{RamseyLemmaReverse}.

The fact that ``for any $Y$ containing $X$ with $2|Y|\leq \delta^+(T)$, $L$ is also a linker in $T$ with exceptional set $Y$'' follows immediately from Lemmas~\ref{DominatorRobustness} and~\ref{OutdominatorRobustness}, and the fact that $D'^-_i$ and $D'^+_i$ are $(8,8,16)$-dominators.
\end{proof}

The following lemma allows us to find many linkers in a highly connected tournament.
\begin{lemma}\label{LinkerExistence}
There is a constant $C_0=C_0(t)$  such that every $C_0k$-connected tournament contains $k$ vertex disjoint $t$-linkers with a common exceptional set $X$ of size  $\leq C_0k$.
\end{lemma}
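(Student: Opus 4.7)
The plan is to build $N = kR_0(t)$ vertex-disjoint ``raw triples'' $H_1, \dots, H_N$ (each an indominator, outdominator, and connecting path, as required by Lemma~\ref{SingleLinkerConstruction}) sharing a single common exceptional set $X_0$ of size $O(k)$. We then partition the $N$ triples into $k$ blocks of size $R_0(t)$ and apply Lemma~\ref{SingleLinkerConstruction} once per block to produce $k$ vertex-disjoint $t$-linkers, each with a slightly enlarged common exceptional set of size $O(k)$.

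To find the dominators, iteratively apply Lemma~\ref{DominatorExistence} with $(m,M,p)=(8,16,32)$ and some fixed $L=L(m,M)$ to produce vertex-disjoint $(8,16,32)$-indominators $D^-_1,\dots,D^-_N$. At step $i$, pass in the forbidden set $Y_i = V(D^-_1)\cup\dots\cup V(D^-_{i-1})$, of size at most $48(i-1)\leq 48N$, which fits the hypothesis once $C_0$ is large enough so that $|T| \geq C_0 k$ dominates the extra additive constants. Lemma~\ref{DominatorExistence} produces $D^-_i$ with exceptional set $X^-_i\supseteq Y_i$ of size $\leq L+|Y_i|$. The key observation is that $X^-_i\setminus Y_i$ has size at most $L$, so
\[
\Bigl|\bigcup_i X^-_i\Bigr|\;\leq\;\Bigl|\bigcup_i Y_i\Bigr|+\sum_i|X^-_i\setminus Y_i|\;\leq\;48(N-1)+NL=O(k),
\]
linear rather than the naive quadratic bound. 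Apply Lemma~\ref{OutdominatorExistence} analogously, incorporating all indominator vertices into the initial forbidden set, to obtain $(8,16,32)$-outdominators $D^+_1,\dots,D^+_N$ vertex-disjoint from every $D^-_j$, with $|\bigcup_i X^+_i|=O(k)$. The large in-degree and out-degree conditions on the $A^1_i$ and $B^1_i$ come packaged with Lemmas~\ref{DominatorExistence} and~\ref{OutdominatorExistence}.

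For the connecting paths $Q_i$ from the head of $D^-_i$ to the tail of $D^+_i$, pairwise vertex-disjoint and internally disjoint from all dominators, appeal to a linkage theorem for tournaments (the main result of \cite{PokrovskiyLinking}), which says that every sufficiently highly connected tournament is $N$-linked. Deleting the $O(k)$ dominator vertices reduces connectivity by at most $O(k)$; if $C_0$ is chosen large enough relative to $R_0(t)$, the residual tournament remains strongly $\Omega(N)$-connected, hence $N$-linked, and the $N$ disjoint paths exist. Now set $X_0 = \bigcup_i X^-_i \cup \bigcup_i X^+_i \cup \bigcup_i V(D^-_i) \cup \bigcup_i V(D^+_i)$; then $|X_0| = O(k)$, and every dominator vertex lies in $X_0$. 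Since $\delta^{\pm}(T)\geq C_0 k$, choosing $C_0$ large ensures $2(|X_0|+40t)\leq \delta^{\pm}(T)$, so Lemmas~\ref{DominatorRobustness} and~\ref{OutdominatorRobustness} upgrade each $D^{\pm}_i$ into an $(8,16,16)$-dominator with common exceptional set $X_0$, as required by Lemma~\ref{SingleLinkerConstruction}.

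Finally, partition $\{1,\dots,N\}$ into $k$ blocks of size $R_0(t)$, distributing the paths greedily by length so that each block's vertices fit the $|Z|\leq |T|/50-40t$ constraint of Lemma~\ref{SingleLinkerConstruction} (possible since the $N$ disjoint paths have total length at most $|T|$, and $|T|$ is large compared to $k$ for $C_0$ large). For each block, Lemma~\ref{SingleLinkerConstruction} yields a $t$-linker $L_j$ together with an auxiliary set $S_j$ of at most $40t$ new vertices. The linkers $L_1,\dots,L_k$ are vertex-disjoint by construction, and setting $X = X_0 \cup \bigcup_j S_j$ gives $|X|\leq O(k)+40tk = O(k)$; the ``for any $Y$ containing $X\cup S$'' clause of Lemma~\ref{SingleLinkerConstruction} then guarantees that each $L_j$ is a $t$-linker with common exceptional set $X$. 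The main obstacle is the path-finding step, since Menger's theorem cannot handle multiple vertex pairs simultaneously and one must appeal to a genuine tournament linkage theorem; a secondary subtlety, responsible for the linear rather than quadratic bound on $|X|$, is the observation that the ``new'' portion of each successive $X^{\pm}_i$ is bounded by the absolute constant $L$, so the union grows only linearly in $k$.
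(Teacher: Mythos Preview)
Your overall architecture matches the paper's: build many disjoint indominator/outdominator/path triples with a common small exceptional set, then feed blocks of $R_0$ of them into Lemma~\ref{SingleLinkerConstruction}. Two points deserve correction, one a misconception and one a genuine gap.

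\textbf{Menger suffices.} You assert that Menger's theorem cannot produce the paths $Q_i$ and that one must invoke a tournament linkage theorem. This is not so. Lemma~\ref{SingleLinkerConstruction} does not require $Q_i$ to go from the head of $D^-_i$ to the tail of $D^+_i$ for the \emph{same} index $i$; any outdominator will do. The paper simply applies Menger in the tournament $T'$ obtained by deleting the dominator vertices (except the heads $h^-_i$ and tails $t^+_i$), which remains highly connected, to get $C_1k$ vertex-disjoint paths from $\{h^-_1,\dots,h^-_{C_1k}\}$ to $\{t^+_1,\dots,t^+_{C_1k}\}$. This automatically yields a permutation $\sigma$ with $Q_i$ going from $h^-_i$ to $t^+_{\sigma(i)}$, and one sets $H_i=D^-_i\cup Q_i\cup D^+_{\sigma(i)}$. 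No linkage result is needed.

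\textbf{The $|Z|$ constraint.} Your handling of condition (iv) of Lemma~\ref{SingleLinkerConstruction} does not work. When you build the $j$th linker, the auxiliary set $S_j$ returned by Lemma~\ref{SingleLinkerConstruction} is only guaranteed to avoid $Z$; to make the $k$ linkers pairwise disjoint you must therefore take $Z$ to contain \emph{all} the $H_i$ (from every block), not just those in block $j$. But then $|Z|$ includes the total length of all $N$ paths $Q_i$, which can be as large as $|T|$, and the bound $|Z|\leq |T|/50-40t$ fails. Your ``greedy distribution by length'' only controls the size of a single block and does not address this. The paper's fix is to overproduce: build $C_1k$ triples with $C_1=50(R_0+40t)$, then use the pigeonhole principle to select a sub-collection $I$ of size $R_0k$ with $\bigl|\bigcup_{i\in I}V(H_i)\bigr|\leq |T|/50-40tk$, and take $Z$ to be this union (updated by the successive $S_j$'s at each step). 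All $k$ blocks are then carved out of this small sub-collection, and the $|Z|$ bound holds throughout.
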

\begin{proof}
We first show that we can find many subdigraphs of $T$ satisfying the conditions of Lemma~\ref{SingleLinkerConstruction}.
Let $R_0=R_0(t)$ be the constant given by Lemma~\ref{SingleLinkerConstruction}.  We let $C_1=50(R_0+40t)$ and $C_0=2^{32}C_1$. Let $T$ be a $C_0k$-connected tournament. Notice that this implies that $|T|, \delta^+(T), \delta^-(T)\geq C_0k$.

\begin{claim}\label{PrelinkerClaim}
The tournament $T$ contains a sets of vertices $X$ and $Z$, and $R_0k$ vertex-disjoint digraphs $H_1, \dots, H_{R_0k}$ satisfying parts (i) -- (v) of Lemma~\ref{SingleLinkerConstruction}.
\end{claim}
\begin{proof}
By applying Lemma~\ref{DominatorExistence} repeatedly, we can choose $C_1k$ vertex-disjoint $(8,16,32)$-indominators $D^-_1, \dots, D^-_{C_1k}$ of $T$, with a common exceptional set $X^-$ of order at most $2^{25}C_1k$. Indeed to do this, we first apply Lemma~\ref{DominatorExistence} to $T$ with $m=8$, $M=16$, $p=64$, $L=2^{25}$, and $Y=\emptyset$ to find an $(8,16,64)$-indominator $D^-_1$ with an exceptional set $X_1$ satisfing $|X_1|\leq 2^{25}$. Then for $i=2,\dots, C_1k$, we apply Lemma~\ref{DominatorExistence} to $T$ with $m=8$, $M=16$, $p=64$, $L=2^{25}$, and $Y=V(D^-_{j})\cup X_{j}$ in order to find a disjoint $(8,16,64)$-indominator $D^-_i$ of $T$ with exceptional set $X_i$ satisfying $|X_i|\leq i2^{25}$ and containing $Y$.  Notice that we always have $|Y|\leq 2^{26}C_1 k \leq |T|/25-2^{2m+2M}$ and so are allowed to apply Lemma~\ref{DominatorExistence} in this way.  Let $X^-=V(D^-_{C_1k})\cup X_{C_1k}$. Notice that $2|X^-|\leq \delta^+(T)$, and so by Lemma~\ref{DominatorRobustness}, for each $i$, $D^-_i$ is an $(8,16,32)$-indominator with exceptional set $X^-$.

By the same argument, using Lemma~\ref{OutdominatorExistence} we can choose  $C_1k$ vertex disjoint $(8,16,16)$-outdominators $D^+_1, \dots, D^+_{C_1k}$ of $T$, with a common exceptional set $X$ of order at most $2^{26}C_1k$ containing $X^-$. By choosing $Y$ to contain $X^-$ at each application of Lemma~\ref{DominatorExistence}, we also ensure that $D^+_i\cap D^-_j=\emptyset$ for all $i$ and $j$. Since $2|X|\leq \delta^+(T)$ holds, Lemma~\ref{DominatorRobustness} again implies that for each $i$, $D^-_i$ is an $(8,16,16)$-indominator with exceptional set $X$.

Recall that Lemma~\ref{DominatorExistence} ensures that all the vertices in $A^1$ of the indominator it produces haveat most large in-degree. Therefore, we have that all the vertices $A_i^1$ and $B_i^1$ have large in-degree and out-degree respectively (as will be required in part (ii) of Lemma~\ref{SingleLinkerConstruction}).

Let $h^-_1, \dots, h^-_{C_1k}$ be the heads of the indominators $D^-_1, \dots, D^-_{C_1k}$. Let $t^+_1,\dots, t^+_{C_1k}$  be the tails  of the outdominators $D^+_1, \dots, D^+_{C_1k}$. Let $T'=\Big(T\setminus \bigcup_{i=1}^{C_1k}\big(V(D^-_i)\cup V(D^+_i)\big)\Big)\cup \bigcup_{i=1}^{C_1k}\{ h^-_i, t^+_i\}$, i.e. $T'$ is the subtournament of $T$ built by removing all the dominators we constructed, and then adding the heads and tails back in.

Since the dominators constructed above each have  $48$ vertices, $T'$ is $(C_0-96C_1)k$-connected. Since $C_0-96C_1\geq C_1$, we can apply  Menger's Theorem to find vertex disjoint paths $Q_1, \dots, Q_{C_1k}$ such that $Q_i$ goes from $h^-_i$ to $t^+_{\sigma(i)}$ for some permutation $\sigma$ of $[C_1k]$.  For each $i$, let $H_i=D^-_i\cup Q_i\cup D^+_{\sigma(i)}$.

Since the graphs $H_i$ are all vertex disjoint, the Pigeonhole Principle implies that there is a subset $I\subseteq [C_1k]$ of order $R_0k=C_1k/50-40tk$ such that $|\bigcup_{i\in I} H_i|\leq |T|/50-40tk$. Let $Z=\bigcup_{i\in I}V(H_i)$.

It is easy to check that the collection of  graphs $\{H_i:i\in I\}$ together with the sets $X$ and $Z$ satisfy all the conditions of Lemma~\ref{SingleLinkerConstruction}.
Indeed (i) and (iii) hold from our construction of the dominators, paths, and sets $X$ and $Z$. Condition (ii) holds since Lemmas~\ref{DominatorExistence} and~\ref{OutdominatorExistence} ensured that all the vertices in $A^1_i$ and $B^1_i$ have large in-degrees and out-degrees respectively. Condition (iv) holds from our choice of $I$. Condition (v) holds since we have $\delta^+(T), \delta^-(T)\geq C_0k\geq 2(|X|+40t)$.
\end{proof}

Now partition $\{H_1, \dots, H_{R_0+k}\}$ into $k$ collections $\mathcal H_j=\{H_{(j-1)R_0+1}, \dots, H_{jR_0}\}$ for $j=1, \dots, k$. Let $X_1=X$ and $Z_1=Z$. Apply Lemma~\ref{SingleLinkerConstruction} to $\mathcal H_1$ with the sets $X_1$ and $Z_1$ to find a $t$-linker consisting of vertices in $\mathcal H_1$, plus a set of vertices $S_1$ of order at most $40t$. Let $X_2=X_1\cup S_1$ and $Z_2=Z_1\cup S_1$. Then for each $i=2, \dots, k$, apply Lemma~\ref{SingleLinkerConstruction} to $\mathcal H_i$ with the sets $X_i$ and $Z_i$ to find a $t$-linker $L_i$ consisting of vertices in $\mathcal H_i$, plus a set of vertices $S_i$ of order at most $40t$ (at each step letting $X_i=X_{i-1}\cup S_{i-1}$ and $Z_i=Z_{i-1}\cup S_{i-1}$). This gives us a collection of $k$ disjoint linkers $L_1, \dots, L_{k}$ with exceptional sets $X_1,\dots, X_{k} $ respectively.  Since $|X|\leq 2^{26}C_1k$, $|S_i|\leq 40t$ and $X_{k}=X\cup S_1\cup \dots\cup S_{k}$ we have $|X_{k}|\leq 2^{27}C_1k\leq C_0k$. The last part of Lemma~\ref{SingleLinkerConstruction} ensures that $L_1, \dots, L_{k}$  are all $t$-linkers in $T$ with the common exceptional set $X_{k}$ as required. 
\end{proof}

\subsection{Properties of linkers}
In this section, we prove that families of linkers are linking families.
First we will need to show that linkers have Hamiltonian paths between pairs of essential vertices.
\begin{lemma}\label{1LinkerHamiltonicity}
Let $L$ be a $1$-linker in a tournament $T$. Let $x$ and $y$ be two distinct vertices in $L$ such that $x$  is in  the indominator of $L$  and $y$ is either in the outdominator of $L$ or a sink of the connector
 of $L$. Then $L$ contains a Hamiltonian path from $x$ to $y$.
\end{lemma}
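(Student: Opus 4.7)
The plan is to traverse $L$ in a ``bouncing'' pattern, alternating between the indominator $D^-$ and the outdominator $D^+$: cross from $D^-$ to $D^+$ using the paths $Q_j$, and cross back using the source-sink paths of the connector $C$. First one should observe that the transitive orderings coming from (D\ref{Indom:Transitive}) (in the form in which the dominators are built in Lemma~\ref{DominatorExistence}) place every vertex of $A^i$ before every vertex of $A^{i+1}$, so all edges go $A^i\to A^{i+1}$, each $A^i$ is itself transitive, and dually $B^{i+1}\to B^i$ inside $V(D^+)$. Combined with (L\ref{LinkerEdges}) this gives $L$ the cyclic skeleton
\[
A^1\to A^2\to A^3\to A^4\to Q_j\to B^4\to B^3\to B^2\to B^1\to C\to A^1.
\]

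I split into two cases. In Case~1 ($y\in V(D^+)$), I construct the Hamiltonian path in the form
\[
I_1 \to Q_1 \to O_1 \to K_1 \to I_2 \to Q_2 \to O_2 \to K_2 \to \cdots \to I_5 \to Q_5 \to O_5,
\]
where each ``$\to$'' denotes a single edge coming from (L\ref{LinkerEdges}). Here $I_1,\dots,I_5$ are vertex-disjoint paths that together partition $V(D^-)$: $I_1$ starts at $x$, each of $I_2,\dots,I_5$ starts at a distinct vertex of $A^1$, and all five end at distinct vertices of $A^4$. Similarly $O_1,\dots,O_5$ partition $V(D^+)$, each starting at a distinct vertex of $B^4$, with $O_1,\dots,O_4$ ending at distinct vertices of $B^1$ and $O_5$ ending at $y$. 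The $K_1,\dots,K_4$ are the four source-sink paths from the $4$-path covering of $C$ given by the connector definition. In Case~2 ($y$ a sink of $C$) the construction is the same except that we append one more connector segment,
\[
\cdots \to I_5 \to Q_5 \to O_5 \to K_5,
\]
using the $5$-path covering of $C$ (reordered so that the source-sink path ending at $y$ is taken last); now all $O_j$ end in $B^1$.

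The heart of the proof is therefore to decompose $V(D^-)$ and $V(D^+)$ into five paths of the required shape. Since all edges $A^i \to A^{i+1}$ are present, each $A^i$ is transitive, and $|A^i| = 8 \geq 5$, one can explicitly carve out five vertex-disjoint paths through the four layers by prescribing how many vertices of each $A^i$ lie on each path and concatenating them along the transitive order inside each layer and the complete bipartite edges between consecutive layers. The analogous construction handles $V(D^+)$.

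The main obstacle is the case analysis for ``awkward'' positions of $x$ (and, in Case~1, of $y$): if $x \in A^r$ with $r > 1$, then $I_1$ cannot visit any $A^j$ with $j < r$, so the remaining four paths must absorb $A^1 \cup \cdots \cup A^{r-1}$; in the extreme $x \in A^4$, $I_1$ degenerates to the single vertex $x$ and the other four $I_j$ must partition $V(D^-)\setminus\{x\}$. Each subcase is settled by an explicit allocation of layer-counts, using the slack provided by $|A^i|=8$; the outdominator is treated symmetrically. Once the $I_j$, $O_j$, $K_j$ are in place, the full Hamiltonian path is obtained simply by stitching them with the edges from (L\ref{LinkerEdges}).
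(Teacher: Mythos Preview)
Your proof is correct and follows essentially the same route as the paper: decompose $V(D^-)$ into five paths (one starting at $x$, the rest starting in $A^1$, all ending in $A^4$), decompose $V(D^+)$ into five paths (all starting in $B^4$, ending in $B^1$ except possibly one ending at $y$), use the $4$- or $5$-path cover of $C$, and stitch everything with the $Q_j$ via the edges of (L\ref{LinkerEdges}). The paper writes the resulting Hamiltonian path in exactly your ``bouncing'' order $I_1 Q_1 O_1 K_1 I_2 Q_2 \cdots$ (with $P_x,P^-_j,P^+_j,P_y,R_j$ in place of your $I_j,O_j,K_j$).

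The only difference is one of emphasis: the paper disposes of the layer decomposition of $V(D^-)$ in a single parenthetical (``using (D\ref{Indom:Transitive}) combined with $|A^i|=8$''), whereas you spell out the case analysis on which $A^r$ contains $x$. One small caveat: your justification leans on the stronger property ``every edge goes $A^i\to A^{i+1}$'', which you attribute to the construction in Lemma~\ref{DominatorExistence} rather than to (D\ref{Indom:Transitive}) itself. Since the lemma is stated for an arbitrary $1$-linker, it would be cleaner to argue directly from (D\ref{Indom:Transitive}); but as every linker in the paper is produced via that construction, this does not affect the proof's validity in context.
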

\begin{proof}
Let $D^-=(A^1,A^2,A^3,A^4,E^-)$, $D^+=(B^1,B^2,B^3,B^4, E^+)$, and $C$ be the indominator, outdominator, and connector of $L$ respectively, and $Q_1, Q_2, Q_3, Q_4, Q_5$ be the five paths of $L$.

First we'll consider the case when $y$ is in the outdominator of $L$.  
Let $P_x$ be a shortest path from $x$ to $A^4$. Let $P_y$ be a shortest path from $B^4$ to $y$. Let $P^-_1$, $P^-_2$, $P^-_3$, $P^-_4$  be four paths, each from $A^1$ to $A^4$ such that $P_x$, $P^-_1$, $P^-_2$, $P^-_3$, $P^-_3$ together partition $V(D^-)$ (we can choose such disjoint paths using (D\ref{Indom:Transitive}) combined with the fact that $|A^i|= 8$ for all~$i$).
 Similarly, let $P^+_1$, $P^+_2$, $P^+_3$, $P^+_4$  be four paths, each from $B^4$ to $B^1$ such that $P_y$, $P^+_1$, $P^+_2$, $P^+_3$, $P^+_4$ together partition $V(D^+)$.  From the definition of connector, we can partition $V(C)$ into four paths $R_1, \dots, R_4$, each going from a source of $C$ to a sink. 
 Now we have a Hamiltonian path from  $x$ to $y$ formed by joining 
 $P_x$ to  $Q_1$ to  $P^+_1$ to $R_1$ to $P^-_1$ to $Q_2$  to $P^+_2$ to $R_2$ to $P^-_2$ to $Q_3$ to $P^+_3$ to $R_3$ to $P^-_3$ to $Q_4$ to $P^+_4$ to $R_4$ to $P^-_4$ to $Q_5$ to $P_y$. Part (L5) of Definition~\ref{LinkerDefinition} ensures that all the edges between the endpoints of these paths are oriented the correct way.

Now consider the case when $y$ is a sink of $C$. As in the previous case, let $P_x$ be a shortest path from $x$ to $A^4$, let $P^-_1$, $P^-_2$, $P^-_3$, $P^-_4$  be four paths from $A^1$ to $A^4$  partitioning $V(D^-)$,  let $P^+_1$, $P^+_2$, $P^+_3$, $P^+_4$, $P^+_4$  be five paths, from $B^4$ to $B^1$ partitioning $V(D^+)$. From the definition of connector, we can partition $V(C)$ into five paths $R_1, \dots, R_5$, each going from a source of $C$ to a sink. Since $y$ is a sink, one of these paths ends in $y$ . Without loss of generality let this be $R_5$.  Now we have a Hamiltonian path from $x$ to $y$  formed by joining $P_x$ to $Q_1$ to $P^+_1$ to $R_1$ to $P^-_1$ to $Q_2$ to $P^+_2$ to $R_2$ to $P^-_2$ to $Q_3$ to $P^+_3$ to $R_3$ to $P^-_3$ to $Q_4$ to $P^+_4$ to $R_4$ to $P^-_4$ to $Q_5$ to $P^+_5$ to $R_5$.
\end{proof}

\begin{lemma}\label{LinkerHamiltonicity}
Let $L$ be a $t$-linker for any $t\geq 1$. Let $x$ be a vertex in one of the indominators of $L$ and $y$ a vertex in one of the outdominators of $L$. Then $L$ contains a Hamiltonian path from $x$ to $y$.
\end{lemma}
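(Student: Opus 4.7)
We proceed by generalizing the direct construction in the proof of Lemma~\ref{1LinkerHamiltonicity}. Suppose $x \in V(D^-_i)$ and $y \in V(D^+_j)$ for some $i, j \in \{1, \dots, t\}$. The idea is to interleave $t$ copies of the basic Hamiltonian-path pattern used in the $1$-linker case; this is possible because the ``crossing'' edges supplied by (L5) exist between \emph{any} indominator/outdominator/connector/path inside $L$, not merely within a single $1$-linker sub-structure.

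First I would partition each essential component of $L$ into suitable segments. Partition $V(D^-_i)$ into five vertex-disjoint directed paths: one path $P_x$ from $x$ to a vertex of $A^4_i$, and four paths from $A^1_i$ to $A^4_i$ covering the remainder. For each $k \neq i$, partition $V(D^-_k)$ into five disjoint paths from $A^1_k$ to $A^4_k$. These partitions can be produced by exactly the same argument as in the proof of Lemma~\ref{1LinkerHamiltonicity}, which relies on (D2) and on $|A^1_k|=|A^2_k|=|A^3_k|=|A^4_k|=8 \geq 5$. Symmetrically, partition $V(D^+_j)$ into a path $P_y$ from some vertex of $B^4_j$ to $y$ plus four $B^4_j$-to-$B^1_j$ paths, and each $V(D^+_l)$ for $l \neq j$ into five $B^4_l$-to-$B^1_l$ paths. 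For the connectors, partition $V(C_t)$ into four source-to-sink paths and each $V(C_m)$ with $m<t$ into five source-to-sink paths, using the definition of connector with $n=4$ and $n=5$ respectively.

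This produces exactly $5t$ indominator-segments, $5t$ outdominator-segments, $5t-1$ connector-segments, and $5t$ paths $Q_1,\dots,Q_{5t}$. I would then assemble them in the order
\[ P_x, \; Q, \; D^+\text{-seg}, \; C\text{-seg}, \; D^-\text{-seg}, \; Q, \; D^+\text{-seg}, \; C\text{-seg}, \; \dots, \; D^-\text{-seg}, \; Q, \; P_y, \]
using each segment exactly once, with $P_x$ as the initial $D^-$-segment and $P_y$ as the final $D^+$-segment and the remaining ones placed in an arbitrary order. The consecutive transitions needed are precisely: an $A^4_k$-vertex to the start of a $Q$-path, the end of a $Q$-path to a $B^4_l$-vertex, a $B^1_l$-vertex to a source of some $C_m$, and a sink of some $C_m$ to an $A^1_k$-vertex. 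By (L5), all such edges are present in $L$ for every choice of indices $k, l, m$, so the concatenation yields a Hamiltonian path from $x$ to $y$ in $L$.

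The only substantive step is the $5$-path partition of each $(8,8,8)$-indominator and outdominator. This is essentially identical to the $4$-path partitioning task already carried out in the proof of Lemma~\ref{1LinkerHamiltonicity}; the extra fifth path merely demands one additional distinct endpoint in each of the two outer blocks, which is available because those blocks have $8$ vertices each. Once this partition step is in hand, everything else is a routine bookkeeping exercise made possible by the very permissive edges in (L5).
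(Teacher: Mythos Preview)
Your argument is correct. It differs from the paper's proof in that you build the Hamiltonian path in one shot by globally interleaving all $5t$ indominator-segments, $5t$ outdominator-segments, $5t-1$ connector-segments and $5t$ paths, whereas the paper instead partitions $L$ into $t$ sub-$1$-linkers (with $x$ in the first and $y$ in the last), applies Lemma~\ref{1LinkerHamiltonicity} inside each one to get a Hamiltonian path ending (for all but the last) at a connector sink, and then glues consecutive pieces via the sink-to-$A^1$ edges from (L5). The paper's route is more modular --- it reuses Lemma~\ref{1LinkerHamiltonicity} as a black box and only needs the single sink--$A^1$ edge type for gluing --- but it is also the reason the ``$y$ is a sink of the connector'' case had to be proved in Lemma~\ref{1LinkerHamiltonicity}. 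Your direct construction sidesteps that extra case at the cost of slightly more bookkeeping; the five-path partitions of the dominators you need are exactly the ones already justified in the proof of Lemma~\ref{1LinkerHamiltonicity} (the ``$y$ in the outdominator'' and ``$y$ a sink'' cases there yield the $P_x+4$ and the uniform five-path partitions respectively), and the $n=4$ and $n=5$ connector coverings come straight from the definition.
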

\begin{proof}
If $t=1$, then the lemma follows from Lemma~\ref{1LinkerHamiltonicity}, so suppose $t\geq 2$.
We can partition $L$ into $t$ $1$-linkers $L_1, \dots, L_t$ such that $L_1$  contains $x$ and $L_t$ contains~$y$.

By Lemma~\ref{1LinkerHamiltonicity}, for we can find a Hamiltonian path ${P}_1$ in $L_1$ from $x$ to  a sink of the connector of $L_1$. Similarly for $i=2, \dots, t-1$, we can find a Hamiltonian path ${P}_i$ from a vertex in the $A^1$-set of $L_i$ to a sink of the connector of $L_i$. Finally, we can find a Hamiltonian path ${P}_t$ in $L_t$ from a vertex in the $A^1$-set of $L_t$ to $y$. Joining these together using the fact that there is  an edge from any of the sinks of the connectors in a $t$-linker and the $A^1$-sets, gives the required Hamiltonian path in $L$.
\end{proof}

The following lemma is the main property that linkers have. It says that under certain conditions on a tournament $T$, a linker is a linking family of size 1 in $T$.
\begin{lemma}\label{LinkerMainLemma}
Let $t$ and $K$ be integers satisfying $K/5\geq t\geq 12$.
Let $T$ be a tournament with minimum in and out-degrees at least $80K$.
Suppose that  we have a $t$-linker $L$ in $T$ with exceptional set $X$ such that $|X|\leq K$.

For $r\leq K$, suppose we have two vertices $x$ and $y \in V(T)\setminus V(L)$ and vertex disjoint paths ${P}_1, \dots, {P}_r$  in $V(T)\setminus (V(L)\cup \{x,y\})$.  Then there are vertex disjoint paths ${P}, {P}'_1, \dots, {P}'_r$ such that
\begin{enumerate}[(i)]
\item ${P}$ is from $x$ to $y$.
\item ${P}'_j$ has the same endpoints as ${P}_j$ for every $j$.
\item $V({P})\cup V({P}'_1)\cup \dots\cup V({P}'_r)$ consists of $V(L_i)\cup V({P}_1)\cup \dots\cup V({P}_r)\cup\{x,y\}$, plus at most $6$ other vertices.
\end{enumerate}
\end{lemma}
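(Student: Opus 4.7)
The strategy is to build $P$ as an \emph{entry path} $\pi_x$ (from $x$ into an indominator of $L$), followed by a Hamiltonian path of $L$ supplied by Lemma~\ref{LinkerHamiltonicity}, followed by an \emph{exit path} $\pi_y$ (from an outdominator of $L$ to $y$); each $P'_\ell$ will equal $P_\ell$, with at most a minor surgery in the case where a detour vertex happens to lie on some $P_\ell$.

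First, by reversing all arcs of $T$ and swapping the roles of in- and out-dominators inside $L$ (as permitted by the remark after Definition~\ref{LinkerDefinition}), we may assume the first alternative of (L\ref{UncoveredBiggerLinker}) holds, so $|E^-_i|\ge |E^+_j|$ for every $i,j$. We then construct $\pi_x$: if some $i$ satisfies $x\notin X\cup E^-_i$, the in-dominating property of $A^2_i\cup A^3_i$ gives a length-$1$ edge $x\to a$ with $a\in A^2_i\cup A^3_i$, introducing zero intermediate vertices. Otherwise ($x\in X$ or $x$ belongs to every $E^-_i$), we combine the out-degree bound $d^+(x)\ge 80K$ with $|X|\le K$ and the expansion property (D\ref{Indom:EExpansion}) of $E^-_i$ to find an out-neighbour $x'\notin X\cup V(L)$ that avoids $E^-_i$ for some $i$; then $x\to x'\to a$ is a length-$2$ entry path. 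A symmetric argument produces the exit path $\pi_y$ from $b\in B^2_j\cup B^3_j$ to $y$, introducing at most one intermediate vertex $y'$. Lemma~\ref{LinkerHamiltonicity} gives a Hamiltonian path $H$ of $L$ from $a$ to $b$, and $P=\pi_x\cdot H\cdot\pi_y$ is a path from $x$ to $y$ whose vertex set is $V(L)\cup\{x,y\}$ together with up to two intermediate vertices $x',y'$.

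Each intermediate vertex is either not in any $V(P_\ell)$, in which case it counts toward the allowed six extras, or it lies on some $V(P_\ell)$. In the latter case, split $P_\ell=P_\ell^1\cdot x'\cdot P_\ell^2$: in a tournament, one of the edges between the last vertex $u$ of $P_\ell^1$ and the first vertex $v$ of $P_\ell^2$ is present. If $u\to v$ is present, set $P'_\ell=P_\ell^1\cdot P_\ell^2$ with no extras; otherwise a single bridge vertex $w$ with $u\to w\to v$ is found using the $80K$ degree margin (much larger than the total number of vertices already committed), contributing one extra. With at most two detours and at most two bridges, the total number of new extras stays within the budget of six.

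The main difficulty is orchestrating these simultaneous selections --- the indices $i$ and $j$, the endpoints $a,b$, the detour vertices $x',y'$, and any bridge vertices --- so that every vertex-disjointness constraint is satisfied and no detour vertex accidentally duplicates another structural vertex. The slack comes from the hypotheses $|X|\le K$, $r\le K$, $t\le K/5$, minimum degree $\ge 80K$, and the structural consequence of (L\ref{UncoveredBiggerLinker}) that the ``other side'' uncovered sets $E^+_j$ are comparatively small.
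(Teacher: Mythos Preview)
Your proposal has a genuine gap in the ``bridge vertex'' step. When the intermediate vertex $x'$ (or $y'$) lands on some $P_\ell$ and the direct edge $u\to v$ between its neighbours is absent, you claim to find $w$ with $u\to w\to v$ using the $80K$ degree margin. But the set of such $w$ is $N^+(u)\cap N^-(v)$, and knowing only that $d^+(u),d^-(v)\ge 80K$ gives no lower bound on this intersection when $|T|$ is large; it can be empty. Even if such a $w$ exists, it may itself lie on another $P_j$ or on one of the linker paths $Q_j$, forcing further repairs you do not budget for. A related problem appears earlier: you ask for $x'\notin X\cup V(L)\cup E^-_i$, but $V(L)$ contains the paths $Q_1,\dots,Q_{5t}$, which may be long, so the degree surplus $80K$ does not suffice to avoid them.

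The paper's proof handles exactly this obstruction in a different way. It does not try to repair a broken path $P_\ell$ (or $Q_j$) with a single bridge vertex. Instead it proves a graded claim with cases (a)--(d): in the inductive cases it chooses the detour vertex $x_1$ \emph{on} one of the paths, with predecessor $x_2$ and successor $y_2$ carefully chosen outside $X$ and the relevant $E^\pm$-sets, then \emph{splits the $t$-linker} into a small sub-linker $L'$ and a remaining $(t{-}1)$- or $(t{-}2)$-linker $L''$. The sub-linker $L'$ is consumed (via an earlier case of the claim) to produce an $x_2$--$y_2$ path that repairs the broken path through $V(L')$, and $L''$ is then used to link $x_1$ to $y$. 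This is why the hypothesis $t\ge 12$ is needed --- you must be able to peel off several sub-linkers for successive repairs --- whereas your argument essentially uses only one indominator and one outdominator of $L$.
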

\begin{proof}
We'll actually prove a slightly stronger statement about $t$-linkers for all $t\geq 1$. Suppose we have $t\geq 1$, and $K\geq 5t$, $T$, $L$ and $X$ as in the statement of the lemma.  Let the dominators, connectors, and paths of $L$ be labelled as in the Definition~\ref{LinkerDefinition}.

Notice that without loss of generality, we can assume that $|E_t^-|\geq |E_1^+|$ occurs in (L\ref{UncoveredBiggerLinker}) for the linker $L$. Indeed otherwise, we could reverse all arcs in the tournament and exchange the roles of $x$ and $y$ in order to reduce to the case when $|E_t^-|\geq |E_1^+|$ holds.

Let $x$ and $y$ be two vertices in $V(T)\setminus V(L)$. We will prove the lemma in several steps depending on where $x$ and $y$ lie.
\begin{claim}
Let  ${P}_1, \dots, {P}_r$  be vertex disjoint paths in $V(T)\setminus (V(L)\cup \{x,y\})$. 
 Suppose that any of the following hold.
\begin{enumerate} [(a)]
\item $t\geq 1$, $r\leq K+9$, $m=0$, and $x\not \in E^-_i\cup X$ and $y\not \in E^+_j\cup X$  for some $i,j\leq t$.
\item $t\geq 2$, $r\leq K+4$, $m=1$, and $x \not \in X$ and $y\not \in E^+_j\cup X$ for some $j\leq t$.
\item $t\geq 4$, $r\leq K+2$, $m=2$, and $x \not \in  X$ and $y\not \in X$.
\item $t\geq 12$, $r\leq K$, and $m=6$.
\end{enumerate}

Then there are vertex disjoint paths ${P}, {P}'_1, \dots, {P}'_r$ such that
\begin{enumerate}[(i)]
\item ${P}$ is from $x$ to $y$.
\item ${P}'_j$ has the same endpoints as ${P}_j$ for every $j$.
\item $V({P})\cup V({P}'_1)\cup \dots\cup V({P}'_r)$ consists of $V(L_i)\cup V({P}_1)\cup \dots\cup V({P}_r)\cup\{x,y\}$, plus at most $m$ other vertices.
\end{enumerate}
\end{claim}
\begin{proof}
Let $Q_1, \dots, Q_{5t}$ be the paths of $L$
Let $U$ be the set of endpoints of the paths ${P}_1, \dots, {P}_r$, and $W$ the set of endpoints of the paths $Q_1, \dots, Q_{5t}$. Notice that we have $|U|, |W|\leq 4K$.

\begin{enumerate}[(a)]
\item Since  $x\not \in E^-_i\cup X$, there is some $x_1\in D^-_i$ such that $xx_1$ is an edge. Similarly, since $y\not \in E^+_j \cup X$, there is some $y_1\in D^+_j$ such that $y_1 y$ is an edge. Applying  Lemma~\ref{LinkerHamiltonicity} to $L$ gives us a Hamiltonian path $R$ in $L$ from $x_1$ to $y_1$. Letting ${P}$ be the path formed by joining $x$ to $R$ to $y$ and ${P}'_i={P}_i$ for every $i$ proves the claim.

\item 
If $x \not\in E^-_1$ then we are done by  part (a). Therefore suppose that we have $x\in E^-_1$.
Choose $\ell$ to be any integer between $1$ and $t$ which is not $j$.

Since $x\in E^-_1$ we have that $|N^+(x)|\geq 8|E^-_1|$.   We also have $d^+(x)\geq 80K$. Averaging these and using $|X|\leq K$ and $|U|, |W|\leq 4K$ we obtain $|N^+(x)|\geq  4(|E^-_1|+|X|+|U|+|W|)$. Therefore there are at least $3(|E^-_1|+|X|+|U|+|W|)$ vertices in $|N^+(x)|$ outside of $E^-_1\cup X\cup U\cup W$. If one of these vertices, $x'$, is not on any of the paths ${P}_1, \dots, {P}_r, Q_1, \dots, Q_{5t}$ then we can let ${P}_{r+1}=\{x\}$, and apply part (a) to get a path $Q$ from $x'$ to $y$ and then join $x$ to this path to prove the claim.  

Therefore, we can suppose that all the vertices in $N^+(x) \setminus (E^-_1\cup X\cup U\cup W)$ are on the paths ${P}_1, \dots, {P}_r, Q_1, \dots, Q_{5t}$.  Since $|N^+(x)\setminus (E^-_1\cup X\cup U\cup W)|\geq  3(|E^-_1|+|X|+|U|+|W|)$ holds and $|E^-_1|\geq |E^-_{\ell}|, |E^+_{\ell}|$, we can choose a vertex $x_1$ in $N^+(x)\setminus (E^-_1\cup X\cup U\cup W)$ such that $x_1$ is on a path $Q'\in \{Q_1, \dots, Q_{5t}, {P}_1,\dots, {P}_{r}\}$, the predecessor of $x_1$ on $Q'$ is not in $E^-_{\ell}\cup X$, and the successor of $x_1$ on $Q'$ is not in $E^+_{\ell}\cup X$.
We'll suppose for now that $Q'$ is one of the paths $Q_1, \dots, Q_{5t}$. Without loss of generality $Q'=Q_{5t}$.
Let $x_2$ be the predecessor of $x_1$ on $Q'$ and $y_2$ the successor of $x_1$ on $Q'$. Let $Q'_x$ be the initial segment of $Q'$ ending at the predecessor of $x_2$ and $Q'_y$ the final segment of $Q'$ starting at the successor of~$y_2$.

Let $L'$ be a  $1$-linker contained in $L$ consisting of $D^-_{\ell}$, $D^+_{\ell}$,  the connector $C_{\ell}$, and  the paths $Q_1, \dots, Q_5$.  Now let $T'$ be the subtournament of $T$ formed by removing the essential vertices of $L$, and adding the essential vertices of $L'$ back in. It is easy to check that $L'$ is still a $1$-linker in $T'$ (using the fact that all the vertices we removed from $T$ were in the exceptional set $X$). Apply part (a) to $T'$ with the vertices $x_2$, $y_2$,  $1$-linker $L'$, and paths $\{Q'_x, Q'_y\}\cup\{{P}_1, \dots, {P}_r, Q_6, \dots, Q_{5t-1}\}$ as well as three one-vertex paths $\{x\}$, $\{y\}$, and $\{x_1\}$. This gives us disjoint paths $Q''_x, Q''_y, {P}'_1, \dots, {P}'_r, Q'_6, \dots, Q'_{5t-1}$ with the same endpoints as the previous paths, and a new path $R$ starting at $x_2$ and ending at~$y_2$. In addition all these paths avoid $x$, $y$, and $x_1$, and the union of their vertices is  $V(L')\cup V(P_1)\cup \dots \cup V(P_r)\cup V(Q_1)\cup \dots \cup V(Q_{5t-1})\cup V(Q'_x)\cup V(Q'_y)\cup \{x_2,y_2\}$. Let $Q_{5t}'$ be the path formed by joining $Q''_x$ to $R$ to $Q''_y$.  

Let $L''$ be the $(t-1)$-linker  formed from $L$ by removing $L'$ and replacing $Q_i$ by $Q'_i$ for each $i$. Now we can apply part (a) in $T$ with the linker $L''$, vertices $x_1$ and $y$,  and paths ${P}'_1, \dots, {P}'_r, \{x\}$. This gives us paths ${P}''_1, \dots, {P}''_r$ as well as a path $P$ from $x_1$ to~$y$. Joining $x$ to $R'$ gives the required collection of paths.

The case when $Q'$ was one of the paths  ${P}_1,\dots, {P}_{r}$ is proved identically.

\item
If $y \not\in E^+_1$ then we are done by  part (b). Therefore suppose that we have $y\in E^+_1$.

Since $y\in E^+_1$ we have that $|N^-(y)|\geq 8|E^+_1|$. As before, there are at least $3(|E^+_1|+|X|+|U|+|W|)$ vertices in $|N^-(y)|$ outside of $E^+_1\cup X\cup U\cup W$. If one of these vertices,~$y'$, is not on any of the paths ${P}_1, \dots, {P}_r, Q_1, \dots, Q_{5t}$ then we can let $P_{r+1}=\{y\}$, and apply part (b) to get a path $Q$ from $x$ to $y'$ and then join this path to $y$ to prove the claim.  

Therefore, we can suppose that all the vertices in $N^-(x) \setminus (E^+_1\cup X\cup U\cup W)$ are on  the paths ${P}_1, \dots, {P}_r, Q_1, \dots, Q_{5t}$.  Since $|N^-(y)\setminus (E^+_1\cup X\cup U\cup W)|\geq  3(|E^+_1|+|X|+|U|+|W|)$ and $|E_1^+|\geq |E_2^+|$ hold, we can choose a vertex $y_1$ in $N^-(x)\setminus (E^+_1\cup X\cup U\cup W)$ such that $y_1$ is on a path $Q'\in \{Q_1, \dots, Q_r, {P}_1,\dots, {P}_{5t}\}$ for some $i$, and the neighbours of $y_1$ on this path are  in neither $|E^+_2|$ nor $X$. Let $x_2$ be the predecessor of $y_1$ on $Q'$ and $y_2$ the successor of $y_1$ on $Q'$. 

The rest of the proof is nearly identical to the proof of part (b), so we only sketch it. We choose a $2$-linker $L'$ contained in $L$ such that $D^+_2$ is one of the outdominators of $L'$ and $L'$ doesn't contain the path $Q'$. We remove the essential vertices of the linker $L$ from $T$ and add $L'$ back in to obtain a tournament $T'$. Apply part (b) to $T'$ with the linker $L'$ in order to join $x_2$ to $y_2$ by a path. Then let $L''$ be the $(t-2)$-linker in $T$ formed by removing $L'$ from $L$. Applying part (b) to $T$ with the linker $L''$ allows us to join $x$ to $y_1$ (and then to $y$) as required.

\item 
Notice that since $T$ has minimum out-degree $\geq 80K$, $x$ has at least $7|X\cup U\cup W|$ out-neighbours outside of $X\cup U\cup W$. 
Similarly,  since $T$ has minimum in-degree $\geq 80K$, $y$ has at least $7|X\cup U\cup W|$ in-neighbours outside of $X\cup U\cup W$.  
Suppose for now that all such neighbours of $x$ and $y$ lie on the paths ${P}_1, \dots, {P}_r, Q_1, \dots, Q_{5t}$. 
Then we can choose an out-neighbour $x_1$  of $x$, and a distinct in-neighbour $y_1$  of $y$, such that $x_1$ and $y_1$ are outside of $X\cup U\cup W$. 
In addition, $x_1$ will lie on some path with predecessor $x_2$ and successor $y_2$, and $y_1$ will lie on some path with predecessor $x_3$ and successor $y_3$, such that $x_2,y_2,x_3, y_3\not\in X$. 

Similarly to how we did in (b) and (c), we can partition the linker $L$ into three sublinkers, and then apply part (c) three times in order to join $x_2$ to $y_2$ then $x_3$ to $y_3$, and finally $x_1$ to $y_1$.

The cases when $x$ and/or $y$ have neighbours outside of  $X\cup U\cup W$ and the paths ${P}_1, \dots, {P}_r, Q_1, \dots, Q_{5t}$ is very similar. The only difference is that we might not have the vertices $x_2, y_2$ or $x_3, y_3$ to join, and so we would be able to find the required $x$ -- $y$ path using either one or two applications of part (c).
\end{enumerate}
\end{proof}
The lemma follows since it is exactly part (d) of the claim.
\end{proof}

So far we have only considered linkers in tournaments.
In the Theorem~\ref{LinkageStructures}, we will actually need linkers in digraphs. We say that $L$ is a $t$-linker in a digraph $D$  with exceptional set $X$ if there is some tournament on the vertices $V(D)$ containing $D$ in which $L$ is a $t$-linker in $T$ with exceptional set $X$ (and we also require that $D$ contains all the edges of~$L$). 
We'll need the following version of Lemma~\ref{LinkerMainLemma} for digraphs.
\begin{lemma}\label{LinkerMainLemmaDigraph}
Let $t$ and $K$ be integers satisfying $K/5\geq t\geq 12$.
Let $D$ be a digraph with minimum degree at least $|D|-K$ and  minimum in and out-degrees at least $81K$.
Suppose that  we have a $t$-linker $L$ in $D$ with exceptional set $X$ such that $|X|\leq K$.

For $r\leq K$, suppose we have two vertices $x$ and $y \in V(D)\setminus V(L)$ and vertex disjoint paths ${P}_1, \dots, {P}_r$  in $V(D)\setminus (V(L)\cup \{x,y\})$.  Then there are vertex disjoint paths ${P}, {P}'_1, \dots, {P}'_r$ such that
\begin{enumerate}[(i)]
\item ${P}$ is from $x$ to $y$.
\item ${P}'_j$ has the same endpoints as ${P}_j$ for every $j$.
\item $V({P})\cup V({P}'_1)\cup \dots\cup V({P}'_r)$ consists of $V(L_i)\cup V({P}_1)\cup \dots\cup V({P}_r)\cup\{x,y\}$, plus at most $6$ other vertices.
\end{enumerate}
\end{lemma}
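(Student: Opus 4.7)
The plan is to adapt the proof of Lemma~\ref{LinkerMainLemma} to the digraph setting, using the tournament $T$ on $V(D)$ that contains $D$ and witnesses $L$ being a $t$-linker with exceptional set $X$ (such a $T$ exists by the definition of a linker in a digraph). Every edge of $L$ (including in particular the inter-piece edges promised by~(L\ref{LinkerEdges})) and every edge of the input paths $P_i$ is already in $D$, so the only edges used in the output paths that might lie in $T \setminus D$ are incident to a constant number of ``pivot'' vertices, namely $x$, $y$, and the at most six detour vertices $x_1, y_1, x_2, y_2, x_3, y_3$ introduced as the proof escalates through its four inductive cases.

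For each pivot vertex $v$ there are at most $K$ ``bad'' $T$-neighbors of $v$ (those whose edges lie in $T\setminus D$), because $\delta(D) \geq |D|-K$. Crucially, the digraph degree bound $\delta^{\pm}(D) \geq 81K$ is exactly one unit of $K$ larger than the $\delta^{\pm}(T) \geq 80K$ bound used in Lemma~\ref{LinkerMainLemma}. The strategy is: at each invocation of the proof's counting arguments in cases~(b), (c), and~(d), I would augment the forbidden set by the bad $T$-neighbors of the relevant pivot and observe that every counting inequality still holds, since the extra unit of $K$-slack absorbs the extra $K$ newly-forbidden vertices. Concretely, an inequality of the form $|N^+(x) \setminus (E^-_1 \cup X \cup U \cup W)| \geq 3(|E^-_1| + |X| + |U| + |W|)$ is re-derived in $D$ by averaging $|N^+_D(x)| \geq 8|E^-_1| - K$ against $|N^+_D(x)| \geq 81K$.

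The subtle point is case~(a) of the internal claim in the proof of Lemma~\ref{LinkerMainLemma}, where the argument uses a \emph{single} $T$-edge from $x$ to $A^2_i \cup A^3_i$ guaranteed by the indominator property; such a single edge can easily fail to lie in $D$. To get around this, I would exploit two facts. First, Lemma~\ref{LinkerHamiltonicity} only requires $x_1$ to lie in some indominator of $L$, not specifically in $A^2_i \cup A^3_i$, so any $D$-out-neighbor of $x$ inside $V(D^-_i)$ is enough. Second, the linker has $t \geq 12$ indominators; combining this with Lemmas~\ref{DominatorRobustness} and~\ref{OutdominatorRobustness} to move to a different indominator when the guaranteed single edge happens to be missing from $D$, one can reduce to the (rare) case where $x$ must genuinely be absorbed into the uncovered set of every indominator at once --- and then the degree argument of case~(b) takes over.

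The main obstacle is the bookkeeping: one needs to verify that, across the at most six pivots and the nested invocations of cases~(c)~and~(b) that push down to~(a), the union of the enlarged forbidden sets and the union of bad $T$-edges to be avoided remain of total size $O(K)$, so that every degree-counting inequality still fits inside the slack provided by $\delta^{\pm}(D) \geq 81K$. Once this is checked, the remainder of the argument is an essentially mechanical transcription of the proof of Lemma~\ref{LinkerMainLemma}, yielding the desired paths $P, P'_1, \dots, P'_r$ entirely inside $D$.
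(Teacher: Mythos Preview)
Your plan is exactly the paper's approach; indeed, the paper omits the proof entirely, asserting only that it is identical to that of Lemma~\ref{LinkerMainLemma} with the single difference that degrees in $D$ can be up to $K$ smaller than in the witnessing tournament, which the increase of the minimum in/out-degree from $80K$ to $81K$ absorbs in every inequality.

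You actually go further than the paper by isolating the one step that is \emph{not} a degree inequality --- the single domination edge $xx_1$ (and $y_1y$) in case~(a) of the internal claim --- which the paper's one-line justification does not mention. Your instinct there is right, though one caution: the proposed fallback ``then the degree argument of case~(b) takes over'' relies on the bound $d^+(x)\ge 8|E^-_1|$, and that bound is only available when $x$ genuinely lies in $E^-_1$, not merely when its $D$-edge into $D^-_1$ happens to be among the missing ones. A cleaner repair is to enlarge the forbidden sets in the selection steps of cases~(b)--(d) by the set of vertices whose required domination edge into the relevant $(8,8,8)$-dominator lies in $T\setminus D$ (this set has size $O(K)$, bounded by the at most $K$ missing in-edges at each of the $16$ vertices of $A^2\cup A^3$), so that whenever case~(a) is invoked its inputs already carry the needed $D$-edges; after that, as you say, what remains is bookkeeping.
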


The above lemma has an identical proof to Lemma~\ref{LinkerMainLemma}. The only difference is that all vertices have slightly smaller degree, but given that all vertices have minimum in and out-degree $81K$, this is not significant in any of the inequalities in the proof of Lemma~\ref{LinkerMainLemma}. Given the similarity between Lemmas~\ref{LinkerMainLemma} and \ref{LinkerMainLemmaDigraph}, we omit the proof of Lemma~\ref{LinkerMainLemmaDigraph}.

Now we use Lemma~\ref{LinkerMainLemmaDigraph} to prove that a family of linkers is a linking family.
\begin{lemma}\label{LinkerLinkingFamily}
Let $t$ and $K$ be integers satisfying $K/5\geq t\geq 12$.
Let $D$ be a digraph with minimum degree at least $|D|-K/2$ and  minimum in and out-degrees at least $82K$.

Suppose that for $k$ satisfying $104tk\leq K/2$,  we have a family of vertex-disjoint $t$-linkers $L_1, \dots, L_k$ in $D$ with common exceptional set $X$ such that $|X|\leq K$.
Then $\{L_1, \dots, L_k\}$ is a linking family in $D$.
\end{lemma}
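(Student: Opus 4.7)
The plan is to prove this by induction on $k$. The base case $k=1$ is immediate from Lemma~\ref{LinkerMainLemmaDigraph} applied directly to $L_1$ in $D$: the $m\leq 100$ paths fit within the budget since $K\geq 208t\geq 2496$, the degree and exceptional-set hypotheses transfer (as $|D|-K/2\geq |D|-K$ and $82K\geq 81K$), and condition (iv) of Definition~\ref{LinkingFamily} is vacuous for $k=1$.

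For the inductive step, given a family $\{L_1,\dots,L_k\}$ satisfying the hypotheses together with $x,y,P_1,\dots,P_m$ with $m\leq 100k$, I would apply Lemma~\ref{LinkerMainLemmaDigraph} to the single linker $L_k$ in $D$, using as its input path-family:
\begin{itemize}
\item the given paths $P_1,\dots,P_m$;
\item each of the $5t$ paths $Q_i$ of every $L_j$ with $j<k$; and
\item one-vertex paths $\{v\}$ for each essential vertex $v$ of each $L_j$ with $j<k$.
\end{itemize}
These are pairwise vertex disjoint (by (L1) and linker-disjointness) and lie outside $V(L_k)\cup\{x,y\}$. Since each $L_j$ contributes at most $t(32+32+40)=104t$ essential vertices and $5t$ paths, the total input count is $r\leq m+109t(k-1)\leq 100k+109tk$, and the hypotheses $104tk\leq K/2$ (so $K\geq 208tk$) and $t\geq 12$ yield $r\leq K$ as required.

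The output supplies a path $P$ from $x$ to $y$, replacements $P'_j$ of each $P_j$ with the same endpoints, and new paths $Q'_i$ with the same endpoints as the $Q_i$. By the remark following Definition~\ref{LinkingFamily}, each one-vertex path satisfies $\{v\}'=\{v\}$, so the essential vertices of each $L_j$ for $j<k$ are preserved and, by disjointness of the output, cannot appear in $P$, in any $P'_l$, or on any $Q'_i$ belonging to a different linker. I then define $L'_j$ (for $j<k$) to be the subdigraph of $D$ obtained from $L_j$ by keeping its dominators, connectors, and all edges mandated by (L5) intact, while replacing each $Q_i$ by the corresponding $Q'_i$. Because $Q'_i$ and $Q_i$ share endpoints, the edges required by (L5) are the same as in $L_j$ and hence lie in $D$; the common exceptional set $X$ still works (dominators are unchanged); and (L1)--(L4) are inherited. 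So $L'_j$ is a valid $t$-linker in $D$ with exceptional set $X$.

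The pairwise vertex-disjointness of $P,P'_1,\dots,P'_m,L'_1,\dots,L'_{k-1}$ now follows from the vertex-disjointness of the output paths, with the one-vertex protection ruling out any overlap between $P$ or the $P'_l$ and the essential parts of the $L'_j$. Condition (ii) of Definition~\ref{LinkingFamily} follows directly from the union-preservation (up to $6$ extras) built into Lemma~\ref{LinkerMainLemmaDigraph}. Finally, $\{L'_1,\dots,L'_{k-1}\}$ has common exceptional set $X$ and satisfies $104t(k-1)\leq 104tk\leq K/2$, so by the inductive hypothesis it is a linking family of size $k-1$ in $D$, completing the induction. The only delicate point is the path-count budget $r\leq K$: the constant $104$ in the hypothesis $104tk\leq K/2$ is picked precisely so that the $109t(k-1)$ protection paths and the $100k$ given paths together fit within $K$.
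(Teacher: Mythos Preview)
Your proof is correct and follows essentially the same inductive strategy as the paper: apply Lemma~\ref{LinkerMainLemmaDigraph} to one linker while protecting the $Q$-paths of the remaining linkers, then reassemble the new linkers $L'_j$ from the output paths and invoke induction. The one implementation difference is that the paper, instead of feeding the essential vertices of $L_1,\dots,L_{k-1}$ in as one-vertex paths, simply deletes them from $D$ to form a smaller digraph $D'$ and applies Lemma~\ref{LinkerMainLemmaDigraph} there (this is legitimate because, by (L2) and the common exceptional set, those essential vertices lie in $X\setminus V(L_k)$, so $L_k$ remains a $t$-linker in $D'$). Your route is equally valid and arguably more self-contained; the paper's route needs only $5t(k-1)$ extra paths rather than $109t(k-1)$, which makes the arithmetic check $r\leq K$ a bit looser, but both fit comfortably under the hypothesis $104tk\leq K/2$.
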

\begin{proof}
The proof is by induction on $k$. Suppose that the statement is false. Let $k_0$ be the minimal value of $k$ for which it is false.

Let $L_1, \dots, L_{k_0}$ be a family of $k_0$ vertex-disjoint $t$-linkers with common exceptional set $X$ as in the lemma. Let $\mathcal{Q}_1, \dots, \mathcal Q_{k_0}$ be the families of paths of these linkers. Let $x,y$ be two vertices, $m\leq 100k_0$, and  $P_1, \dots, P_m$ paths as in the definition of ``linking family''.

Let $D'$ be $D$ with the essential vertices of $L_1, \dots, L_{k_0-1}$ removed. 
Notice that $D'$ has minimum degree at least $|D|-K/2-104t k_0\geq |D|-K$ and minimum in and out-degrees at least $82K-104t k_0\geq 81K$. Also notice that the total number of paths in $\{P_1, \dots, P_m\}\cup {Q}_i\cup \dots \cup Q_{k_{0}-1}$ is at most $100k_0+5t(k_0-1)\leq K$.
Therefore we can apply Lemma~\ref{LinkerMainLemmaDigraph} in $D'$ with the linker $L_{k_0}$, vertices $x$ and $y$, and paths $P_1, \dots, P_m$, plus all the paths in $\mathcal{Q}_1, \dots, \mathcal Q_{k_0-1}$. This gives us an $x$ -- $y$ path $P$, and new paths $P'_1, \dots, P'_m$, and families of paths $\mathcal{Q}'_1, \dots, \mathcal Q'_{k_0}$. Since for each $i$, the paths in $\mathcal{Q}'_i$ have the same endpoints as those in $\mathcal{Q}_i$  we can define a new $t$-linker $L'_i$ formed by replacing the paths in $L_i$ with those in $\mathcal{Q}'_i$.

Now we claim that the paths $P$, $P'_1, \dots, P'_m$, and digraphs $L'_1, \dots, L'_{k_0}$ satisfy (i) -- (iv) in the definition of ``linking family''. Conditions (i) -- (iii) are immediate from our application of Lemma~\ref{LinkerMainLemmaDigraph}. If $k_0>1$, then (iv) holds  by minimality of $k_0$, and if $k_0=1$ then (iv)  holds vacuously. This shows that  $\{L_1, \dots, L_{k_0}\}$ is a linking family in $D$. Since we made no assumptions on $\{L_1, \dots, L_{k_0}\}$ and $D$ other than those in the lemma, this contradicts our assumption that the lemma was false for $k=k_0$.
\end{proof}

\subsection{Proof of Theorem~\ref{LinkageStructures}}
Putting together Lemmas~\ref{LinkerExistence} and~\ref{LinkerMainLemma} it is easy to prove Theorem~\ref{LinkageStructures}.
\begin{proof}[Proof of Theorem~\ref{LinkageStructures}]
 Let $C_0=C_0(12)$ be the constant from Lemma~\ref{LinkerExistence}. Let $\Delta_1$ be the maximum degree of a $12$-linker. Set $C_1=8300\Delta_1C_0$.

 Let $T$ be any $C_1k$ connected tournament. By Lemma~\ref{LinkerExistence}, we can find $k$ vertex-disjoint $12$-linkers $L_1, \dots, L_k$ in $T$ with a common exceptional
 set $X$ satisfying $|X|\leq C_0k$.

Let $K=100\Delta_1C_0k$.
Let $D$ be a subdigraph of $T$ satisfying $\delta(D)\geq |T|-50\Delta_1C_0 k=|T|-K/2$. 
Notice that since $T$ is $C_1$-connected, it must satisfy $\delta^-(T), \delta^+(T)\geq C_1k$ and so  $\delta^-(D), \delta^+(D)\geq (C_1-50\Delta_1C_0)k\geq 82K$. Now we can apply Lemma~\ref{LinkerLinkingFamily} in order to conclude that for any subfamily $\hyper{L}\subseteq \{L_1, \dots, L_k\}$ is a linking family in $D\cup \hyper L$.
\end{proof}


\section{Concluding remarks}
We close with some remarks and open problems.
\begin{itemize}
\item For clarity of presentation, we made no attempt to optimize the constant $C$ in Theorem~\ref{EdgeDisjointHamiltonianCycles}. In future work it might be interesting to investigate how small this constant can be made, or to see whether exact bounds on the connectivity can be obtained for small $k$. For $k=2$, Thomassen conjectured that every strongly $3$-connected tournament contains $2$ edge-disjoint Hamiltonian cycles~\cite{ThomassenHamiltonian}.

\item A tournament is $k$-linked if  for any two disjoint sets of vertices $\{x_1, \dots, x_k\}$ and $\{y_1, \dots, y_k\}$ there are vertex disjoint paths $P_1, \dots, P_k$ such that $P_i$ goes from $x_i$ to $y_{i}$. Recall that an important step of the proof of Theorem~\ref{HamiltonianTheoremKLOP} in~\cite{KLOP} is to first show that a highly connected tournament is highly linked, and then to proceed to construct linkage structures under the knowledge that the tournament is linked. In our proof of Theorem~\ref{EdgeDisjointHamiltonianCycles} we used only connectedness and not linkedness. 

Interestingly, Theorem~\ref{LinkageStructures} can be used to show every highly connected tournament is highly linked---specifically we can show that there is a constant $C$ such that every $Ck$-connected tournament is $k$-linked. Indeed letting $C=3C_1$, we have that every $Ck$-connected tournament contains a family of $3k$ digraphs $L_1, \dots, L_{3k}$ any subfamily of which form a linking family in $T$. Given sets of vertices $\{x_1, \dots, x_k\}$ and $\{y_1, \dots, y_k\}$ as in the definition of $k$-linkedness,  at least $k$ of the graphs $L_1, \dots, L_{3k}$ must be  disjoint from $\{x_1, \dots, x_k, y_1, \dots, y_k\}$. Without loss of generality, these are the graphs $L_1, \dots, L_{k}$. Now invoking the property of the linking family $L_1, \dots, L_{k}$ with the vertices $x_1$ and $y_1$, and paths $P_1=\{x_2\}$, $P_3=\{y_2\}$, $P_4=\{x_3\}$, $P_5=\{y_3\}$, $\dots,$ $P_{2k-3}=\{x_k\}$, $P_{2k-2}=\{y_k\}$, we obtain a path $P$ from $x_1$ to $y_1$ which is disjoint from $\{x_1, \dots, x_k, y_1, \dots, y_k\}$ as well as a new linking family $\{L'_1, \dots, L'_{k}\}$ of size $k-1$. Next, invoking the property of the linking family $L'_1, \dots, L'_{k}$ with the vertices $x_2$ and $y_2$, and paths $P, P_1=\{x_3\}$, $P_3=\{y_3\}$, $P_4=\{x_4\}$, $P_5=\{y_4\}$, $\dots,$ $P_{2k-5}=\{x_k\}$, $P_{2k-4}=\{y_k\}$, we obtain an $x_2$ to $y_2$ path $Q$, a disjoint $x_1$ to $y_1$ path $P'$ and a new linking family of size $k-2$. Continuing in this fashion produces disjoint $x_i$ to $y_i$ paths for all $i$.

The above argument shows that there is a constant $C$ such that every $Ck$-connected tournament is $k$-linked. This was a conjecture of K\"uhn, Lapinskas, Osthus, and Townend from~\cite{KLOP}.
In \cite{PokrovskiyLinking}, the author gave a  proof of this conjecture with the constant $C=452$.  The proof of this result also uses linkage structures, but is much shorter than the one that is obtained in this paper from Theorem~\ref{LinkageStructures}. In addition the constant ``$452$'' is better than the one that would be obtained from Theorem~\ref{LinkageStructures}.

\item
There are a several open problems in this area.
One is the following conjecture of K\"uhn, Osthus, and Townend.
\begin{conjecture}[K\"uhn, Osthus, and Townend, \cite{KOT}]
There is a constant $C$ such that the vertices of every strongly $Ctk$-connected tournament can be partitioned into $t$  strongly  $k$-connected subtournaments.
\end{conjecture}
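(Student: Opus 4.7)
The plan is to adapt the linker-based strategy developed above. Let $T$ be strongly $Ctk$-connected for a sufficiently large constant $C$. We first apply Theorem~\ref{LinkageStructures} with parameter $Ntk$ in place of $k$ (where $N$ is a constant with $N\geq 14$) to produce $Ntk$ vertex-disjoint $12$-linkers $\{L^i_j:1\leq i\leq t,\ 1\leq j\leq Nk\}$ in $T$ with a common exceptional set $X$ of size $O(tk)$. We designate the linkers $L^i_1,\dots,L^i_{Nk}$ as the ``skeleton'' of part $V_i$ and place all of their vertices into $V_i$; every remaining vertex of $T$ is then distributed independently and uniformly at random among $V_1,\dots,V_t$.

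Via the Lov\'asz Local Lemma, we would show that with positive probability the following holds for every $i\in[t]$: every vertex $v\in V_i$ has in- and out-degree at least $82K$ in $T[V_i]$, where $K=\Theta(k)$; and every indominator and outdominator inside every $L^i_j$ satisfies condition (D6) in $T[V_i]$ with some constant expansion parameter (possibly smaller than~$8$, which Lemma~\ref{DominatorRobustness} absorbs). The degree bound is routine Chernoff concentration given $\delta^\pm(T)\geq Ctk$. For the second property, note that after restriction both $d^+_{T[V_i]}(v)$ and the uncovered set $E^-\cap V_i$ concentrate around a $1/t$ fraction of their global values, so the ratio $d^+(v)/|E^-|\geq p$ is preserved up to a constant factor; when $|E^-|$ is too small for Chernoff to apply, property (D6) follows directly from the min-degree bound. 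Once these events occur each $L^i_j$ is a genuine $12$-linker in $T[V_i]$ with exceptional set $X\cap V_i$ of size $O(k)$, and Lemma~\ref{LinkerLinkingFamily} applied to $D=T[V_i]$ implies that $\{L^i_1,\dots,L^i_{Nk}\}$ is a linking family in $T[V_i]$.

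Finally, we would deduce strong $k$-connectivity of $T[V_i]$ from this linking family. Given any $S\subseteq V_i$ with $|S|\leq k-1$ and any $x,y\in V_i\setminus S$, the set $S\cup\{x,y\}$ meets at most $k+1$ of the $Nk$ linkers, so at least $(N-1)k-1\geq k$ linkers remain disjoint from $S\cup\{x,y\}$. This subfamily is still a linking family in $T[V_i]$, and invoking its defining property with $x$, $y$, and single-vertex protection paths on each vertex of $S$ produces an $x$--$y$ path contained in $T[V_i]\setminus S$, showing that $T[V_i]\setminus S$ is strongly connected. The hard part will be verifying property (D6) for small uncovered sets $E^-$ (where Chernoff concentration is uninformative), and simultaneously controlling the dependency graph of the LLL so that the final constant $C$ is truly independent of $|T|$. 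We expect that isolating the regime $|E^-|\geq\Omega(t)$ for Chernoff, and handling $|E^-|=O(t)$ directly via the minimum degree, will suffice.
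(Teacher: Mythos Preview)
This statement is presented in the paper as an \emph{open conjecture}, not a theorem; the paper offers no proof of it. It appears in Section~4 among the concluding remarks, explicitly listed as one of ``several open problems in this area.'' There is therefore no proof in the paper to compare your proposal against, and you should not be writing a proof environment for it.

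Since your sketch is an attempt at an open problem, let me point out one genuine obstacle beyond the ones you already flag. Your claim that ``each $L^i_j$ is a genuine $12$-linker in $T[V_i]$'' hides a difficulty with property~(L\ref{UncoveredBiggerLinker}) of Definition~\ref{LinkerDefinition}. A $12$-linker carries twelve indominators and twelve outdominators whose uncovered sets must satisfy either $|E^-_{12}|\geq |E^+_1|$ or $|E^+_{12}|\geq |E^-_1|$; this global comparison between the $E^-$ side and the $E^+$ side is exactly what drives part~(b) of the claim inside Lemma~\ref{LinkerMainLemma}. After random restriction to $V_i$ the sets $E^\pm_j\cap V_i$ concentrate around $|E^\pm_j|/t$, but when the two sides are of comparable size the strict inequality in~(L\ref{UncoveredBiggerLinker}) can flip, and no amount of relabelling within a linker repairs it. You would need either to enforce large gaps between the $E^-$ and $E^+$ sizes at construction time, or to re-prove Lemma~\ref{LinkerMainLemma} under a relaxed condition such as $|E^-_{12}|\geq c\,|E^+_1|$ for some fixed $c<1$, checking that the counting in part~(b) still closes. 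Neither route is addressed, and neither is automatic; together with the LLL dependency issue you already identify, this is why the conjecture remains open.
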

The existence of a function $f(t,k)$ for which every strongly $f(t,k)$-connected tournament can be partitioned into $t$  strongly  $k$-connected subtournaments was a conjecture of Thomassen. This conjecture was solved by K\"uhn, Osthus, and Townend using a version of Theorem~\ref{LinkageStructuresKOT}. The only $k$ for which a linear bound is known is $k=1$, where $f(t,1)=t$ was proved by Chen, Gould, and Li \cite{CGL}.

Another is a conjecture of Song \cite{Song}, which says that for any natural numbers $n_1, \dots, n_k$ satisfying $\sum_{i=1}^k n_i=n$, every sufficiently large $k$-connected tournament $T$ on $n$ vertices can be partitioned into cycles $C_1,\dots, C_k$ such that $|C_i|=n_i$. K\"uhn, Osthus, and Townend showed that this is true with the condition that ``$T$ is $k$-connected'' is replaced by ``$f(k)$-connected'' for a suitable function $f(k)$. As an intermediate step to Song's conjecture it would be interesting to show that $f(k)$ can be linear.
\begin{problem}
Show that there is a constant $M$, such that for any natural numbers $n_1, \dots, n_k$ satisfying $\sum_{i=1}^k n_i=n$, the vertices of every strongly $Mk$-connected tournament $T$ on $n$ vertices can be partitioned into cycles $C_1,\dots, C_k$ such that $|C_i|=n_i$.
\end{problem}

Finally, as a tool for studying the above conjectures it would be interesting to know how small the bound on the connectivity in Theorem~\ref{LinkageStructuresKOT} can be.
\end{itemize}

\bibliography{Linking}
\bibliographystyle{abbrv}
\end{document}